\setlist[enumerate,1]{label={\arabic*.}}
\newcommand{\Z}{\mathbb{Z}}
\newcommand{\R}{\mathbb{R}}
\newcommand{\N}{\mathbb{N}}
\newcommand{\Q}{\mathbb{Q}}
\newcommand{\C}{\mathbb{C}}
\newcommand{\paren}[1]{\ensuremath{\left( #1 \right)}}
\newcommand{\set}[1]{\ensuremath{\left\{ #1 \right\}}}
\newcommand{\innerprod}[1]{\ensuremath{\left< #1 \right>}}
\newcommand{\norm}[1]{\ensuremath{\left\| #1 \right\|}}
\newcommand{\abs}[1]{\ensuremath{\left| #1 \right|}}
\newcommand{\setdiv}{\,\middle|\,}
\newcommand{\e}[1]{e\paren{#1}}
\newcommand{\summod}[1]{\ensuremath{\,(\mathrm{mod}\,#1)}}
\newcommand{\Matrix}[1]{\begin{pmatrix}#1\end{pmatrix}}
\newcommand\revdots{\mathinner{\mkern1mu\raise1pt\vbox{\kern7pt\hbox{.}}\mkern2mu\raise4pt\hbox{.}\mkern2mu \raise7pt\hbox{.}\mkern1mu}}
\newcommand{\Max}[1]{\ensuremath{\max \set{#1}}}
\newcommand{\floor}[1]{\ensuremath{\left\lfloor #1 \right\rfloor}}
\newcommand{\piecewise}[1]{\left\{\begin{matrix}#1\end{matrix}\right.}
\newcommand{\If}{\mbox{if }}
\newcommand{\Otherwise}{\mbox{otherwise}}
\renewcommand{\Re}{{\mathop{\mathgroup\symoperators Re}}}
\newcommand{\sgn}{{\mathop{\mathgroup\symoperators \,sgn}}}
\newcommand{\wbar}[1]{\overline{#1}}
\newcommand{\wtilde}[1]{\widetilde{#1}}
\newcommand{\what}[1]{\widehat{#1}}
\newcommand{\wcheck}[1]{\widecheck{#1}} 
\newcommand{\trans}[1]{{#1}^T}
\theoremstyle{plain} 
\newtheorem{thm}{Theorem}
\newtheorem{cor}[thm]{Corollary}
\newtheorem{lem}[thm]{Lemma}
\newtheorem{prop}[thm]{Proposition}
\newtheorem{conj}[thm]{Conjecture}
\theoremstyle{remark}
\DeclareMathAlphabet{\mathcalligra}{T1}{calligra}{m}{n}
\newcommand{\WigDName}{\mathcal{D}}
\newcommand{\WigDMat}[1]{\WigDName^{#1}}
\newcommand{\WigDRow}[2]{\WigDMat{#1}_{#2}}
\newcommand{\WigD}[3]{\WigDMat{#1}_{#2,#3}}
\newcommand{\WigdName}{\mathcalligra{d}}
\newcommand{\Wigd}[3]{\WigdName^{#1}_{#2,#3}}
\newcommand{\JacobiP}[3]{P^{(#2,#3)}_{#1}}
\DeclareMathOperator{\Tr}{Tr}
\DeclareMathOperator{\diag}{diag}
\newcommand{\tildek}[1]{\wtilde{k}\paren{#1}}
\newcommand{\Dtildek}[2]{\mathcal{R}^{#1}\paren{#2}}
\newcommand{\vpmpm}[1]{v_{_{#1}}}
\newcommand{\chipmpm}[1]{\chi_{_{#1}}}
\newcommand{\tildeWigP}[2]{\wtilde{\mathcal{P}}^{#1}_{#2}}
\newcommand{\tildeWigPd}[1]{\wtilde{\mathcal{P}}^{#1}}
\newcommand{\WigP}[2]{\mathcal{P}^{#1}_{#2}}
\newcommand{\CG}[2]{\frak{A}^{#1}_{#2}}
\newcommand{\CGB}[2]{\frak{B}^{#1}_{#2}}
\DeclareMathOperator{\proj}{proj}
\DeclareMathOperator{\Span}{span}
\newcommand{\bv}{\frak{v}}
\newcommand{\bu}{\frak{u}}
\newcommand{\KLeft}[1]{K^\text{Left}_{#1}}
\title{Higher weight on $GL(3)$, II:\\The cusp forms.}
\author{Jack Buttcane}
\date{14 April 2018}
\address{Mathematics Department, 244 Mathematics Building, Buffalo, NY 14260, USA}
\email{buttcane@buffalo.edu}
\thanks{During the time of this research, the author was supported by NSF grant DMS-1601919.}
\begin{document}

\begin{abstract}
The purpose of this paper is to collect, extend, and make explicit the results of Gel$'$fand, Graev and Piatetski-Shapiro
and Miyazaki
for the $GL(3)$ cusp forms which are non-trivial on $SO(3,\R)$.
We give new descriptions of the spaces of cusp forms of minimal $K$-type and from the Fourier-Whittaker expansions of such forms give a complete and completely explicit spectral expansion for $L^2(SL(3,\Z)\backslash PSL(3,\R))$, accounting for multiplicities, in the style of Duke, Friedlander and Iwaniec's paper.
We do this at a level of uniformity suitable for Poincar\'e series which are not necessarily $K$-finite.
We directly compute the Jacquet integral for the Whittaker functions at the minimal $K$-type, improving Miyazaki's computation.
These results will form the basis of the non-spherical spectral Kuznetsov formulas and the arithmetic/geometric Kuznetsov formulas on $GL(3)$.
The primary tool will be the study of the differential operators coming from the Lie algebra on vector-valued cusp forms.
\end{abstract}

\subjclass[2010]{Primary 11F72; Secondary 11F30}

\maketitle

\section{Introduction}

In the previous paper \cite{PartI}, we worked out the continuous and residual spectra in the Langlands decomposition in the case of $L^2(SL(3,\Z)\backslash PSL(3,\R))$.
We turn now specifically to the cuspidal spectral decomposition.
The initial decomposition into eigenspaces of the Casimir operators is originally due, in much greater generality, to Gel$'$fand, Graev and Piatetski-Shapiro \cite{GGPS}, who described the decomposition in terms of integral operators (in the style of Selberg \cite{Sel01}) operating on representation spaces in the $L^2$-space.
In modern terminology, this is the study of irreducible unitary representations and $(\frak{g},K)$ modules, and in the particular case of $PSL(3,\R)$, the former was initiated by Vahutinski\u{\i} \cite{Vahu} and the latter by Howe \cite{Howe} and Miyazaki \cite{Miyagk}.

From the representation-theoretic description, this paper will analyze the structure of the cuspidal part of the principal series representations as they decompose over the entries of the Wigner $\WigDName$-matrices, the intertwining operators, and the Jacquet integral as an operator from the principal series representation to its Whittaker model.
From the analytic perspective, we are decomposing the eigenspaces of the Casimir operators on the $L^2$-space via raising and lowering operators (in the style of Maass \cite{Maass01}) obtained from the Lie algebra; we show the equivalence of several descriptions of the minimal $K$-types, and compute Mellin-Barnes integrals for the Whittaker functions attached to those $K$-types.
We will largely avoid the representation-theoretic description outside of the introduction, section \ref{sect:MainThms}, and appendix \ref{app:MultOne}; partial descriptions in that language maybe found in \cite[appendix A]{MillerSchmid} and \cite{MeMiller}.

The goal of this paper is to describe, as completely, explicitly and uniformly as possible, the spectral expansion of the cuspidal part of $L^2(SL(3,\Z)\backslash PSL(3,\R))$, and provide all of the associated information necessary for a number theorist to apply analysis on this space as well as at the minimal $K$-types.
We classify the spectral parameters of the cusp forms at the minimal $K$-types, and then, taking as input the spectral parameters and Fourier-Whittaker coefficients of such minimal cusp forms, generate the rest of the spectral expansion.
The classification of the minimal $K$-types is given in theorem \ref{thm:GDCusp}, and the spectral expansion is theorem \ref{thm:CuspSpectralExpand}.

The uniformity of theorem \ref{thm:CuspSpectralExpand} is necessary if one needs to expand, say, a Poincar\'e series which is not $K$-finite.
Since certain cusp forms (and Eisenstein series) miss a number of $K$-types (i.e. generalized principal series forms), one would typically expect such an expansion to require evaluating the Wigner coefficients of the $K$-part of the Poincar\'e series, a task which is at best difficult for any large class of test functions.
However, in theorem \ref{thm:CuspSpectralExpand}, the sum is always taken over \textit{all} $K$-types, relying on the fact that the Jacquet-Whittaker function for any given cusp form is simply zero on the types missed by that form.
It is not too hard to see that the Archimedian weight function for a generalized principal series form in such an expansion, viewed as a sum over minimal-weight forms, is just the analytic continuation of the weight function for a full principal series form, and this can be evaluated without ever mentioning Wigner $\WigDName$-matrices at all.
We anticipate using such expansions to study smooth sums of exponential sums on $GL(3)$.

On the other hand, the majority of the study of analytic number theory on automorphic forms takes place at the minimal $K$-types.
With the longer history of automorphic forms on $GL(2)$, it is perhaps easy to lose track of the fundamental importance of the connection between the three realizations of the Whittaker functions:
\begin{enumerate}
\item The Whittaker functions of holomorphic and Maass cusp forms and Eisenstein series, which arise through their Fourier coefficients.
	These are the main number-theoretic objects which occur in $L$-functions (see \cite[section 6.5]{Gold01}), various period integral formulas \cite{JPSS01,Watson01}, etc.
	
\item The classical Whittaker functions, which arise as the solutions to differential equations or through generalized hypergeometric series.
	These are the main analytic objects, and show up in a multitude of integral formulas (see the index entry for ``Whittaker functions'' in \cite{GradRyzh}), treatises on asymptotics \cite{Olver01}, etc.
	
\item The Jacquet-Whittaker functions, which arise via an integral operator between representation spaces.
	These are the main algebraic objects, which have numerous functional identities (for $GL(3)$, these are equations (3.4)-(3.8) of \cite{PartI}, to name a few), and are the focus of much study in representation theory (\cite{Shal01} is fundamental there).
\end{enumerate}
The bulk of this paper is concerned with making this connection very precise at the minimal $K$-types, from which the spectral expansion follows fairly easily via a double induction argument in section \ref{sect:GoingUp}.

We give expressions for the Whittaker functions at the minimal $K$-types in theorem \ref{thm:MinWhitt}; these are due, in greatly different language, to Miyazaki \cite{MiyaWhitt} who obtained them by studying their differential equations and then applied a certain induction argument, but we take this a step farther by evaluating the base case directly from the Jacquet integral, thus fixing the relationship between elements of the principal series representation and the Whittaker model, as well as the cusp forms, via the Fourier expansion.
This is significant for the theory of special functions on $GL(3)$ because the Mellin-Barnes integral is simpler analytically, but the Jacquet integral has a certain algebraic structure (i.e. the many identities of \cite[section 3.1]{PartI}) that we fundamentally rely on in constructions such as the Kuznetsov formulas (see \cite{SpectralKuz, WeylI, WeylII}, and note the extensive use of the properties of the Jacquet-Whittaker functions there).
Conversely, we must then rely on Miyazaki's solution of the differential equations (or representation theory) for the uniqueness property.

Such precise knowledge on the minimal-weight forms is, to the author, of primary use in developing spectral Kuznetsov/Petersson trace formulas and thereby Weyl laws and the many derived applications for studying such forms.
These will appear in future papers on the topic, e.g. \cite{WeylI, WeylII}.

A primary tenet of this paper, in combination with the previous part \cite{PartI}, is to be self-contained.
We generally succeed here except for three external pieces which are collected into theorem \ref{thm:ext}:
First, we do not prove the initial spectral decomposition; this is a standard proof using the techniques of Selberg studying integral operators first carried out by \cite{GGPS} and can be found in the first few pages of Harish-Chandra's book \cite{HC01}, or Langland's paper \cite{Langlands02}, or in a number of other books, e.g. \cite{OsWarn}, see also \cite[appendix A]{IwaIntro}.
Second, we quote the trivial bound for the principal series representations; this is a bound on Langlands parameters for the conjecturally non-existentent complementary series and can be found in \cite{Vahu}.
Lastly, we do not prove multiplicity one for Whittaker functions; from the analytic perspective, we are avoiding the solution to a lengthy problem in partial differential equations, but it follows in the representation-theoretic language from Shalika's paper \cite{Shal01}.
The details of these connections may be found in the discussion following theorem \ref{thm:ext}.

The proofs below will generally assume that the quotient is by $\Gamma=SL(3,\Z)$, but of course only the Fourier expansion will see the particular discrete group in use.

\section{Some notation and background from Part I}
We recall the notation of Part I of this series of papers \cite{PartI}.
Throughout the current paper, section, equation and theorem numbers beginning with an I reference \cite{PartI}, so for example theorem I.1.1 references \cite[theorem 1.1]{PartI} and (I.2.3) references \cite[eq. (2.3)]{PartI}.

Let $G=PSL(3,\R) = GL(3,\R)/\R^\times$ and $\Gamma=SL(3,\Z)$.
The Iwasawa decomposition of $G$ is $G=U(\R) Y^+ K$ using the groups $K=SO(3,\R)$,
\[ U(R) = \set{\Matrix{1&x_2&x_3\\&1&x_1\\&&1} \setdiv x_i\in R}, \qquad R \in \set{\R,\Q,\Z}, \]
\[ Y^+ = \set{\diag\set{y_1 y_2,y_1,1} \setdiv y_1,y_2 > 0}. \]
The measure on the space $U(\R)$ is simply $dx := dx_1 \, dx_2 \, dx_3$, and the measure on $Y^+$ is
\[ dy := \frac{dy_1 \, dy_2}{(y_1 y_2)^3}, \]
so that the measure on $G$ is $dg :=dx \, dy \, dk$, where $dk$ is the Haar probability measure on $K$ (see section I.2.2.1).
We generally identify elements of quotient spaces with their coset representatives, and in particular, we view $U(\R)$, $Y^+$, $K$ and $\Gamma$ as subsets of $G$.  (Note that these subspaces of $GL(3,\R)$ do inject into the quotient $G$, which is not generally the case for equivalent subspaces of $GL(2n,\R)$, since $-I \in SL(2n,\R)$.)

Characters of $U(\R)$ are given by
\[ \psi_m(x) = \psi_{m_1,m_2}(x) = \e{m_1 x_1+m_2 x_2}, \qquad \e{t} = e^{2\pi i t}, \]
where $m\in\R^2$.
Characters of $Y^+$ are given by the power function on $3\times 3$ diagonal matrices, defined by
\[ p_\mu\paren{\diag\set{a_1,a_2,a_3}} = \abs{a_1}^{\mu_1} \abs{a_2}^{\mu_2} \abs{a_3}^{\mu_3}, \]
where $\mu\in\C^3$.
We assume $\mu_1+\mu_2+\mu_3=0$ so this is defined modulo $\R^\times$, renormalize by $\rho=(1,0,-1)$, and extend by the Iwasawa decomposition
\[ p_{\rho+\mu}\paren{r x y k} = y_1^{1-\mu_3} y_2^{1+\mu_1}, \qquad r\in\R^\times,x\in U(\R),y\in Y^+,k\in K. \]

The Weyl group $W$ of $G$ contains the six matrices
\begin{equation*}
	\begin{array}{rclcrclcrcl}
		I &=& \Matrix{1\\&1\\&&1}, && w_2 &=& -\Matrix{&1\\1\\&&1}, && w_3 &=& -\Matrix{1\\&&1\\&1}, \\
		w_4 &=& \Matrix{&1\\&&1\\1}, && w_5 &=& \Matrix{&&1\\1\\&1}, && w_l &=& -\Matrix{&&1\\&1\\1}.
	\end{array}
\end{equation*}
The group of diagonal, orthogonal matrices $V \subset G$ contains the four matrices $v_{\varepsilon_1,\varepsilon_2}=\diag\set{\varepsilon_1,\varepsilon_1 \varepsilon_2,\varepsilon_2}, \varepsilon\in\set{\pm 1}^2$, which we abbreviate $V = \set{\vpmpm{++}, \vpmpm{+-}, \vpmpm{-+}, \vpmpm{--}}$.
The Weyl group induces an action on the coordinates of $\mu$ by $p_{\mu^w}(a):=p_\mu(waw^{-1})$, and we denote the coordinates of the permuted parameters by $\mu^w_i := (\mu^w)_i$, $i=1,2,3$.

Part I made explicit the continuous and residual parts of the Langlands spectral expansion, and it remains to do this for $L^2_\text{cusp}(\Gamma\backslash G)$, the space of square-integrable functions $f:\Gamma\backslash G\to\C$ satisfying the cuspidality condition described below theorem I.1.1, or equivalently, whose degenerate Fourier coefficients are all zero:
\begin{align}
\label{eq:CuspCond}
	\int_{U(\Z)\backslash U(\R)} f(ug) \psi_n(u) du = 0 \qquad \text{whenever $n_1 n_2=0$, $n \in \Z^2$.}
\end{align}
We will describe such functions in terms of their Fourier expansion (see section I.3.6) and their decomposition over the Wigner $\WigDName$-matrices.

If we describe elements $k=k(\alpha,\beta,\gamma)\in K$ in terms of the $Z$-$Y$-$Z$ Euler angles
\begin{align}
\label{eq:kabcDef}
	k(\alpha,\beta,\gamma) := k(\alpha,0,0) \, w_3 \, k(-\beta,0,0) \, w_3 \, k(\gamma,0,0), \quad k(\theta,0,0) := \Matrix{\cos\theta&-\sin\theta&0\\ \sin\theta&\cos\theta&0\\ 0&0&1},
\end{align}
then the Wigner $\WigDName$-matrix $\WigDMat{d}$ is the $(2d+1)$-dimensional representation of $K$ primarily characterized by
\begin{align}
\label{eq:WigDPrimary}
	\WigDMat{d}(k(\theta,0,0)) = \Dtildek{d}{e^{i\theta}}, \qquad \Dtildek{d}{s} := \diag\set{s^d,\ldots,s^{-d}}, s\in\C.
\end{align}
The entries of the matrix-valued function $\WigDMat{d}$ are indexed from the center:
\[ \WigDMat{d} = \Matrix{\WigD{d}{-d}{-d}&\ldots&\WigD{d}{-d}{d}\\ \vdots&\ddots&\vdots\\ \WigD{d}{d}{-d}&\ldots&\WigD{d}{d}{d}}, \]
so in particular $\WigD{d}{m'}{m}(k(\theta,0,0)) = e^{-im'\theta}$ when $m'=m$ and zero otherwise, as the indices $m'$ and $m$ run through the integers $-d,\ldots,d$.
Similarly, we index the rows $\WigDRow{d}{m'}=\paren{\WigD{d}{m'}{-d},\ldots,\WigD{d}{m'}{d}}$ and columns $\WigD{d}{\cdot}{m} = \trans{\paren{\WigD{d}{-d}{m},\ldots,\WigD{d}{d}{m}}}$ from the central entry, as well.
The entries, rows, and columns of of the derived matrix- and vector-valued functions (e.g. the Whittaker function \eqref{eq:JacWhittDef}) will be indexed similarly.
The Wigner $\WigDName$-matrices exhaust the equivalence classes of unitary, irreducible representations of the compact group $K$, hence they give a basis of $L^2(K)$, as in section I.2.2.1, by the Peter-Weyl theorem.

The entries of the matrix $\WigDMat{d}(k(0,\beta,0)) = \WigDMat{d}(w_3) \WigDMat{d}(k(-\beta,0,0)) \WigDMat{d}(w_3)$ are known as the Wigner $\WigdName$-polynomials.
For the most part, we will avoid the Wigner $\WigdName$-polynomials by treating $\WigDMat{d}(w_3)$ as a black box; that is, as some generic orthogonal matrix.
The notable exceptions are in section \ref{sect:WignerdJacobiP} and proposition \ref{prop:GDWhittFEs}, and we frequently use the facts (see section I.2.2.2)
\begin{align}
\label{eq:WigDv}
	\WigDMat{d}(v_{\varepsilon,+1}) =& \diag\set{\varepsilon^d,\ldots,\varepsilon^{-d}}, & \WigD{d}{m'}{m}(v_{\varepsilon,-1}) =& (-1)^d\varepsilon^{m'} \delta_{m'=-m}.
\end{align}
The notation $\delta_P$ here is one if the predicate $P$ is true, and zero if it is false.

A complete list of the characters of $V$, is given by $\chi_{\varepsilon_1,\varepsilon_2}, \varepsilon\in\set{\pm 1}^2$ which act on the generators by
\begin{align}
\label{eq:chipmpmdef}
	\chi_{\varepsilon_1,\varepsilon_2}(\vpmpm{-+})=&\varepsilon_1 & \chi_{\varepsilon_1,\varepsilon_2}(\vpmpm{+-}) =&\varepsilon_2.
\end{align}
These give rise to the projection operators
\begin{align}
\label{eq:SigmadchiDef}
	\Sigma^d_\chi = \frac{1}{4} \sum_{v\in V} \chi(v) \WigDMat{d}(v),
\end{align}
which are written out explicitly in section I.2.2.2 using the description \eqref{eq:WigDv}.
We sometimes abbreviate $\Sigma^d_{\varepsilon_1,\varepsilon_2} = \Sigma^d_{\chi_{\varepsilon_1,\varepsilon_2}}$.

Throughout the paper, we take the term ``smooth'', in reference to some function, to mean infinitely differentiable on the domain.
The letters $x,y,k,g, v$ and $w$ will generally refer to elements of $U(\R)$, $Y^+$, $K$, $G$, $V$, and $W$, respectively.
The letters $\chi$ and $\psi$ will generally refer to characters of $V$ and $U(\R)$, respectively, and $\mu$ will always refer to an element of $\C^3$ satisfying $\mu_1+\mu_2+\mu_3=0$.
Vectors (resp. matrices) not directly associated with the Wigner $\WigDName$-matrices, e.g. elements $n \in \Z^2$ are indexed in the traditional manner from the left-most entry (resp. the top-left entry), e.g.  $n=(n_1,n_2)$; in case it becomes necessary to make it explicit, we will refer to this as ``standard indexing''.

There is a technical point in relation to differentiability on quotient spaces:
We denote by $C^\infty(\Gamma\backslash G)$ the space of infinitely differentiable functions on $\Gamma\backslash G$, and one may define the differentiability on either of the smooth manifolds $\Gamma\backslash G$ or $G$ itself (i.e. $C^\infty(\Gamma\backslash G)$ as the space of infinitely differentiable functions of $G$ which are left-invariant by $\Gamma$) or conceivably by restricting to just the left-translation invariant differential operators, i.e. those coming from the Lie algebra of $G$ (see section \ref{sect:LieAlgebras}).
In the present case, it turns out the space of functions does not depend on these choices, and this is a discussion best left to a text on smooth manifolds (see \cite[theorem 9.16]{LeeManifolds}).

The majority of the paper will be concerned with the matrix-valued Jacquet-Whittaker function at each $K$-type $\WigDMat{d}$:
\begin{align}
\label{eq:JacWhittDef}
	W^d(g,\mu,\psi) := \int_{U(\R)} I^d(w_l u g, \mu) \wbar{\psi(u)} du, \qquad I^d(xyk,\mu) := p_{\rho+\mu}(y) \WigDMat{d}(k),
\end{align}
whose functional equations in $\mu$ (proposition I.3.3),
\begin{align}
\label{eq:WhittFEs}
	W^d(g,\mu,\psi_{1,1}) = T^d(w,\mu) W^d(g,\mu^w,\psi_{1,1}), \qquad w \in W,
\end{align}
are generated by the matrices
\begin{align}
\label{eq:Tdw2}
	T^d(w_2,\mu) :=&\pi^{\mu_1-\mu_2} \Gamma^d_\mathcal{W}(\mu_2-\mu_1,+1), \\
\label{eq:Tdw3}
	T^d(w_3,\mu) :=& \pi^{\mu_2-\mu_3} \WigDMat{d}(\vpmpm{--}w_l) \Gamma^d_\mathcal{W}(\mu_3-\mu_2,+1) \WigDMat{d}(w_l\vpmpm{--}),
\end{align}
and $\Gamma^d_\mathcal{W}(u,\varepsilon)$ is a diagonal matrix coming from the functional equation of the classical Whittaker function (I.2.20):
If $\mathcal{W}^d(y,u)$ is the diagonal matrix-valued function with entries (see section I.2.3.1)
\begin{equation}
\label{eq:classWhittDef}
\begin{aligned}
	\mathcal{W}^d_{m,m}(y,u) =& \int_{-\infty}^\infty \paren{1+x^2}^{-\frac{1+u}{2}} \paren{\frac{1+ix}{\sqrt{1+x^2}}}^{-m} \e{-yx} dx \\
	=& \piecewise{\displaystyle \frac{(\pi\abs{y})^{\frac{1+u}{2}}}{\abs{y} \Gamma\paren{\frac{1-\varepsilon m+u}{2}}} W_{-\frac{\varepsilon m}{2}, \frac{u}{2}}(4\pi\abs{y}) & \If y \ne 0, \\[5pt]
\displaystyle \frac{2^{1-u}\pi \,\Gamma(u)}{\Gamma\paren{\frac{1+u+m}{2}} \Gamma\paren{\frac{1+u-m}{2}}} & \If y=0,}
\end{aligned}
\end{equation}
(where $W_{\alpha,\beta}(y)$ is the classical Whittaker function), then for $y\ne 0$, we have the functional equations
\begin{align}
\label{eq:WhittGammas}
	\mathcal{W}^d(y,-u) =& (\pi\abs{y})^{-u} \Gamma^d_\mathcal{W}(u,\sgn(y)) \mathcal{W}^d(y,u) & \Gamma_{\mathcal{W},m,m}^d(u,\varepsilon) =& \frac{\Gamma\paren{\frac{1-\varepsilon m+u}{2}}}{\Gamma\paren{\frac{1-\varepsilon m-u}{2}}}.
\end{align}

The function $W^d(g,\mu,\psi)$, as an integral of a Wigner $\WigDName$-matrix, is again matrix-valued, and we index its rows $W^d_{m'}$, columns $W^d_{\cdot,m}$, and entries $W^d_{m',m}$ from the central entry, i.e. by the same convention as the Wigner $\WigDName$-matrices.
The matrices $T^d(w,\mu)$ are indexed by the central entry as well, and satisfy the composition
\begin{align}
\label{eq:TdCompose}
	T^d(w w',\mu) =& T^d(w,\mu)T^d(w',\mu^w).
\end{align}

\section{The main results}
\label{sect:MainThms}
Let $\mathcal{A}$ be the space of smooth, scalar-valued cusp forms, equipped with the usual $L^2$ inner product; that is, the functions $f\in L^2(\Gamma\backslash G)$ which are both infinitely differentiable and satisfy the cuspidality condition \eqref{eq:CuspCond}.
We further impose on elements $f\in\mathcal{A}$ the technical requirement that
\begin{align}
\label{eq:AbddCond}
	\exists r>0 \text{ such that }\forall X\in\frak{g}_\C \text{ we have }\sup_{g\in G} \abs{(Xf)(g)} \norm{g}^{-r} < \infty.
\end{align}
Here $\frak{g}_\C$ is the complexified Lie algebra of $G$ (see section \ref{sect:LieAlgebras}), and $\norm{g}^2 = \sum_{j,k} \abs{g_{j,k}}^2$ is the Euclidean norm resulting from the natural inclusion $G \subset \R^9$.

Now let $\mathcal{A}^d$, $d \ge 0$ be the space of smooth, vector-valued cusp forms with $K$-type $\WigDMat{d}$; that is, the functions $f:\Gamma\backslash G\to\C^{2d+1}$ taking values in the complex $(2d+1)$-dimensional row vectors, satisfying $f(gk) = f(g)\WigDMat{d}(k)$ and having finite norm under the natural inner product
\begin{align}
\label{eq:VecCuspInnerProd}
	\innerprod{f_1,f_2} = \int_{\Gamma\backslash G} f_1(g) \wbar{\trans{f_2(g)}} dg = \int_{\Gamma\backslash G/K} f_1(z) \wbar{\trans{f_2(z)}} dz.
\end{align}
We again impose on elements of $\mathcal{A}^d$ the moderate growth condition \eqref{eq:AbddCond}.

In section \ref{sect:KProjOps}, we describe the decomposition of $\mathcal{A}$ into the spaces $\mathcal{A}^d$ by the projection operators on $K$.
As discussed in the introduction, we will use three external results about these cusp forms.
\begin{thm}
\label{thm:ext} \ 

	\begin{enumerate}[ref={\thelem.\arabic*}]
		\item \label{thm:SpectralDecomp}
			The space of scalar-valued cusp forms decomposes into a direct sum of simultaneous eigenspaces $\mathcal{A}_\mu$ of the Casimir differential operators $\Delta_1$ and $\Delta_2$ (see section \ref{sect:Casimir}).
			We parameterize the eigenspaces by the eigenvalues of the corresponding power function
			\[ \Delta_i p_{\rho+\mu} = \lambda_i(\mu) p_{\rho+\mu}, \qquad \lambda_1(\mu) = 1-\tfrac{\mu_1^2+\mu_2^2+\mu_3^2}{2}, \qquad \lambda_2(\mu) = \mu_1 \mu_2 \mu_3, \]
			i.e. functions $f \in \mathcal{A}_\mu$ satisfy $\Delta_i f = \lambda_i(\mu) f$.
			Again, the eigenspaces $\mathcal{A}_\mu$ further decompose into $\mathcal{A}^d_\mu$.
			The space $\mathcal{A}^d_\mu$ is finite-dimensional.

		\item \label{thm:Unrammu}
			If $\mathcal{A}^d_\mu \ne \set{0}$ with $d\in\set{0,1}$ and $\mu$ of the form $(x+it,-x+it,-2it)$, $x,t\in\R$, then $\abs{x} < \frac{1}{2}$.
			
		\item \label{thm:WhittMultOne}
			The $n$-th Fourier coefficient of $\phi \in \mathcal{A}_\mu$ lies in the image of the Jacquet integrals.
			That is, for any $\phi \in \mathcal{A}_\mu$ which lies in an irreducible component of the right-regular representation of $G$, there exists some $f \in C^\infty(K)$ and some coefficients $c_n \in \C$, $n\in\Z^2$, so that for any $n \in \Z^2$,
			\[ \int_{U(\Z)\backslash U(\R)} \phi(ug) \wbar{\psi_n(u)} du = c_n \int_{U(\R)} I_{\mu, f}(w_l u g) \wbar{\psi_n(u)} du, \]
			with $I_{\mu, f}(xyk)=p_{\rho+\mu}(y) f(k)$.
			The integral on the right converges absolutely on $\Re(\mu_1) > \Re(\mu_2) > \Re(\mu_3)$, and must be interpretted by analytic continuation when one or more $\Re(\mu_i)=\Re(\mu_j)$, $i \ne j$.
	\end{enumerate}
\end{thm}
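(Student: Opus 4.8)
The plan is to recast the $n$-th Fourier coefficient as a Whittaker functional on the irreducible representation generated by $\phi$, and then identify it with the Jacquet integral via uniqueness of Whittaker models. Write $\pi$ for the irreducible component of the right-regular representation containing $\phi$, and for $\varphi\in\pi$ set
\[ \Lambda_n(\varphi) := \int_{U(\Z)\backslash U(\R)} \varphi(u)\,\wbar{\psi_n(u)}\,du. \]
The substitution $u\mapsto u_0 u$ gives $\Lambda_n(\pi(u_0)\varphi)=\psi_n(u_0)\Lambda_n(\varphi)$ for $u_0\in U(\R)$, so $\Lambda_n$ is a $\psi_n$-Whittaker functional on $\pi$, and the left-hand side of the claimed identity is exactly $g\mapsto\Lambda_n(\pi(g)\phi)$.

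The crux is uniqueness: for each nondegenerate character (that is, $n_1 n_2\ne 0$) the space of $\psi_n$-Whittaker functionals on the irreducible admissible $(\frak{g},K)$-module underlying $\pi$ is at most one-dimensional. Granting this, I would realize $\pi$ as a subquotient of the principal series whose sections are the $I_{\mu,f}$, $f\in C^\infty(K)$. The Jacquet integral defines a nonzero $\psi_n$-Whittaker functional there, sending a section $I_{\mu,f}$ to $\int_{U(\R)} I_{\mu,f}(w_l u)\wbar{\psi_n(u)}\,du$, so its restriction to $\pi$ agrees with $\Lambda_n$ up to a scalar. Under the realization the fixed vector $\phi$ corresponds to a single section $I_{\mu,f}$; hence $f$ is determined by $\phi$ alone and is independent of $n$, while the scalar relating $\Lambda_n$ to the Jacquet functional is the arithmetic Fourier coefficient $c_n$. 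This is precisely the separation of variables asserted by the identity.

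For the degenerate characters $n_1 n_2=0$ the left-hand side vanishes by the cuspidality condition \eqref{eq:CuspCond}, and one sets $c_n=0$. Absolute convergence of the Jacquet integral on the open cone $\Re(\mu_1)>\Re(\mu_2)>\Re(\mu_3)$ follows by dominating $\abs{I_{\mu,f}(w_l u g)}$ by $\norm{f}_\infty$ times $p_{\rho+\Re(\mu)}$ of the $Y^+$-part of $w_l u g$ and checking integrability along $U(\R)$ in this range; continuation across the walls $\Re(\mu_i)=\Re(\mu_j)$ is the standard meromorphic continuation of the Jacquet integral, reflected in the functional equations \eqref{eq:WhittFEs}.

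The one genuinely hard input is the uniqueness of Whittaker models used above. Representation-theoretically this is Shalika's theorem; analytically it is a uniqueness statement for the system $\Delta_i W=\lambda_i(\mu)W$ subject to $\psi_n$-equivariance, the prescribed $K$-type, and moderate growth. The difficulty is real, since the full solution space of that system has dimension $\abs{W}$---heuristically one solution per Weyl chamber, the comparison between branches being governed by the same Gamma-factor data as in \eqref{eq:WhittGammas} and \eqref{eq:WhittFEs}---and it is only the growth condition that selects the Jacquet-Whittaker solution from among them. This is exactly why the result is quoted rather than reproved.
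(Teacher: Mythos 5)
Your treatment of part 3 is essentially the paper's own (appendix \ref{app:MultOne}): both arguments embed the irreducible representation generated by $\phi$ into a principal series, observe that the Fourier-coefficient map and the Jacquet integral are both $\psi_n$-Whittaker functionals commuting with the $\frak{g}_\C$-action, and invoke Shalika's uniqueness to conclude they agree up to the constant $c_n$, with $f$ the restriction to $K$ of the image of $\phi$ under the embedding; like the paper, you quote rather than reprove the uniqueness. Two refinements the paper makes that you should adopt: it uses the subrepresentation theorem \cite[theorem 8.37]{Knapp} rather than a mere subquotient (a subquotient would not hand you an actual section $I_{\mu,f}$ corresponding to $\phi$), and it equips the smooth vectors with a Fr\'echet topology generated by the Lie-algebra seminorms, since Shalika's uniqueness is for \emph{continuous} functionals and both $\mathcal{F}_n$ and $\mathcal{J}_n$ must be checked continuous in that topology. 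Parts 1 and 2 of the theorem are not addressed in your proposal, but the paper likewise only cites them (Harish-Chandra, and the unitary dual estimate of Miller--Schmid, respectively), so no real discrepancy arises there.
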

Part 1 is theorem 3 and lemma 18 in \cite{HC01}.
Part 2 is the unitary dual estimate noted in \cite[appendix A.1]{MillerSchmid}; of course, stronger estimates are known for $d=0$, e.g \cite[appendix 2]{KS01}, and the $d=1$ case follows by Rankin-Selberg theory (as \cite{JacShal02} does for the unramified case), and this computation is given in \cite{ArithKuzI}.
Part 3 is a well-known reformulation of Shalika's multiplicity one theorem \cite[theorem 3.1]{Shal01}; it is the combination of several deep results of representation theory, and requires a representation-theoretic proof; we give the details in appendix \ref{app:MultOne}.
In particular, the meaning of the Jacquet integral on the right-hand side in case some $\Re(\mu_i)=\Re(\mu_j)$, $i \ne j$ is made explicit there.
The coefficients $c_n$ are called the Fourier-Whittaker coefficients, though in practice they are usually scaled by a factor $\abs{n_1 n_2}$, as in (I.3.31), and the Whittaker function used is instead the completed Whittaker function at the minimal $K$-type, as in theorem \ref{thm:MinWhitt}, below.

The assumption that $\phi$ lies in an irreducible component of the right-regular representation of $G$ is not restrictive, and this, as well as the meaning of the statement, is explained in appendix \ref{app:MultOne}.

\subsection{The minimal-weight forms}
In section \ref{sect:Xjaction}, we will show the Lie algebra of $G$ gives rise to five differential-vector operators $Y^a:\mathcal{A}^d\to\mathcal{A}^{d+a}$, $a=-2,-1,0,1,2$ (see \eqref{eq:vecYDef} and \eqref{eq:vecYLeftTrans}) which are left-translation invariant.
We describe the cusp forms by their behavior under the action of these operators.
In section \ref{sect:MinKProof}, we will show
\begin{prop}
\label{prop:EquivMinimalKTypes}
	Define the differential operator
	\begin{align}
	\label{eq:LambdaXDef}
		\Lambda_x =& 27\Delta_2^2 + 4\paren{\Delta_1+x^2-1}\paren{\Delta_1+4x^2-1}^2, \qquad x \in \R.
	\end{align}
	Then for $\phi\in\mathcal{A}^d$, the following are equivalent:
	\begin{enumerate}
		\item $\phi$ is orthogonal to $Y^1 \mathcal{A}^{d-1}$, $Y^2 \mathcal{A}^{d-2}$ and $Y^0 Y^2 \mathcal{A}^{d-2}$.
		\item $\phi$ is a zero of $Y^{-1}$, $Y^{-2}$ and an eigenfunction of $Y^0$.
		\item $d=0$, $d=1$ or $\phi$ is a zero of the operator $\Lambda_{\frac{d-1}{2}}$.
	\end{enumerate}
\end{prop}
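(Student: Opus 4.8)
The plan is to fix a simultaneous Casimir eigenspace and run the equivalences inside it, using two structural inputs: the adjointness of the operators $Y^a$ with respect to \eqref{eq:VecCuspInnerProd}, and the expression of $\Delta_1,\Delta_2$ as polynomials in the $Y^a$. Since each $Y^a$ is built from $\frak g_\C$, it commutes with the central operators $\Delta_1,\Delta_2$, so all three conditions respect the decomposition $\mathcal A^d=\bigoplus_\mu\mathcal A^d_\mu$ of theorem \ref{thm:SpectralDecomp}; I may therefore assume $\phi\in\mathcal A^d_\mu$, where $\Delta_i$ acts by the scalar $\lambda_i(\mu)$ and $\Lambda_x$ by $27\lambda_2(\mu)^2+4(\lambda_1(\mu)+x^2-1)(\lambda_1(\mu)+4x^2-1)^2$. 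On this finite-dimensional space condition~(3) becomes, for $d\ge 2$, the numerical constraint $\Lambda_{(d-1)/2}(\mu)=0$, a condition on $\mu$ alone.

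First I would establish (1)$\Leftrightarrow$(2). With respect to the inner product the raising and lowering operators are adjoint up to nonzero scalars, $(Y^a)^\ast=c_a\,Y^{-a}$ with $c_a\ne 0$, and $(Y^0)^\ast$ is a nonzero multiple of $Y^0$ (so $Y^0$ is normal, hence diagonalizable with orthogonal eigenspaces); this comes from the skew-adjointness of the $\frak p$-action and the Clebsch--Gordan bookkeeping of section \ref{sect:Xjaction}. Orthogonality to $Y^1\mathcal A^{d-1}$ and to $Y^2\mathcal A^{d-2}$ then translates into $Y^{-1}\phi=0$ and $Y^{-2}\phi=0$, while orthogonality to $Y^0 Y^2\mathcal A^{d-2}$ becomes $(Y^0Y^2)^\ast\phi=0$, i.e.\ $Y^{-2}Y^0\phi=0$. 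On the kernel $\mathcal M^d_\mu:=\ker Y^{-1}\cap\ker Y^{-2}\cap\mathcal A^d_\mu$ the operator $Y^0$ acts as a scalar --- this is multiplicity one of the minimal $K$-type, which I will read off from the explicit action rather than quote abstractly --- so once $Y^{-1}\phi=Y^{-2}\phi=0$ the condition $Y^{-2}Y^0\phi=0$ is equivalent to $\phi$ being a $Y^0$-eigenfunction, and both (1) and (2) reduce to $\phi\in\mathcal M^d_\mu$. The boundary cases $d=0,1$ are automatic because the constraint $\WigDMat{d+a}\subset\WigDMat d\otimes\WigDMat 2$ forces the lowering operators to vanish identically there (there is no $K$-type $\WigDMat{-1}$, and $\WigDMat 0\not\subset\WigDMat 1\otimes\WigDMat 2$).

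Next I would prove (2)$\Leftrightarrow$(3). Writing $\Delta_1$ and $\Delta_2$ as weight-preserving elements of $U(\frak g_\C)$ and reducing modulo the commutation relations, every term ending in $Y^{-1}$ or $Y^{-2}$ annihilates a form in $\mathcal M^d_\mu$; what survives expresses $\Delta_1\phi$ and $\Delta_2\phi$ as fixed polynomials in the $Y^0$-eigenvalue $\nu$ and the weight $d$, with the Casimir of $K$ contributing a constant of size $d(d+1)$. Setting these equal to $\lambda_1(\mu),\lambda_2(\mu)$ and eliminating $\nu$ yields a single relation among $\lambda_1(\mu),\lambda_2(\mu),d$, which I claim is exactly $\Lambda_{(d-1)/2}(\mu)=0$; the resultant-type shape $27(\cdot)^2+4(\cdot)(\cdot)^2$ reflects this elimination. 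For the converse, when $\Lambda_{(d-1)/2}(\mu)=0$ with $d\ge 2$ the parameters necessarily lie on the Weyl orbit of $\mu=(x+it,-x+it,-2it)$ with $x=\tfrac{d-1}{2}$ --- one checks directly that there $\lambda_1-1=-x^2+3t^2$ and $\lambda_2=2it(x^2+t^2)$ make $\Lambda_x$ vanish identically --- so $\WigDMat d$ is the minimal $K$-type, forcing $Y^{-1}\phi=Y^{-2}\phi=0$ and, by scalarity, $Y^0\phi=\nu\phi$. This closes the equivalence.

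The main obstacle is the explicit reduction in the second step: producing the expansions of $\Delta_1$ and $\Delta_2$ in the $Y^a$ and tracking the Clebsch--Gordan and commutator coefficients accurately enough that the eliminant reproduces the precise shape $27\Delta_2^2+4(\Delta_1+x^2-1)(\Delta_1+4x^2-1)^2$, including the constants $27$ and $4$ and the shift $x=\tfrac{d-1}{2}$. The algebra is heavy but mechanical once the action of the five operators is pinned down in section \ref{sect:Xjaction}, and the numerical check with $\mu=(x+it,-x+it,-2it)$ serves as a consistency anchor. The only genuinely non-elementary ingredient is the scalarity of $Y^0$ on $\mathcal M^d_\mu$ (multiplicity one at the minimal weight), which underlies both the eigenfunction clause and the boundary cases.
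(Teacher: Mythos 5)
There is a genuine gap, in fact two. First, your claim that $Y^0$ acts as a scalar on $\ker Y^{-1}\cap\ker Y^{-2}\cap\mathcal{A}^d_\mu$ is false precisely in the case that forces the $Y^0$ clauses into the statement: at $d=2$ with generic $\mu$, the kernel of the lowering operators is spanned by the vector $g_1$ of proposition \ref{prop:GDPower}, which is \emph{not} a $Y^0$-eigenvector --- equation \eqref{eq:Ym2Y0g1} gives $Y^{-2}_\mu Y^0_\mu g_1\ne 0$ even though $Y^{-2}_\mu g_1=0$. So your reduction of both (1) and (2) to ``$\phi\in\mathcal{M}^d_\mu$'' with the $Y^0$ conditions automatic cannot be right; the whole point of the third orthogonality condition is to exclude this $d=2$ false positive, and the paper handles it by applying the adjoint argument to the word $L=Y^{-2}Y^0$ rather than by any multiplicity-one principle for $Y^0$ on the kernel.

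Second, your direction (3)$\Rightarrow$(2) skips the degenerate root of $\Lambda_{(d-1)/2}$. On parameters $\mu=(x+it,-x+it,-2it)$ the eigenvalue of $\Lambda_X$ factors as $4(X-x)(X+x)\paren{9t^2+(2X-x)^2}\paren{9t^2+(2X+x)^2}$, which vanishes not only at $x=\tfrac{d-1}{2}$ but also at $(x,t)=(d-1,0)$, i.e.\ $\mu=(d-1,1-d,0)$ up to permutation. Ruling that case out is not formal: the paper needs the bad-Whittaker-function analysis of section \ref{sect:BadWhitt} (equivalently the real eigenvalue of the skew-adjoint $Y^0$ via \eqref{eq:BadWhittEigen}) to show no cusp form of $K$-type $\WigDMat{d}$ carries those parameters. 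Likewise your step ``$\Lambda_{(d-1)/2}(\mu)=0$ forces $\WigDMat{d}$ to be the minimal $K$-type'' silently invokes the full classification of theorem \ref{thm:GDCusp} (which itself rests on multiplicity one and the Whittaker computations), whereas your proposed self-contained route --- expanding $\Delta_1,\Delta_2$ in the $Y^a$ and eliminating the $Y^0$-eigenvalue --- is left entirely unexecuted and is not how the paper argues: the paper proves (2)$\Leftrightarrow$(3) by first pinning down $\mu$ via theorem \ref{thm:GDCusp} and then merely evaluating $\Lambda_X$ on the two admissible parameter families. Until the $d=2$ exception and the $(d-1,1-d,0)$ root are addressed, the proposed proof does not close.
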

We say that such a $\phi$ is at its minimal $K$-type, or that $\phi$ is a minimal-weight form, and we define $\mathcal{A}^{d*} \subset \mathcal{A}^d$ to be the subspace of such forms.
Further define $\mathcal{A}^{d*}_\mu$ to be the subspace of simultaneous eigenfunctions of $\Delta_1$ and $\Delta_2$ with eigenvalues matching $p_{\rho+\mu}$.

The value of a description as in part 3 is that it involves only the particular $K$-type we are interested in, without reference to $\mathcal{A}^{d-1}$ or $\mathcal{A}^{d-2}$.
We initially take part 2 of the proposition as our working definition of minimal-weight forms, and the equivalence of the two remaining conditions comes at the very end of the paper in section \ref{sect:MinKProof}; in particular, after we have theorem \ref{thm:GDCusp}, below.

The appearance of $Y^0$ in the above proposition is strictly to accomodate an exceptional case that occurs at $d=2$ and begins to trouble us in section \ref{sect:GoingDown} (see the discussion following proposition \ref{prop:GDPower}).

For vectors in $\C^{2d+1}$ associated in some manner with the Wigner $\WigDName$-matrix (or the Jacquet-Whittaker function), we again index from the central entry.
For $\abs{j} \le d$, let $\bv^d_j$ be the $(2d+1)$-dimensional row vector with entries
\begin{align}
\label{eq:vdef}
	\bv^d_{j,m'} =& \delta_{m'=j} & \abs{m'} \le& d,
\end{align}
and set
\begin{align}
\label{eq:udef}
	\bu^{d,\pm}_j = \frac{1}{2}(\bv^d_j \pm (-1)^d \bv^d_{-j}).
\end{align}
In section \ref{sect:GDCuspForms}, we prove
\begin{thm}
\label{thm:GDCusp}
	Suppose $\phi \in \mathcal{A}^{d*}_\mu$.
	Then its $n$-th Fourier coefficient, $n\in\Z^2$, is a multiple of $f W^d(\cdot, \mu, \psi_n)$ where $f\in\C^{2d+1}$ and $\mu$ can be taken as one of the following:
	\begin{enumerate}
		\item For $d=0$, we use $f=1$ and $\Re(\mu)=0$ or $\mu=(x+it,-x+it,-2it)$, $\abs{x}<\frac{1}{2}$.
		\item For $d=1$, we use $f=\bu^{1,-}_0$ and $\Re(\mu)=0$ or $\mu={(x+it,-x+it,-2it)}$, $\abs{x}<\frac{1}{2}$.
		\item For $d\ge 2$, we use $f=\bu^{d,+}_d$ and $\mu=\paren{\frac{d-1}{2}+i t, -\frac{d-1}{2}+i t, -2i t}$,
	\end{enumerate}
	with $x,t$ real.
\end{thm}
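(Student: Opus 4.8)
The plan is to deduce the rank-one shape of the Fourier coefficient from multiplicity one, and then read off $f$ and $\mu$ from the vanishing of the lowering operators that defines a minimal-weight form.

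\emph{Producing $f\,W^d$.} Since $\phi\in\mathcal{A}^{d*}_\mu\subset\mathcal{A}_\mu$ lies in an irreducible component of the right regular representation, Theorem~\ref{thm:WhittMultOne} writes its $n$-th Fourier coefficient as $c_n$ times the Jacquet integral of $I_{\mu,f}$ for some $f\in C^\infty(K)$. The equivariance $\phi(gk)=\phi(g)\WigDMat{d}(k)$ shows that only the $\WigDMat{d}$-isotypic part of $f$ contributes, so projecting onto this $K$-type collapses the Jacquet integral to $c_n\, f\, W^d(\cdot,\mu,\psi_n)$ for a single fixed row vector $f\in\C^{2d+1}$, with $W^d$ exactly \eqref{eq:JacWhittDef}. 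It then remains to determine the admissible pairs $(f,\mu)$.

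\emph{Imposing the lowering conditions.} By Proposition~\ref{prop:EquivMinimalKTypes}(2) a minimal-weight $\phi$ is annihilated by $Y^{-1}$ and $Y^{-2}$ and is a $Y^0$-eigenfunction. Feeding $f\,W^d$ through the explicit formulas for $Y^{-1}$ and $Y^{-2}$ acting on the Jacquet-Whittaker function (established in the going-down analysis) turns $Y^{-1}(f\,W^d)=0$ and $Y^{-2}(f\,W^d)=0$ into a homogeneous linear system in the $2d+1$ entries of $f$ whose coefficients are explicit in $\mu$. I would organize this system by the $V$-isotypic projections $\Sigma^d_\chi$, which decouple it and confine $f$ to a single character component, spanned by the vectors $\bu^{d,\pm}_j$ of \eqref{eq:udef}.

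\emph{Solving.} For $d=0$ both lowering operators vanish trivially (there is no $\mathcal{A}^{-1}$ or $\mathcal{A}^{-2}$), so every form is minimal and $f=1$; for $d=1$ only $Y^{-1}\phi=0$ is a genuine constraint and it forces $f=\bu^{1,-}_0=\bv^1_0$. In both low-rank cases $\mu$ is pinned not by the operators but by the unitary structure: the Hermitian (self-dual) condition $\set{\mu_1,\mu_2,\mu_3}=\set{-\wbar{\mu_1},-\wbar{\mu_2},-\wbar{\mu_3}}$, up to the Weyl action \eqref{eq:WhittFEs}, has as its only solutions with $\mu_1+\mu_2+\mu_3=0$ the families $\Re(\mu)=0$ and $\mu=(x+it,-x+it,-2it)$, and Theorem~\ref{thm:Unrammu} then bounds $\abs{x}<\tfrac12$. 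For $d\ge2$ both $Y^{-1}\phi=0$ and $Y^{-2}\phi=0$ are active: together they select $f\propto\bu^{d,+}_d$ and are consistent only along a one-parameter family of $\mu$, which a direct computation identifies with $\mu=\paren{\tfrac{d-1}{2}+it,-\tfrac{d-1}{2}+it,-2it}$; substituting $\lambda_1,\lambda_2$ confirms this is the zero locus of $\Lambda_{(d-1)/2}$ from \eqref{eq:LambdaXDef}, and the functional equations \eqref{eq:WhittFEs} let us pass to this canonical representative. Note that here $\abs{x}=\tfrac{d-1}{2}\ge\tfrac12$, so Theorem~\ref{thm:Unrammu} is neither available nor needed; the exact value is forced by the operators themselves. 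The case $d=2$ is exceptional: $Y^{-1},Y^{-2}$ do not by themselves isolate the solution, and the $Y^0$-eigenfunction hypothesis is what finishes it.

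The main obstacle is the computation and analysis in the two middle steps: one must evaluate $Y^{-1}$ and $Y^{-2}$ on $f\,W^d$ explicitly, exploiting the way these differential-vector operators interact with the matrix structure of $W^d$ and with the functional equations \eqref{eq:WhittFEs}, and then show the solution space is exactly one-dimensional (a single $\bu$-vector) while the consistency of the system pins $\mu$ to the stated value. Keeping track of the $V$-character bookkeeping and handling the $d=2$ degeneracy through $Y^0$ are the delicate points.
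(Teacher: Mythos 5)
Your overall route (multiplicity one to produce $f\,W^d$, the lowering operators as a homogeneous linear system on $f$, then the functional equations to normalize the pair $(f,\mu)$) is the paper's route, but the middle step is where all the content lies and your description of it fails in three specific places. First, the system $Y^{-1}_\mu f=Y^{-2}_\mu f=0$ is \emph{not} consistent only along $\mu=\paren{\tfrac{d-1}{2}+it,-2it,-\tfrac{d-1}{2}+it}$: it also has nonzero solutions at $\mu=(d-1,0,1-d)$ (spanned by $\bv^d_{\pm d}$ and their duals) and, for odd $d$ at $t=0$, the extra solutions $g_4^{d,\pm}$ (proposition \ref{prop:GDPower}, cases 5 and 6). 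The point $\mu=(d-1,0,1-d)$ even survives the $Y^0$-eigenfunction test — the surviving vectors are genuine $Y^0_\mu$-eigenvectors, see \eqref{eq:BadWhittEigen} — and is excluded only because that eigenvalue is real and nonzero while $Y^0$ is skew-adjoint on cusp forms (equivalently, because the Whittaker function \eqref{eq:WhittCentralEntry} tends to infinity as $y_2\to0$, contradicting the boundedness of cusp forms). You never perform this exclusion, and without it the theorem's list of $\mu$ is not established. Second, at $\mu=\paren{\tfrac{d-1}{2}+it,-2it,-\tfrac{d-1}{2}+it}$ the solution space is two-dimensional and spans two different $V$-characters ($g_2^{d,0,\varepsilon}$ and $g_2^{d,1,-\varepsilon}$); the $V$-isotypic decomposition does not "confine $f$ to a single character component." The collapse to the single vector $\bu^{d,+}_d$ happens only at the level of Whittaker functions, because one basis vector is annihilated by the Jacquet integral ($W^d_{d,m}=0$ in theorem \ref{thm:MinWhitt}); determining exactly which power-function-minimal vectors are so annihilated is the content of propositions \ref{prop:GDWhittFEs}, \ref{prop:MinimalWhittsZero} and \ref{prop:GDWhitt} and requires the explicit intertwining-operator computations. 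For the same reason, $Y^{-a}\bigl(fW^d\bigr)=0$ does not immediately give $Y^{-a}_\mu f=0$, so even setting up your linear system from the cusp-form hypothesis already needs this kernel analysis.

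Third, at $d=1$ the lowering conditions are vacuous ($Y^{-1}_\mu$ annihilates all of $\C^3$ and there is no weight $-1$), so they cannot "force $f=\bu^{1,-}_0$." What selects it is the $Y^0$-eigenvalue computation \eqref{eq:d1Y0Eigen} — in the complementary-series case only $\bu^{1,-}_1$ has purely imaginary eigenvalue under the skew-adjoint $Y^0$ — followed by the $w_3$ and $w_5$ functional equations, which carry $\bu^{1,-}_1$ and $\bu^{1,+}_1$ to multiples of $\bu^{1,-}_0$ at a permuted $\mu$. The unitarity constraint and theorem \ref{thm:Unrammu} for $|x|<\tfrac12$ you do have right, but the mechanism you cite for pinning $f$ at $d=1$ is not the one that does the work.
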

The (scalar) multiple of the theorem is again the Fourier-Whittaker coefficient.
Notice that the components of $\phi=(\phi_{-d},\ldots,\phi_d)\in\mathcal{A}^{d*}_\mu$ are scalar-valued cusp forms $\phi_j\in\mathcal{A}_\mu$; then for $\phi_j$, the element of $C^\infty(K)$ whose existence is assured by theorem \ref{thm:WhittMultOne}, call it $\tilde{f}$, is precisely $\tilde{f}(k) = f \WigD{d}{\cdot}{j}(k)$ where $f$ is the vector given in the above theorem.

We note that the symmetric square of a holomorphic modular form of even weight $k$ will occur at minimal weight $d=2k-1$ with $\mu$ as in theorem \ref{thm:GDCusp} part 3 at $t=0$, c.f. \cite[section 6.4]{MeMiller}.

The Strong Selberg Eigenvalue conjecture states that cuspidal representations occuring in the spectral expansion should be tempered; in the context of theorem \ref{thm:GDCusp}, this is precisely the statement that only the case $\Re(\mu)=0$ occurs in parts 1 and 2.
The argument of section \ref{sect:MinKProof} also gives an equivalent statement of this conjecture:
\begin{conj}
	The null space of $\Lambda_x$ in the cusp forms $L^2_\text{cusp}(\Gamma\backslash G)$ is trivial for $x \in (0,\frac{1}{2})$.
\end{conj}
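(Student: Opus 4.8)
The operator $\Lambda_x$ is a polynomial in $\Delta_1$ and $\Delta_2$, so it acts on each eigenspace $\mathcal{A}_\mu$ by the scalar $\Lambda_x(\mu) = 27\lambda_2(\mu)^2 + 4(\lambda_1(\mu)+x^2-1)(\lambda_1(\mu)+4x^2-1)^2$, and a cusp form lies in the null space of $\Lambda_x$ precisely when it is supported on those $\mathcal{A}_\mu$ with $\Lambda_x(\mu)=0$. A direct substitution shows that for $\mu=(x+it,-x+it,-2it)$ one has $\lambda_1(\mu)+x^2-1=3t^2$, $\lambda_1(\mu)+4x^2-1=3(x^2+t^2)$ and $\lambda_2(\mu)=2it(x^2+t^2)$, whence $\Lambda_x(\mu)=-108t^2(x^2+t^2)^2+108t^2(x^2+t^2)^2=0$. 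Thus, invoking the analysis of section \ref{sect:MinKProof}, the conjecture is equivalent to the statement that \emph{no nonzero cusp form on $\Gamma\backslash G$ carries a spectral parameter $\mu=(x+it,-x+it,-2it)$ with $x\in(0,\tfrac12)$ and $t\in\R$}. These are exactly the non-tempered, complementary-series parameters of Vahutinski\u{\i}'s classification \cite{Vahu}; Theorem \ref{thm:Unrammu} already confines any such $x$ to $\abs{x}<\tfrac12$, and the content of the conjecture is to collapse the open interval $(0,\tfrac12)$ onto the single point $x=0$.

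The first route I would pursue is the $L$-function positivity method of Jacquet--Shalika and Luo--Rudnick--Sarnak. Given such a $\phi$ generating an irreducible unitary component (Theorem \ref{thm:WhittMultOne}), I would form the Rankin--Selberg convolution $L(s,\phi\times\wtilde{\phi})$, whose Dirichlet coefficients are nonnegative and whose completed form has a simple pole at $s=1$. Its Archimedean factor is a product of $\Gamma_\R(s+\mu_i-\wbar{\mu}_j)$, and the real parts $\Re(\mu_i-\wbar{\mu}_j)$ range up to $2x$. Positivity of the coefficients, weighed against the location of the trivial zeros of the $\Gamma$-factors relative to the pole at $s=1$, forces $2x<1$ and so reproves the trivial bound $x<\tfrac12$. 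Feeding in a higher-degree auxiliary $L$-function — the mechanism behind the Kim--Sarnak refinements \cite[appendix 2]{KS01} and, for the case $d=1$, the Rankin--Selberg computation of \cite{JacShal02} — tightens this to $x<\tfrac12-\delta$ for an explicit $\delta>0$.

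The second, and in the present framework more natural, approach is the spectral Kuznetsov formula developed in the companion work \cite{SpectralKuz, WeylI}, whose Whittaker-function input is precisely what the bulk of this paper makes explicit. The plan is to select a test function whose Archimedean transform is nonnegative on the tempered spectrum $\Re(\mu)=0$ and is simultaneously nonvanishing and of a controlled sign on the putative complementary-series locus $\mu=(x+it,-x+it,-2it)$, $x\in(0,\tfrac12)$. One then argues that the geometric side — a main term together with sums of $GL(3)$ Kloosterman sums against Bessel-type kernels — carries the opposite sign once the exceptional spectrum is isolated, so that the existence of any such $\phi$ would be contradictory. The level-one setting $\Gamma=SL(3,\Z)$ is favourable here, since the Kloosterman moduli are constrained and the main term is explicit, paralleling the way exceptional eigenvalues are ruled out for $SL(2,\Z)$.

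The main obstacle is the final collapse to $x=0$. Both routes run into the same wall: the positivity inputs above saturate strictly inside $(0,\tfrac12)$, and on the Kuznetsov side no unconditionally nonnegative test function is known that vanishes to sufficiently high order across the entire tempered spectrum while still detecting an arbitrarily small $x>0$. Removing this last sliver is exactly the generalized Ramanujan--Selberg conjecture for $GL(3)$, and I expect a complete proof to demand genuinely new arithmetic leverage — for instance an amplified fourth-moment estimate, or a spectral density theorem sharp enough to forbid even a single exceptional parameter — rather than a refinement of the analytic arguments sketched here.
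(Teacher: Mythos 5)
The statement you were asked to prove is not a theorem of the paper: it is stated there as a conjecture, and the paper's entire treatment of it consists of the remark that the computation in section \ref{sect:MinKProof} makes it an equivalent formulation of the Strong Selberg eigenvalue conjecture, i.e.\ that only $\Re(\mu)=0$ occurs in parts 1 and 2 of theorem \ref{thm:GDCusp}. So there is no proof to compare yours against, and you correctly do not claim one. The verifiable content of your proposal agrees with the paper: $\Lambda_x$ acts on $\mathcal{A}_\mu$ by a scalar, your evaluation of that scalar at $\mu=(x+it,-x+it,-2it)$ matches the factorization $4(X-x)(X+x)\paren{9t^2+(2X-x)^2}\paren{9t^2+(2X+x)^2}$ computed in section \ref{sect:MinKProof}, and the confinement $\abs{x}<\tfrac12$ is theorem \ref{thm:Unrammu}. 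The Rankin--Selberg positivity route and the Kuznetsov-formula route you outline are plausible attacks (the latter is indeed the intended downstream application of this paper's results), but, as you concede, they do not reach $x=0$; they are a research program, not a proof.

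One caveat, which applies to your equivalence claim and equally to the paper's prose: the null space of $\Lambda_X$ is not exhausted by complementary-series parameters. By the factorization above, the eigenvalue also vanishes when $t=0$ and $x=2X$; taking $X=\tfrac14$, this includes any hypothetical cusp form of minimal weight $d_0=2$ with spectral parameter $\mu=\paren{\tfrac12,-\tfrac12,0}$ (theorem \ref{thm:GDCusp}, part 3, with $d=2$, $t=0$). That representation is unitarily induced from discrete series, hence tempered, so its existence would falsify the conjecture as literally stated without contradicting the Selberg/Ramanujan statement. Thus what your reduction actually yields is the implication that the conjecture forces the absence of complementary series; the converse additionally requires the non-existence of weight-two cusp forms with $t=0$ for $SL(3,\Z)$, which is expected but is not supplied by temperedness or by anything proved in the paper. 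If you pursue either of your two routes, this exceptional point should be added to the list of parameters your positivity argument must detect.
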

In fact, we expect $\Lambda_x$, $x\in(0,\frac{1}{2})$, to be a positive operator on the cusp forms.

A by-product of the analysis of section \ref{sect:GDCuspForms} is the explicit determination of all vectors $f\in\C^{2d+1}$ such that $f W^d(\cdot, \mu, \psi_n)$ is identically zero, when $\mu$ is in the standard form \eqref{eq:GDAssume}.
In studying the action of the Lie algebra, i.e. the action of the $Y^a$ operators, on the Whittaker functions of cusp forms, theorem \ref{thm:WhittMultOne} allows us to pass from Whittaker functions defined as Fourier coefficients of cusp forms to Jacquet integrals of power functions (that is, elements of the principal series representation).
In this way, and having studied the operation of the $Y^a$ operators on the power functions, the determination of precisely which power functions are killed by the Jacquet integral becomes the main obstruction to studying the action of the $Y^a$ operators on the Whittaker functions.
Once we have a solid grasp on the action of the Lie algebra on the Whittaker functions, the return to studying cusp forms is more-or-less immediate from the Fourier expansion.

In theorem \ref{thm:GDCusp}, the vectors $f=\bu^{d,\varepsilon}_{2j+\delta}$, with $\delta\in\set{0,1}$ and $\varepsilon=\pm 1$, are the rows of the matrices $\Sigma^d_\chi$ as in \eqref{eq:SigmadchiDef} where $\chi = \chi_{(-1)^\delta,\varepsilon}$ as in \eqref{eq:chipmpmdef}.
Specifically, for $\Phi\in\mathcal{A}^{d*}$, the character $\chi$ is
\begin{align}
\label{eq:chifromphi}
	\piecewise{\chipmpm{++} & \If d=0, \\ \chipmpm{+-} & \If d=1, \\ \chi_{(-1)^d,+} & \If d \ge 2.}
\end{align}
We have essentially forced a choice of the character $\chi$ in theorem \ref{thm:GDCusp} to make the vector $f$ as nice as possible, and this is done by applying the functional equations of the Jacquet-Whittaker function as in proposition \ref{prop:GDWhittFEs} (which permutes both the character and the coordinates of $\mu$, see (I.3.27)); other allowable choices are $\chipmpm{--}$ or $\chipmpm{-+}$ when $d=1$ and $\chi_{(-1)^d,-}$ or $\chi_{\pm,(-1)^d}$ for $d \ge 2$.

In section \ref{sect:MinWhitt}, we compute the Mellin-Barnes integrals for the Whittaker functions of $GL(3)$ automorphic forms at their minimal $K$-types.
These have been computed by Miyazaki \cite{MiyaWhitt} for $d \ge 2$, Manabe, Ishii and Oda \cite{ManIshOda} for $d=1$ and Bump \cite{Bump01} for $d=0$.
In case $d\ge 1$, the bases used are somewhat different than ours, and for $d \ge 2$, the results here are somewhat stronger than Miyazaki's because we have solidified the connection between the Jacquet-Whittaker function and the Mellin-Barnes integral.
(Theorem 5.9 of \cite{MiyaWhitt} contains the phrase ``there is an element [of the Whittaker model]''; we have very precisely answered the question of which vector in the principal series representation gives rise to that element.)

For $\alpha\in\set{0,1}^3$, $\beta,\eta\in\Z^3$, $\ell\in\Z^2$ and $s\in\C^2$ define
\begin{align}
	\Lambda_\alpha(\mu) =& \pi^{-\frac{3}{2}+\mu_3-\mu_1} \Gamma\paren{\tfrac{1+\alpha_1+\mu_1-\mu_2}{2}} \Gamma\paren{\tfrac{1+\alpha_2+\mu_1-\mu_3}{2}} \Gamma\paren{\tfrac{1+\alpha_3+\mu_2-\mu_3}{2}},
\end{align}
\begin{align}
	\wtilde{G}(d,\beta,\eta, s,\mu)= \frac{\prod_{i=1}^3 \Gamma\paren{\frac{\beta_i+s_1-\mu_i}{2}} \Gamma\paren{\frac{\eta_i+s_2+\mu_i}{2}}}{\Gamma\paren{\frac{s_1+s_2+\sum_i (\beta_i+\eta_i)-2d}{2}}},
\end{align}
\begin{align}
	\wtilde{G}^0(\ell, s,\mu)= \wtilde{G}(0,0,0,s,\mu), \qquad \wtilde{G}^1(\ell,s,\mu)= \wtilde{G}(1,(\ell_1,\ell_1,1-\ell_1),(\ell_2,\ell_2,1-\ell_2),s,\mu),
\end{align}
and for $d \ge 2$,
\begin{align}
\label{eq:MinWhittGPSRLam}
	\Lambda^*(\mu) =& (-1)^d \pi^{-\frac{3}{2}+\mu_3-\mu_1} \Gamma(d) \Gamma\paren{\tfrac{1+\mu_1-\mu_3}{2}} \Gamma\paren{\tfrac{2+\mu_1-\mu_3}{2}}, \\
\label{eq:MinWhittGPSR}
	\wtilde{G}^d(\ell,s,\mu) =& \wtilde{G}(d,(d,0,\ell_1),(0,d,\ell_2),s,\mu).
\end{align}
Now for $\abs{m'}\le d$, write $m'=\varepsilon m$ with $\varepsilon = \pm 1$ and $0 \le m \le d$, and set
\begin{align}
	G^d_{m'}(s,\mu) =& \sqrt{\binom{2d}{d+m}} \sum_{\ell=0}^{m} \varepsilon^\ell \binom{m}{\ell} \wtilde{G}^d((d-m,\ell), s,\mu),
\end{align}
and take $G^d(s,\mu)$ to be the vector with coordinates $G^d_{m'}(s,\mu)$, $m'=-d,\ldots,d$.

We define the completed minimal-weight Whittaker function at each weight $d$ as
\begin{align}
	W^{d*}(y, \mu) =& \frac{1}{4\pi^2} \int_{\Re(s)=\mathfrak{s}} (\pi y_1)^{1-s_1} (\pi y_2)^{1-s_2} G^d\paren{s,\mu} \frac{ds}{(2\pi i)^2},
\end{align}
for any $\mathfrak{s} \in (\R^+)^2$.

\begin{thm}
\label{thm:MinWhitt}
	The Whittaker functions at the minimal $K$-types are
	\begin{align*}
		\Lambda_{(0,0,0)}(\mu) W^0(y, \mu, \psi_{1,1}) =& W^{0*}(y, \mu),
	\end{align*}
	\begin{align*}
		\sqrt{2}\Lambda_{(0,1,1)}(\mu) \bu^{1,-}_0 W^1(y, \mu, \psi_{1,1}) =& W^{1*}(y, \mu), \\
		-2 \Lambda_{(1,0,1)}(\mu) \bu^{1,-}_1 W^1(y, \mu, \psi_{1,1}) =& W^{1*}\paren{y, \mu^{w_4}}, \\
		2 \Lambda_{(1,1,0)}(\mu) \bu^{1,+}_1 W^1(y, \mu, \psi_{1,1}) =& W^{1*}\paren{y, \mu^{w_5}},
	\end{align*}
	and for $\mu=(\frac{d-1}{2}+it,-\frac{d-1}{2}+it,-2it)$ with $d \ge 2$,
	\begin{align*}
		\Lambda^*(\mu) W^d_{-d}(y, \mu, \psi_{1,1}) =& W^{d*}(y, \mu), \qquad W^d_{d,m} = 0.
	\end{align*}
\end{thm}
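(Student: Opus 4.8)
The plan is to lean on the uniqueness half of multiplicity one and to supply the normalization by a direct evaluation of the Jacquet integral \eqref{eq:JacWhittDef}. The completed function $W^{d*}$, being built from the Mellin--Barnes integral of $G^d(s,\mu)$, is by Miyazaki's analysis a moderate-growth solution of the Whittaker differential equations attached to $\Delta_1,\Delta_2$ with eigenvalues $\lambda_1(\mu),\lambda_2(\mu)$, and it is the unique such solution up to scalar once the $K$-type and the minimal-weight conditions (the kernel of $Y^{-1},Y^{-2}$) are imposed. The row $f\,W^d(\cdot,\mu,\psi_{1,1})$ is a Jacquet--Whittaker function, hence satisfies the same equations and has moderate growth, so by uniqueness it must equal a $\mu$-dependent scalar times $W^{d*}$. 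The whole content of the theorem is therefore (i) to identify which components survive, and (ii) to compute that scalar --- the factors $\Lambda_\alpha(\mu)$ and $\Lambda^*(\mu)$ --- and this I would do by computing the Jacquet integral itself rather than reading it off asymptotics, which is the step that pins down the precise element of the principal series.

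\textbf{The core computation.} I would take $g=y\in Y^+$ and perform the Iwasawa decomposition of $w_l u y$ inside \eqref{eq:JacWhittDef}, so that the power function $p_{\rho+\mu}$ and the Wigner matrix $\WigDMat{d}(k)$ separate, and then integrate over $U(\R)$ one variable at a time. Each successive one-dimensional integral has the shape \eqref{eq:classWhittDef}, so the recursion collapses to nested classical $GL(2)$ Whittaker/Gamma factors; taking the double Mellin transform in $y_1,y_2$ then reduces the remaining integrals to the ratio of Gamma functions $\wtilde{G}(d,\beta,\eta,s,\mu)$. Reassembling these pieces, with the binomial weights produced by expanding the relevant entries of $\WigDMat{d}(w_3)$, should reproduce the coordinates $G^d_{m'}(s,\mu)$, and hence $W^{d*}$, up to the claimed Gamma prefactors.

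\textbf{The $K$-type and the functional equations.} The appearance of the specific vectors $\bu^{1,-}_0,\bu^{1,-}_1,\bu^{1,+}_1,\bu^{d,+}_d$ is controlled by tracking $\WigDMat{d}(k)$ through the decomposition using only the diagonal action \eqref{eq:WigDPrimary} and the $V$-action \eqref{eq:WigDv}, treating $\WigDMat{d}(w_3)$ as a black box; the projection operators $\Sigma^d_\chi$ of \eqref{eq:SigmadchiDef} then select the character dictated by \eqref{eq:chifromphi} and isolate the surviving entries. For $d=1$ only one of the three identities needs a direct computation: the other two follow by applying the Whittaker functional equations \eqref{eq:WhittFEs} through the matrices $T^d(w_4,\mu)$ and $T^d(w_5,\mu)$, which permute $\mu$ to $\mu^{w_4},\mu^{w_5}$ and carry $\bu^{1,-}_0$ to $\bu^{1,-}_1$ and $\bu^{1,+}_1$, after which the prefactors $\Lambda_{(1,0,1)},\Lambda_{(1,1,0)}$ are bookkeeping of the resulting Gamma ratios $\Gamma^d_\mathcal{W}$.

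\textbf{The main obstacle.} The delicate point is part 3, where $\mu=(\tfrac{d-1}{2}+it,-\tfrac{d-1}{2}+it,-2it)$ forces $\mu_1-\mu_2=d-1\in\Z_{\ge1}$; here the Jacquet integral converges only on $\Re(\mu_1)>\Re(\mu_2)>\Re(\mu_3)$ and the identity must be reached by analytic continuation, with care taken that no spurious poles in $s$ cross the contour defining $W^{d*}$. The same degeneracy, I expect, produces the vanishing $W^d_{d,m}=0$: for $\psi_{1,1}$ one has $\sgn(y)=+1$, so the classical building block $\mathcal{W}^d_{m,m}(y,u)$ of \eqref{eq:classWhittDef} carries $\Gamma\paren{\tfrac{1-m+u}{2}}$ in its denominator, and at the degenerate parameters this Gamma acquires a pole for the entries indexed near $m'=d$, killing the top row while leaving $W^d_{-d}$ nonzero and equal to $\Lambda^*(\mu)^{-1}W^{d*}$. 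Confirming precisely which Gamma factors vanish, and that the one surviving row matches $W^{d*}$ after continuation, is the step where the direct Jacquet-integral evaluation sharpens Miyazaki's existence statement into an explicit identification.
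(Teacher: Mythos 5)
Your overall strategy is close in spirit to the paper's, but it diverges at the two places where the actual work happens, and one of those divergences is a genuine gap. The paper does \emph{not} evaluate the Jacquet integral for all components: it computes only the single entry $W^d_{-d,0}$ (the central column of the lowest row), starting from (I.3.22), and the computation is tractable precisely because of two degenerations at the minimal $K$-type that your ``core computation'' paragraph never identifies. First, at $\mu_1-\mu_2=d-1$ the $GL(2)$ building block collapses to an elementary function, $\mathcal{W}_{-d}(y,d-1)=\frac{(2\pi)^d y^{d-1}}{(d-1)!}e^{-2\pi y}$; second, the only Wigner $\WigdName$-polynomial that enters the central column is $\Wigd{d}{-d}{0}(x)=\frac{\sqrt{(2d)!}}{d!\,2^d}(1-x^2)^{d/2}$, which is again elementary. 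Only then does the triple Mellin--Barnes integral reduce via Barnes' second lemma \eqref{eq:BarnesSecond} to $G^d_0(s,\mu)/\Lambda^*(\mu)$. Your plan to integrate over $U(\R)$ one variable at a time and ``reassemble'' all components, with binomial weights read off from $\WigDMat{d}(w_3)$, would require integrating against the general Jacobi polynomials $\JacobiP{d-m}{m-m'}{m+m'}$ appearing in \eqref{eq:WigdtoJacobi}; the resulting integrals do not collapse under Barnes' lemma, and this is where a direct all-components evaluation would stall.

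The paper instead propagates from the central entry to the full vector without invoking multiplicity one: the Casimir equations restricted to Whittaker functions yield raising/lowering operators $S^{\pm}_{m'}$ on the column index (lemma \ref{lem:WhittEntryRaiseLower}), so every entry is determined by the central one, and the proof is completed by verifying that the proposed $G^d_{m'}(s,\mu)$ satisfy the second-order recursion \eqref{eq:Delta1Whitt} --- which, after factoring out common Gamma factors, reduces to a telescoping identity among beta functions. Your uniqueness-based route (Whittaker model is one-dimensional, so one normalization computation suffices) is logically viable, but it imports exactly the Miyazaki/Shalika input that the paper confines to the uniqueness statement of theorem \ref{thm:WhittMultOne}, and it is the self-contained ODE recursion that upgrades Miyazaki's ``there is an element of the Whittaker model'' to an identification of the precise principal-series vector. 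Your remaining points are consistent with the paper: the $d=1$ cases do follow from one direct computation plus the $w_4,w_5$ functional equations, the continuation to $\mu_1-\mu_2=d-1$ is justified because the Mellin--Barnes integral is entire in $\mu$ by Stirling, and the vanishing $W^d_{d,m}=0$ is indeed caused by the pole of $\Gamma\paren{\tfrac{1-m+u}{2}}$ in \eqref{eq:classWhittDef} at $m=d$, $u=d-1$ inside \eqref{eq:NewI322}.
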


As mentioned in the introduction, this is the main vehicle for the analytic study of $GL(3)$ automorphic forms.
It has applications in the functional equations of $GL(3)$ $L$-functions (see e.g. section 6.5, especially lemma 6.5.21, of \cite{Gold01} or \cite{HIM01}) and their Rankin-Selberg convolutions (see e.g. \cite{Stade02, HIM02, WeylI, WeylII}), and is fundamental to the creation of Kuznetsov-type trace formulas (see \cite{SpectralKuz,WeylI,WeylII}), among other applications.

\subsection{The full spectral expansion}
If $\Phi \in \mathcal{A}^{d_0*}_\mu$, $d_0 \ge 0$ has Fourier coefficients
\[ \rho_\Phi(n) f W^{d_0}(\cdot, \mu, \psi_n) \]
with the parameters $\mu$ and $f$ given by theorem \ref{thm:GDCusp} and $\chi$ is given by \eqref{eq:chifromphi}, then for all $d$ (not just $d \ge d_0$), we may construct two matrix-valued forms
\begin{align}
\label{eq:PhiFourierWhitt}
	\Phi^d(g) =& \sum_{\gamma\in (U(\Z)V)\backslash SL(2,\Z)} \sum_{v\in V} \sum_{n \in \N^2} \rho_\Phi(n) \Sigma^d_\chi W^d(\gamma v g, \mu, \psi_n),
\end{align}
and $\wtilde{\Phi}^d(g)$ defined similarly but using the spectral parameters $-\wbar{\mu}$ instead.
Note that this is not the dual form.
We will insist on a slightly unusual normalization \eqref{eq:PhiNormalization} of $\Phi$.

In section \ref{sect:GoingUp}, we look at the spaces of vector-valued forms generated by applying the $Y^a$ operators to the minimal-weight cusp forms.
More precisely, we operate at the level of the coefficient vectors.
That is, if $v$ lies in an appropriate subspace of $\C^{2d+1}$ (the rowspace of $\Sigma^d_\chi$), then the entries of $p_{\rho+\mu}(y) v \WigDMat{d}(k)$ lie in the principal series representation and the entries of $v W^d(g,\mu,\psi_n)$ lie in the Whittaker model, and we will show the entries of $v \Phi^d(g)$ (or more precisely $v \wtilde{\Phi}^d(g)$) are cusp forms.
Since the $Y^a$ operators function identically on all three objects by left-translation invariance, we are free to study their behavior on objects of the first type.

Now let $\mathcal{S}_{3,\mu}^d$ be a basis of $\mathcal{A}_\mu^{d*}$ for each $d \ge 0$, and take $\mathcal{S}_3^d$ to be the union of $\mathcal{S}_{3,\mu}^d$ for all $\mu$ and $\mathcal{S}_3 = \bigcup_d \mathcal{S}_3^d$.
As described above, for each $\Phi\in\mathcal{S}_3$ and every $d$, we may construct the two matrix-valued forms $\Phi^d$ and $\wtilde{\Phi}^d$.
It is the case that the function $\wtilde{\Phi}^d$ will be identically zero when $d$ is less than the minimal weight of $\Phi$, and in section \ref{sect:CuspStructure}, we pull everything together into the following very uniform expansion of cusp forms:
\begin{thm}
\label{thm:CuspSpectralExpand}
	For $f\in\mathcal{A}$, we have
	\begin{align*}
		f(g) = \sum_{\Phi\in\mathcal{S}_3} \sum_{d=0}^\infty (2d+1) \Tr\paren{\Phi^d(g)\int_{\Gamma\backslash G} f(g')\wbar{\trans{\wtilde{\Phi}^d(g')}}dg'}.
	\end{align*}
\end{thm}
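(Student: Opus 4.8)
The plan is to read the identity as a Parseval relation for two biorthogonal, complete families of cusp forms: the scalar entries of the matrix-valued forms $\Phi^d$, which we expand in, and the entries of $\wtilde{\Phi}^d$, which (weighted by $2d+1$ and paired through the trace) form the dual family. By Theorem~\ref{thm:SpectralDecomp} the space $\mathcal{A}$ splits orthogonally over the Casimir eigenspaces $\mathcal{A}_\mu$, and inside each $\mathcal{A}_\mu$ the $K$-types $\mathcal{A}^d_\mu$ are mutually orthogonal and finite-dimensional; since every object in the statement respects this splitting, it suffices to verify the expansion one eigenspace $\mathcal{A}^d_\mu$ at a time. The shape $\sum_d (2d+1)\Tr(\cdots)$ is exactly the Peter--Weyl expansion in the right $K$-variable: expanding $k\mapsto f(gk)$ over the entries of the Wigner $\WigDName$-matrices (a basis of $L^2(K)$), collecting each isotypic piece into a vector-valued form in $\mathcal{A}^d$, and recombining at $k=I$ produces the trace together with its dimension weight $2d+1$. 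So the first step is to set up this $K$-Fourier decomposition of $f$ and identify its weight-$d$ part with the projection of $f$ onto $\mathcal{A}^d$.

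Next I would prove completeness: fixing $\mu$ and $d$, the forms $\Phi^d$ obtained from the part of the basis $\mathcal{S}_3$ whose parameter matches $\mu$ (up to the Weyl translates permitted by the functional equations \eqref{eq:WhittFEs}) and whose minimal weight is at most $d$ should span $\mathcal{A}^d_\mu$. The engine here is the going-up analysis of section~\ref{sect:GoingUp}: by Proposition~\ref{prop:EquivMinimalKTypes} together with the going-down argument of section~\ref{sect:GoingDown}, every element of $\mathcal{A}^d_\mu$ is reached by applying the raising operators $Y^a$ to minimal-weight forms, and because the $Y^a$ act identically on cusp forms, on their Jacquet--Whittaker functions, and on power functions by left-invariance, Theorem~\ref{thm:WhittMultOne} lets us identify those raised cusp forms with the explicit constructions \eqref{eq:PhiFourierWhitt}. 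The point flagged in the introduction is essential: $W^d$ vanishes on the $K$-types a given form misses, so the single formula \eqref{eq:PhiFourierWhitt} yields the correct form at every $d$, and $\wtilde{\Phi}^d\equiv 0$ exactly when $d$ is below the minimal weight of $\Phi$, which truncates the $d$-sum correctly.

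Then comes biorthogonality and normalization. The coefficient matrix $\int_{\Gamma\backslash G} f\,\wbar{\trans{\wtilde{\Phi}^d}}$ is the natural Hermitian $L^2$-pairing, and I would evaluate the Gram pairings $\langle \Phi^e,\wtilde{\Phi}^d\rangle$ directly by unfolding the Fourier--Whittaker series \eqref{eq:PhiFourierWhitt} against the Fourier expansion of the conjugate form — a Rankin--Selberg/Parseval unfolding over the cosets $(U(\Z)V)\backslash SL(2,\Z)$, over $V$, and over $n\in\N^2$. Orthogonality of distinct Casimir eigenspaces forces the parameters to agree and orthogonality of distinct $K$-types forces $e=d$; the choice of parameter $-\wbar{\mu}$ for $\wtilde{\Phi}^d$ is precisely what keeps it in the same eigenspace $\mathcal{A}_\mu$ as $\Phi^d$ — for the $\mu$ of Theorem~\ref{thm:GDCusp} one checks $\lambda_1(\mu)\in\R$ and $\lambda_2(\mu)\in i\R$, so $\lambda_i(-\wbar{\mu})=\lambda_i(\mu)$ — making the pairing nondegenerate rather than forced to vanish by eigenvalue mismatch (and distinct from the contragredient, hence ``not the dual form''). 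The unusual normalization \eqref{eq:PhiNormalization} of $\Phi$ is then imposed so that this pairing is the identity up to the factor $2d+1$ already supplied by Peter--Weyl, converting biorthogonality plus completeness into the stated inversion.

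I expect the main obstacle to be the completeness step with its multiplicities: showing that the towers $\{\Phi^d\}_{d\ge d_0(\Phi)}$ generated by the minimal-weight basis $\mathcal{S}_3$ span each finite-dimensional $\mathcal{A}^d_\mu$ exactly, with neither gaps nor overcounting, across the several minimal weights $d_0\le d$ and Weyl translates of $\mu$ that can feed a fixed weight $d$. This is where the full double induction of section~\ref{sect:GoingUp} is needed, and where one must know that the $Y^a$-raising of a minimal form is realized faithfully by \eqref{eq:PhiFourierWhitt} — via multiplicity one, Theorem~\ref{thm:WhittMultOne} — rather than collapsing or generating spurious forms. The secondary difficulties are analytic: justifying absolute convergence and the interchange of the $\gamma$-, $v$-, $n$- and $d$-sums with the $g'$-integral under the moderate-growth hypothesis \eqref{eq:AbddCond}, and checking that the unfolding yields exactly the normalizing constant with no residual factors.
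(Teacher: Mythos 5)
Your overall skeleton — Peter--Weyl reduction giving the factor $2d+1$, completeness of the towers over $\mathcal{S}_3$ via the raising operators acting on minimal-weight forms, and biorthogonality plus the normalization \eqref{eq:PhiNormalization} — matches the paper's architecture. But the mechanism you propose for the crucial biorthogonality step does not work. You cannot evaluate the Gram pairings $\int_{\Gamma\backslash G}\Phi^e(g)\,\wbar{\trans{\wtilde{\Phi}^d(g)}}\,dg$ by ``unfolding the Fourier--Whittaker series \eqref{eq:PhiFourierWhitt} against the Fourier expansion of the conjugate form'': the sum in \eqref{eq:PhiFourierWhitt} runs over $(U(\Z)V)\backslash SL(2,\Z)$ together with $V$, which is not a set of coset representatives $H\backslash\Gamma$ for any subgroup $H\le\Gamma$, so there is no identity of the form $\sum_\gamma\int_{\Gamma\backslash G}=\int_{H\backslash G}$ available; and even formally the unfolded integral would not converge. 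This is the same obstruction as for the Petersson norm of a $GL(2)$ cusp form — the $L^2$ inner product of two cusp forms over $\Gamma\backslash G$ is simply not an unfoldable Rankin--Selberg integral, which is why no archimedean Whittaker-transform computation (Stade-type formula) appears anywhere in this part of the paper.

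What the paper does instead is purely Lie-algebraic. The pairing is \emph{evaluated} only at the minimal weight $d_0$, where the relevant coefficient space $\mathcal{V}^{d_0}_{\kappa,\chi}$ is one-dimensional, so \eqref{eq:PhiNormalization} can be \emph{imposed} by rescaling $\rho_\Phi$; no integral is ever computed. Orthonormality at every higher weight, i.e.\ \eqref{eq:SesquiActual}, is then propagated upward by the double induction of proposition \ref{prop:GoingUpInnerProd}: one verifies that the concrete sesquilinear forms \eqref{eq:ActualInnerProdDef} satisfy the axioms \eqref{eq:FormDef} (the adjoint relation $\innerprod{Y^a_\mu u,v}=\innerprod{u,\what{Y}^a_{-\wbar{\mu}}v}$ coming from \eqref{eq:AdjY}--\eqref{eq:AdjYmuDef}, and the projection property, which is exactly where the vanishing of $W^d$ on missed rows enters), and then the explicit raising formulas \eqref{eq:Plus1}--\eqref{eq:Plus2} together with their adjoints \eqref{eq:AdjPlus1}--\eqref{eq:AdjPlus2} force $\innerprod{\bu^{d}_{j},\bu^{d}_{j'}}=\bu^{d}_{j}\trans{(\bu^{d}_{j'})}$ inductively in $d$ and $j$. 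So the induction of section \ref{sect:GoingUp}, which you cite only for completeness, is in fact also the engine of the biorthogonality; replacing your unfolding step by this adjointness argument is the missing ingredient. Your remaining points (the $\lambda_i(-\wbar{\mu})=\lambda_i(\mu)$ check, the truncation $\wtilde{\Phi}^d\equiv 0$ below minimal weight, and the final Peter--Weyl trace computation) are consistent with the paper.
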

The equivalent statement for a vector-valued form $f\in\mathcal{A}^d$ is
\begin{align}
\label{eq:VectCuspSpectralExpand}
	f(g) = \sum_{\Phi\in\mathcal{S}_3} \int_{\Gamma\backslash G} f(g')\wbar{\trans{\wtilde{\Phi}^d(g')}}dg' \, \Phi^d(g).
\end{align}
This completes the spectral expansion.

Each vector-valued cusp form of $K$-type $\WigDMat{d}$ corresponds to $2d+1$ scalar-valued forms, and the multiplicities of a given $\Phi\in\mathcal{S}_3^{d_0*}$ in the vector-valued forms are essentially the rank of the matrices $\Sigma^d_\chi$, with the middle $2d_0-1$ rows removed if $d_0 \ge 2$.
By the types listed in theorem \ref{thm:GDCusp}, these multiplicities are
\begin{enumerate}
\item \(\displaystyle \piecewise{\frac{d}{2}+1 & d \text{ even},\\ \frac{d-1}{2} & d \text{ odd,}} \)
\item \(\displaystyle \floor{\tfrac{d+1}{2}} \)
\item \(\displaystyle \floor{\tfrac{d-d_0}{2}}+1 \qquad d\ge d_0. \)
\end{enumerate}

\section{Acknowledgements}
The author would like to thank Stephen D. Miller for his insight into the non-spherical Maass forms; in particular, Prof. Miller and the author independently and near-simultaneously arrived at forms of equation \eqref{eq:XjOp}.
The author would also like to thank James Cogdell and Joseph Hundley for their comments and helpful discussions along the way, and the referee for a careful reading of the paper.

\section{Background}
\subsection{The Lie algebras}
\label{sect:LieAlgebras}
The complexified Lie algebra of $G=PSL(3,\R)$ is $\frak{g}_\C=\frak{sl}(3)\otimes_\R \C$, the space of complex matrices of trace zero, and the complexified Lie algebra of $K=SO(3,\R)$ is $\frak{k}_\C$, the space of anti-symmetric matrices.
These matrices act as differential operators on smooth functions of $G$ by
\begin{align}
\label{eq:LieActDef}
	(Xf)(g) := \left. \frac{d}{dt} f(g\exp tX) \right|_{t=0}, \qquad (X+iY)f = Xf+iYf,
\end{align}
when the entries of $X$ and $Y$ are real.
We will not differentiate notationally between a matrix in the Lie algebra and the associated differential operator.
Note that the commutator of the differential operators associated to two such matrices is then given by the differential operator associated to the commutator of the matrices; that is,
\[ [X,Y] := X\circ Y-Y\circ X = XY-YX. \]

We take as our basis of $\frak{k}_\C$ the three matrices
\begin{align}
\label{eq:Kmats}
	K_{\pm 1} =&\Matrix{0&0&\mp 1\\0&0&-i\\ \pm 1&i&0}, & K_0 =& \sqrt{2}\Matrix{0&1&0\\-1&0&0\\0&0&0}.
\end{align}
In terms of the Z-Y-Z coordinates of section I.2.2, the associated differential operators acting on a function of $k(\alpha,\beta,\gamma)$, are
\begin{align}
\label{eq:Kop}
	K_{\pm 1} =& e^{\mp i\gamma}\paren{i \cot\beta \, \partial_\gamma \mp \partial_\beta-i\csc\beta \, \partial_\alpha}, & K_0 =& -\sqrt{2}\partial_\gamma.
\end{align}
On an entry of the Wigner-$\WigDName$ matrix, the $K_{\pm 1}$ operators increase or decrease the column \cite[section 3.8]{BiedLouck},
\begin{align}
\label{eq:K1actWD}
	K_{\pm 1} \WigD{d}{m'}{m} = \sqrt{d(d+1)-m(m\pm1)} \WigD{d}{m'}{m\pm 1}, \qquad \abs{m'},\abs{m} \le d,
\end{align}
and $K_0$ does not,
\begin{align}
\label{eq:K0actWD}
	K_0 \WigD{d}{m'}{m} = \sqrt{2}i m \WigD{d}{m'}{m}, \qquad \abs{m'},\abs{m} \le d.
\end{align}
The Laplacian on $K$ is given by
\begin{align}
	2\Delta_K =K_1 \circ K_{-1} +K_{-1} \circ K_1 - K_0 \circ K_0,
\end{align}
and the Wigner-$\WigDName$ matrix satisfies
\begin{align}
	\Delta_K \WigDMat{d} = d(d+1) \WigDMat{d}.
\end{align}

We extend to a basis for $\frak{g}_\C$ by adding the five matrices
\begin{align}
\label{eq:Xmats}
	X_{\pm 2} =& \Matrix{1&\pm i&0\\ \pm i&-1&0\\0&0&0}, & X_{\pm 1} =& -\Matrix{0&0&\pm 1\\0&0&i\\ \pm 1&i&0}, & X_0 =& -\frac{\sqrt{6}}{3}\Matrix{1&0&0\\0&1&0\\0&0&-2}.
\end{align}
Though the details of their construction are not used in this paper, these matrices are deliberately constructed so that
\begin{align}
\label{eq:XkalphaComm}
	k(\alpha,0,0) X_j k(-\alpha,0,0) = e^{-ij\alpha} X_j, \qquad \abs{j}\le 2,
\end{align}
and they are orthogonal to $\frak{k}_\C$ and have identical norm under the Killing form (up to a constant, the natural inner-product resulting from the inclusion $\frak{g}_\C \subset \C^9$):
\begin{equation}
\label{eq:XKKilling}
\begin{aligned}
	\sum_{i,j} [X_{k_1}]_{i,j} \wbar{[X_{k_2}]_{i,j}} =& 4 \delta_{k_1=k_2}, & \sum_{i,j} [X_{k_1}]_{i,j} \wbar{[K_{k_2}]_{i,j}} =& 0, \\
	\sum_{i,j} [K_{k_1}]_{i,j} \wbar{[K_{k_2}]_{i,j}} =& 4 \delta_{k_1=k_2},
\end{aligned}
\end{equation}
where $[X_k]_{i,j}$ means the entry at index $i,j$ of the matrix $X_k$ as in \eqref{eq:Xmats}, and similarly for $[K_k]_{i,j}$.
Such a choice makes the description \eqref{eq:XjOp} relatively nice.

For use in sections \ref{sect:XjOps} and \ref{sect:Xjaction}, we introduce the right $K$-invariant operators (which act on the left):
For smooth functions of $K$, define
\[ (\KLeft{j} f)(k) = \left.\frac{d}{dt} f(\exp(tK_j) k)\right|_{t=0}, \]
then we have \cite[section 3.8]{BiedLouck}
\begin{align}
\label{eq:KLeft}
	\KLeft{0} = -\sqrt{2}\partial_\alpha, \qquad \KLeft{\pm 1} = e^{\pm i\alpha}\paren{i\csc\beta\partial_\gamma-i\cot\beta\partial_\alpha\mp\partial_\beta}.
\end{align}
These perform the symmetric operation to the $K_j$ on an entries of the Wigner-$\WigDName$ matrix:
\begin{align}
\label{eq:KLeftactWD}
	\KLeft{\pm 1} \WigD{d}{m'}{m} = \sqrt{d(d+1)-m'(m'\mp1)} \WigD{d}{m'\mp 1}{m}, \; \KLeft{0} \WigD{d}{m'}{m} = \sqrt{2}i m' \WigD{d}{m'}{m}, \; \abs{m'},\abs{m} \le d.
\end{align}
And the $K$ Laplacian may also be written as
\begin{align}
\label{eq:DeltaKLeft}
	2\Delta_K = \KLeft{1} \circ \KLeft{-1}+\KLeft{-1} \circ \KLeft{1} - \KLeft{0} \circ \KLeft{0}.
\end{align}
We extend these operators to smooth functions on $G$ by the expressions \eqref{eq:KLeft}.

\subsection{The Casimir Operators}
\label{sect:Casimir}
The (normalized) Casimir operators are defined by
\begin{align}
\label{eq:CasimirDef}
	\Delta_1 = -\frac{1}{2}\sum_{i,j} E_{i,j} \circ E_{j,i}, \qquad \Delta_2 = \frac{1}{3}\sum_{i,j,k} E_{i,j} \circ E_{j,k}\circ E_{k,i}+\Delta_1,
\end{align}
where $E_{i,j}$, $i,j\in\set{1,2,3}$ is the element of the Lie algebra of $GL(3,\R)$ whose matrix has a 1 at position $i,j$ and zeros elsewhere, using the standard indexing.
In the Lie algebra, these are sometimes called the Capelli elements, and we require their expressions as differential operators.
For operators of this complexity, this is typically done on a computer, but in appendix \ref{app:CasCompute}, we give sufficient details that the computation could (but likely shouldn't) be carried out by hand.

On functions of $G/K$ these become \cite[eqs (6.1.1)]{Gold01}
\begin{align}
	\Delta_1^\circ &= -y_1^2\partial_{y_1}^2-y_2^2\partial_{y_2}^2+y_1y_2\partial_{y_1}\partial_{y_2}-y_1^2(x_2^2+y_2^2)\partial_{x_3}^2 -y_1^2\partial_{x_1}^2-y_2^2\partial_{x_2}^2-2y_1^2x_2\partial_{x_1} \partial_{x_3}, \\
	\Delta_2^\circ =& -y_1^2y_2\partial_{y_1}^2 \partial_{y_2}+y_1y_2^2\partial_{y_1} \partial_{y_2}^2-y_1^3y_2\partial_{x_3}^2\partial_{y_1}+y_1y_2^2\partial_{x_2}^2 \partial_{y_1}-2y_1^2y_2x_2\partial_{x_1}\partial_{x_3}\partial_{y_2} \\
	& +(-x_2^2+y_2^2)y_1^2y_2\partial_{x_3}^2\partial_{y_2}-y_1^2y_2\partial_{x_1}^2 \partial_{y_2}+2y_1^2y_2^2\partial_{x_1}\partial_{x_2}\partial_{x_3} +2y_1^2y_2^2x_2\partial_{x_2} \partial_{x_3}^2 \nonumber \\
	& +y_1^2\partial_{y_1}^2-y_2^2\partial_{y_2}^2+2y_1^2x_2\partial_{x_1}\partial_{x_3} +(x_2^2+y_2^2)y_1^2\partial_{x_3}^2+y_1^2\partial_{x_1}^2-y_2^2\partial_{x_2}^2. \nonumber
\end{align}
We will require expressions for the full operators, i.e. those acting on functions of $G$.

We define the operators
\begin{equation}
\label{eq:ZDef}
\begin{aligned}
	Z_{\pm 2} =& \paren{2y_2 \partial_{y_2}-y_1 \partial_{y_1}} \pm 2 i y_2 \partial_{x_2}, \\
	Z_{\pm 1} =& -2i y_1(\partial_{x_1}+x_2 \partial_{x_3}) \mp 2 y_1 y_2 \partial_{x_3}, \\
	Z_0 =& -\sqrt{6} y_1 \partial_{y_1},
\end{aligned}
\end{equation}
then the full Casimir operators on $G$ are given by the following lemma.
\begin{lem}
\label{lem:CasCompute}
Acting on functions of the Iwasawa coordinates $x_1, x_2, x_3, y_1, y_2, \alpha, \beta, \gamma$, the Casimir operators have the form
\begin{align}
\label{eq:FullDelta1}
	8\Delta_1 =& 8\Delta_1^\circ-2\KLeft{1} \circ Z_{-1} - \sqrt{2}i \KLeft{0} \circ \paren{Z_2-Z_{-2}}-2\KLeft{-1} \circ Z_1, \\
\label{eq:FullDelta2}
	96\Delta_2 =& 96\Delta_2^\circ +2\KLeft{1} \circ \mathcal{T}_1+\sqrt{2}i\KLeft{0}\circ \mathcal{T}_0+2\KLeft{-1} \circ \mathcal{T}_{-1} \\
	& \qquad +6\sqrt{2} i \paren{\KLeft{1}+\KLeft{-1}} \circ K_0 \circ \paren{Z_{-1}-Z_1}, \nonumber
\end{align}
where
\begin{align*}
	\mathcal{T}_{\pm 1}=& \sqrt{6}Z_{\mp1}\circ Z_0+6\paren{1-Z_{\mp2}} \circ Z_{\pm1}, \\
	\mathcal{T}_0=& 3\paren{Z_1\circ Z_1-Z_{-1}\circ Z_{-1}}+2\paren{\sqrt{6} Z_0+6} \circ \paren{Z_{-2}-Z_2}.
\end{align*}
\end{lem}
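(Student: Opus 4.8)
The statement is a Lie-algebraic identity: I must verify that the two Casimir operators $\Delta_1$ and $\Delta_2$, which are defined abstractly via the $E_{i,j}$ matrices in \eqref{eq:CasimirDef}, have the stated explicit form when written in the Iwasawa coordinates $x_1,x_2,x_3,y_1,y_2,\alpha,\beta,\gamma$. The essential idea is to decompose the full operators into their ``$G/K$ part'' plus correction terms that involve the left-invariant $K$-operators $\KLeft{j}$ and the operators $Z_j$. Concretely, I would begin by rewriting each $E_{i,j}$ (an element of $\frak{gl}(3)$, acting as a right-invariant differential operator on $G$) in terms of the Iwasawa-adapted basis. The natural strategy is to express $E_{i,j}$ as a linear combination of the $X_k$ and $K_k$ matrices from \eqref{eq:Xmats} and \eqref{eq:Kmats}, since those are the operators whose action we understand. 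The $K_k$ part contributes $K$-derivatives, which via \eqref{eq:KLeft} become the $\KLeft{j}$ operators, and the $X_k$ part must be pushed through the Iwasawa decomposition to become the $Z_j$ operators plus further $K$-pieces.

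\textbf{Key steps.} First I would establish the dictionary: for each matrix $X\in\frak{g}_\C$, compute $(Xf)(xyk)$ for $f$ a smooth function of $G$, by differentiating $f(xyk\exp tX)$ at $t=0$ and re-expressing $k\exp(tX)$ in Iwasawa form. Because the $X_j$ were constructed to satisfy the equivariance \eqref{eq:XkalphaComm}, conjugating by $k(\alpha,0,0)$ is clean, and this is precisely what lets the $Z_j$ (which carry the $x,y$-derivatives) emerge with the correct $\alpha$-dependence absorbed into the $\KLeft{j}$. The upshot should be a set of relations expressing each $E_{i,j}\circ E_{j',i'}$ (and the triple products for $\Delta_2$) in terms of compositions of $Z_j$'s and $\KLeft{j}$'s. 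Second, I would substitute these into \eqref{eq:CasimirDef}, collect terms, and separate the purely $Z$-and-$x,y$-derivative part — which must reproduce $\Delta_1^\circ$ and $\Delta_2^\circ$ on the $K$-invariant functions by the known formulas of \cite[eqs (6.1.1)]{Gold01} — from the genuinely $K$-dependent corrections. Verifying that the $\Delta_i^\circ$ pieces come out correctly is a useful internal consistency check, since restricting to right-$K$-invariant functions kills all the $\KLeft{j}$ terms and must recover Goldfeld's formulas.

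\textbf{Main obstacle.} The hard part is unambiguously the sheer combinatorial bulk of the $\Delta_2$ computation: it is a sum over triple products $E_{i,j}\circ E_{j,k}\circ E_{k,i}$, each factor a first-order operator with coordinate-dependent coefficients, so that ordering, the commutators coming from $[X,Y]=XY-YX$, and the non-commutativity of the $\KLeft{j}$ with the $Z_j$ all contribute. Getting the $6\sqrt{2}i(\KLeft{1}+\KLeft{-1})\circ K_0\circ(Z_{-1}-Z_1)$ cross-term right, and confirming the precise combinations $\mathcal{T}_{\pm1}$ and $\mathcal{T}_0$, requires careful bookkeeping of how $K_0$ and the $Z_j$ interleave — the paper itself flags that this is ``typically done on a computer.'' My plan would be to organize the calculation by the net weight under the $X_0$-grading \eqref{eq:XkalphaComm} (equivalently, by how each term shifts the $\alpha$-dependence), since terms of different total weight cannot cancel or combine; this slices the giant sum into weight-graded pieces that can be checked independently, and the symmetry between the $+1$ and $-1$ weight sectors then cuts the work roughly in half. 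The remaining routine-but-lengthy verification — that each graded piece matches the claimed coefficients — I would relegate to the appendix, as the paper indicates it does in appendix~\ref{app:CasCompute}.
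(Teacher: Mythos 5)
Your setup coincides with the paper's: appendix \ref{app:CasCompute} likewise begins by inverting the Killing-form orthonormality \eqref{eq:XKKilling} to write $4E_{i,j}=\sum_k \wbar{[X_k]_{i,j}}X_k+\sum_k\wbar{[K_{-k}]_{i,j}}K_k$ (this is \eqref{eq:EijToXK}), and then pushes the $X_j$ through the Iwasawa decomposition via \eqref{eq:XjOp}, $X_j=\sum_\ell \WigD{2}{\ell}{j}(k)\wtilde{Z}_\ell$, exactly as you propose. Where you diverge is in how the resulting sums are tamed. The paper's engine is a pair of orthogonality identities: summing over $i,j$ in \eqref{eq:CasimirDef} first collapses the quadratic Casimir to $-8\Delta_1=\sum_j(-1)^jX_j\circ X_{-j}+2\Delta_K$, and then the unitarity of the Wigner $\WigDName$-matrix in the form \eqref{eq:WDSymOrtho}, together with its cubic analogue \eqref{eq:TripWDCGOrtho} for Wigner $3j$-symbols, kills every explicit $k$-dependent coefficient $\WigD{2}{\ell_1}{j_1}(k)\WigD{2}{\ell_2}{j_2}(k)\cdots$ in one stroke, leaving only the constant-coefficient $\wtilde{Z}$-monomials with $\ell_1+\ell_2(+\ell_3)=0$ plus the lower-order terms coming from $\wtilde{Z}_{\ell_1}$ differentiating $\WigD{2}{\ell_2}{j_2}(k)$ (equations \eqref{eq:tildeZactWDpm2}--\eqref{eq:tildeZactWD0}). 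Your substitute for this, the grading by the $X_0$-weight of \eqref{eq:XkalphaComm}, is true but does not do the same work: every summand $E_{i,j}\circ E_{j,k}\circ E_{k,i}$ of the Casimir already has total adjoint weight zero, so the grading does not slice the sum over the column indices $j_i$ at all; it only organizes the surviving $\wtilde{Z}$-monomials \emph{after} the $j$-sums have been collapsed, and it is precisely that collapse which you have not supplied a mechanism for. Without it, your ``routine-but-lengthy verification'' is the raw computer-algebra computation the paper describes just before the appendix (expand every $E_{i,j}$ as a first-order operator in all eight coordinates and let the machine collect terms), which certainly proves the lemma but is not a proof one can check by hand; the consistency check you propose (recovering $\Delta_i^\circ$ on right-$K$-invariant functions) is sound and is implicitly how the paper isolates the $\Delta_i^\circ$ pieces. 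In short: same decomposition, but you are missing the Wigner/Clebsch--Gordan orthogonality step that is the actual content of the appendix proof; I would either add \eqref{eq:WDSymOrtho} and \eqref{eq:TripWDCGOrtho} to your plan or accept that you are describing the computer verification rather than the structured argument.
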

Given a computer algebra package (and a sufficiently fast computer), one can compute these expressions by computing the differential operator for each $E_{i,j}$ acting on
\[ f=f(x_1,x_2,x_3,y_1,y_2,\alpha,\beta,\gamma) \]
from the Iwasawa decomposition (as in section I.2.4) for
\[ xyk\exp(t E_{i,j}) \approx xyk(I+t E_{i,j}), \qquad \text{($t$ small)} \]
(one can simplify this process using \eqref{eq:LieAlgTrick} or \eqref{eq:EijToXK}), then forming $\Delta_i f$ directly from \eqref{eq:CasimirDef} and having the computer algebra package collect terms according to the derivatives of $f$.
We give a somewhat more explicit proof in appendix \ref{app:CasCompute}.

\subsection{The projection operators}
\label{sect:KProjOps}
Let $\wtilde{C}(G,\WigDMat{d})$ be the space of smooth functions on $G$ that lie in the span of the entries of $\WigDMat{d}$.
That is $f \in \wtilde{C}(G,\WigDMat{d})$ if and only if
\[ f(xyk) = \sum_{\abs{m'},\abs{m} \le d} f_{m',m}(xy) \WigD{d}{m'}{m}(k), \]
for some collection of functions $f_{m',m}:G\to\C$.
For $f \in \wtilde{C}(G,\WigDMat{d})$, taking the expansion (I.2.10) of $f_g(k) := f(gk)$ into Wigner $\WigDName$-functions and evaluating at the identity $k=I$ gives
\[ f(g) = (2d+1) \sum_{\abs{m} \le d} \int_K f(g k') \wbar{\WigD{d}{m}{m}(k')} dk' = \sum_{\abs{m} \le d} (\tildeWigP{d}{m} f)(g), \]
where
\begin{align}
\label{eq:tildePDef}
	(\tildeWigP{d}{m} f)(g) = (2d+1) \int_K f(gk) \wbar{\WigD{d}{m}{m}(k)} dk
\end{align}
is the projection onto the span of the $m$-th column of $\WigDMat{d}$.

The projection operators may also be written as
\[ \tildeWigP{d}{m} f(xyk) = (2d+1) \sum_{\abs{m'} \le d} \int_K f(xyk') \wbar{\WigD{d}{m'}{m}(k')} dk' \, \WigD{d}{m'}{m}(k). \]
Also let
\[ \tildeWigPd{d} = \sum_{\abs{m} \le d} \tildeWigP{d}{m}. \]
Note that the projection operators \eqref{eq:tildePDef} commute with the differential operators $\Delta_1$ and $\Delta_2$ by translation invariance, but the projection onto the span of a single Wigner function $\WigD{d}{m'}{m}$ might not.

Now if $f \in \wtilde{C}(G,\WigDMat{d})$, we may write $f$ in terms of row vector-valued functions which transform by $\WigDMat{d}$,
\[ f(xyk) =  \sum_{\abs{\ell} \le d} f_{\ell,\ell}(xyk), \qquad f_\ell(xyk) := (2d+1) \int_K f(xyk k') \WigDRow{d}{\ell}\paren{{k'}^{-1}} dk' = f_\ell(xy) \WigDMat{d}(k) \]
(here $f_{\ell,\ell}$ is the entry at index $\ell$ of the vector-valued function $f_\ell$), so we may move to considering vector-valued functions, say $C(G,\WigDMat{d})$ is the space of smooth vector-valued functions $f(xyk) = f(xy) \WigDMat{d}(k)$.
Define the vector projection operators $\WigP{d}{m}:\wtilde{C}(G,\WigDMat{d}) \to C(G,\WigDMat{d})$ by
\[ (\WigP{d}{m} f)(g) = (2d+1) \int_K f(g k') \WigDRow{d}{m}\paren{{k'}^{-1}} dk'. \]

Note that we have an isomorphism $\wtilde{C}(G,\WigDMat{d}) \cong C(G,\WigDMat{d})^{2d+1}$ as each scalar-valued function gives rise to one vector-valued function for each row of $\WigDMat{d}$.
These functions are inherently linear combinations of the rows of $\WigDMat{d}(k)$,
\[ (f_{-d}(xyk), \ldots, f_d(xyk)) = \sum_{\abs{m'} \le d} f_{m'}(xy) \WigDRow{d}{m'}(k), \]
and the entries may be written as
\[ f_{m}(xyk) = \sum_{\abs{m'} \le d} f_{m'}(xy) \WigD{d}{m'}{m}(k). \]
The raising and lowering operators on $K$ then give
\begin{align}
\label{eq:Kpm1f}
	K_{\pm 1} f_{m}(xyk) = \sqrt{d(d+1)-m(m\pm1)} f_{m\pm 1}(xyk),
\end{align}
using $f_m=0$ when $\abs{m} > d$.
The operators $\tildeWigP{d}{\ell}$ and $K_{\pm 1}$ have simple commutation relations,
\begin{align}
	\tildeWigP{d}{\ell} K_{\pm 1} = K_{\pm 1} \tildeWigP{d}{\ell\mp 1},
\end{align}
and this gives
\begin{align}
\label{eq:KPComm}
	\WigP{d}{\ell} K_{\pm 1} = \sqrt{d(d+1)-\ell(\ell\pm1)} \WigP{d}{\ell\mp 1}.
\end{align}

\subsection{Clebsch-Gordan coefficents}
\label{sect:CG}
It is a fact from the theory of the Wigner-$\WigDName$ matrices that any product $\WigD{a}{j}{\ell} \WigD{d}{m'}{m}$ lies in the span of $\set{\WigD{d+b}{m'+j}{m+\ell}\setdiv\abs{b}\le a}$.
More precisely, we have \cite[eq. (3.189)]{BiedLouck},
\begin{align}
\label{eq:CGExpand}
	\WigD{k}{j}{i} \WigD{d}{m'}{m} = \sum_{\abs{a} \le k} \CG{d,k,a}{m',j} \CG{d,k,a}{m,i} \WigD{d+a}{m'+j}{m+i},
\end{align}
using the Clebsch-Gordan coefficients,
\[ \CG{d,k,a}{m,i} = \left<k \, i \, d \, m \middle| (d+a) \, (i+m) \right>, \]
(we prefer the shorthand $\CG{d,k,a}{m,i}$ over the standard notation $\left<\ldots\middle|\ldots\right>$ in the interests of compactness) or the Wigner three-$j$ symbols via
\begin{align}
\label{eq:CGtoWig3j}
	\Matrix{j_1&j_2&j_3\\m_1&m_2&-m_3} = \frac{(-1)^{j_1-j_2+m_3}}{\sqrt{2j_3+1}} \left<j_1 \, m_1 \, j_2 \, m_2 \middle| j_3 \, m_3 \right>.
\end{align}
The coefficients are defined to be zero unless
\begin{align}
	\abs{m} \le d, \abs{i} \le k, \abs{m+i}\le d+a, \abs{d-k} \le d+a \le d+k.
\end{align}

One particular symmetry relation is \cite[eq. 34.3.10]{DLMF},
\begin{align}
\label{eq:CGsymm}
	\CG{d,k,a}{m,i} = (-1)^{k-a} \CG{d,k,a}{-m,-i}.
\end{align}

Because we only use $k=2$ and $k=1$, these may be found in Abramowitz and Stegun \cite[tables 27.9.2 \& 4]{AbramStegun}, and we list the relevant values here:
Set
\[ \frak{C}^{d,k,a}_{m,i} = \frac{\sqrt{(2d+2a+1)\,(2d+a-k)!}}{\sqrt{(k-i)!\,(k+i)!\,(2d+a+k+1)!}} \times \piecewise{\frac{(-1)^i\sqrt{(d-m)!\,(d+m)!}}{\sqrt{(d+a-i-m)!\,(d+a+i+m)!}} & \If a \le 0, \\[10pt] \frac{\sqrt{(d+a-i-m)!\,(d+a+i+m)!}}{\sqrt{(d-m)!\,(d+m)!}} & \Otherwise,} \]
then
\begin{align}
\label{eq:CG2pm2}
	\CG{d,2,-2}{m,i} =& 2\sqrt{6} \frak{C}^{d,2,-2}_{m,i}, & \CG{d,2, 2}{m,i} =& 2\sqrt{6} \frak{C}^{d,2,2}_{m,i}, \\
\label{eq:CG2pm1}
	\CG{d,2,-1}{m,i} =& 2\sqrt{6} (i(d+1)+2m) \frak{C}^{d,2,-1}_{m,i}, & \CG{d,2, 1}{m,i} =& 2\sqrt{6} (di-2m) \frak{C}^{d,2,1}_{m,i},
\end{align}
\begin{align}
\label{eq:CG20}
	\CG{d,2, 0}{m,i} =& 2(2 d^2 (i^2-1)+d (5 i^2+6 i m-2)+3(i + m) (i + 2 m)) \frak{C}^{d,2,0}_{m,i},
\end{align}
\begin{align}
\label{eq:CG1}
	\CG{d,1,-1}{m,i} =& -\sqrt{2} \frak{C}^{d,1,-1}_{m,i}, & \CG{d,1,0}{m,i} =& -2(i(d+1)+m) \frak{C}^{d,1,1}_{m,i}, & \CG{d,1,1}{m,i} =& \sqrt{2} \frak{C}^{d,1,1}_{m,i}.
\end{align}

\subsection{The commutation relations}
\label{sect:CommRels}
We record the commutation relations amongst the differential operators.
This is most easily expressed in terms of the Clebsch-Gordan coefficients:
\begin{align}
	[X_j,X_k] =& 2\sqrt{5}i^{1-j-k}\CG{2,2,-1}{j,k} K_{j+k}, & [X_j,K_k] =&\sqrt{12}i^{1+k}\CG{2,1,0}{j,k} X_{j+k}, \\
	[K_j,K_k] =& 2i\CG{1,1,0}{j,k} K_{j+k},
\end{align}
and the Clebsch-Gordon coefficients, as matrices $\CG{d,k,a}{}$, are
\begin{align*}
	\CG{2,2,-1}{} =& \frac{1}{10}\Matrix{0&0&0&2\sqrt{5}&2\sqrt{10}\\0&0&-\sqrt{30}&-\sqrt{10}&2\sqrt{5}\\0&\sqrt{30}&0&-\sqrt{30}&0\\-2\sqrt{5}&\sqrt{10}&\sqrt{30}&0&0\\-2\sqrt{10}&-2\sqrt{5}&0&0&0} &
	\CG{2,1,0}{} =& \frac{1}{6} \Matrix{0&2\sqrt{6}&2\sqrt{3}\\-2\sqrt{3}&\sqrt{6}&3\sqrt{2}\\-3\sqrt{2}&0&3\sqrt{2}\\-3\sqrt{2}&-\sqrt{6}&2\sqrt{3}\\-2\sqrt{3}&-2\sqrt{6}&0} \\
	\CG{1,1,0}{} =& \frac{1}{2}\Matrix{0&\sqrt{2}&\sqrt{2}\\-\sqrt{2}&0&\sqrt{2}\\-\sqrt{2}&-\sqrt{2}&0}
\end{align*}

\subsection{Barnes integrals}
We will make use of Barnes' Second Lemma \cite[section 6]{Barnes02}:
For $a,b,c,d,e\in\C$,
\begin{align}
\label{eq:BarnesSecond}
	& \int_{-i\infty}^{i\infty} \frac{\Gamma(a+s)\Gamma(b+s)\Gamma(c+s)\Gamma(d-s)\Gamma(e-s)}{\Gamma(f+s)} \frac{ds}{2\pi i} \\
	&\qquad = \frac{\Gamma(a+e)\Gamma(b+e)\Gamma(c+e)\Gamma(a+d)\Gamma(b+d)\Gamma(c+d)}{\Gamma(f-a)\Gamma(f-b)\Gamma(f-c)}, \nonumber
\end{align}
where $f=a+b+c+d+e$.

\subsection{The Weyl group and the $V$ group}
\label{sect:WeylV}
In section \ref{sect:GoingDown}, we will make extensive use of the Weyl and $V$ group elements and their images under the Wigner $\WigDName$-matrices directly, and in preparation, we give a few identities that will smooth out the analysis there.
First, we note that the Weyl group is generated by the transpositions $w_2$ and $w_3$:
\begin{align}
	w_4=w_3w_2, \qquad w_5=w_2w_3, \qquad w_l=w_2w_3w_2=w_3w_2w_3.
\end{align}
Next, we investigate the commutation relations between $W$ and $V$:
\begin{equation}
\label{eq:wvwinv}
\begin{aligned}
	w_2 v_{\varepsilon_1,\varepsilon_2} w_2 &= v_{\varepsilon_1 \varepsilon_2,\varepsilon_2}, & w_3 v_{\varepsilon_1,\varepsilon_2} w_3 =& v_{\varepsilon_1, \varepsilon_1 \varepsilon_2}, & w_4 v_{\varepsilon_1,\varepsilon_2} w_5 &= v_{\varepsilon_1 \varepsilon_2,\varepsilon_1}, \\
	w_5 v_{\varepsilon_1,\varepsilon_2} w_4 &= v_{\varepsilon_2,\varepsilon_1 \varepsilon_2}, & w_l v_{\varepsilon_1,\varepsilon_2} w_l =& v_{\varepsilon_2, \varepsilon_1}.
\end{aligned}
\end{equation}

Now we need to see how the matrices $\WigDMat{d}(w_2)$ and $\WigDMat{d}(v)$, $v\in V$ interact with the rows of the $\Sigma^d_\chi$ matrices, which are the vectors $\bu^{d,\pm}_m$ as in \eqref{eq:udef}.
From \eqref{eq:WigDv}, we can see that
\begin{align}
\label{eq:buDv}
	\bu^{d,+}_m \WigDMat{d}(v_{\varepsilon_1,\varepsilon_2}) =& \varepsilon_1^j \bu^{d,+}_m, & \bu^{d,-}_m \WigDMat{d}(v_{\varepsilon_1,\varepsilon_2}) =& \varepsilon_1^j \varepsilon_2 \bu^{d,-}_m,
\end{align}
or equivalently,
\begin{align}
\label{eq:buDv2}
	\bu^{d,\varepsilon}_m \WigDMat{d}(\vpmpm{-+}) =& (-1)^j \bu^{d,\varepsilon}_m, & \bu^{d,\varepsilon}_m \WigDMat{d}(\vpmpm{+-}) =& \varepsilon \bu^{d,\varepsilon}_m.
\end{align}
From sections I.2.2.2 and I.2.2.3, we can see that
\begin{align}
\label{eq:w2toRi}
	\WigDMat{d}(w_2) =& \WigDMat{d}(\vpmpm{+-})\Dtildek{d}{i} = \Dtildek{d}{i} \WigDMat{d}(\vpmpm{--}),
\end{align}
and it follows that if $\varepsilon=(-1)^m$, then
\begin{align}
\label{eq:buSignReduct}
	\bu^{d,\pm}_m \Dtildek{d}{i} = i^{-m} \bu^{d,\pm\varepsilon}_m, \qquad \bu^{d,\pm}_m \WigDMat{d}(w_2) = \bu^{d,\pm}_m \Dtildek{d}{i} \WigDMat{d}(\vpmpm{--}) = \pm i^{-m} \bu^{d,\pm\varepsilon}_m.
\end{align}

\subsection{Wigner $\WigdName$-polynomials and Jacobi $P$-polynomials}
\label{sect:WignerdJacobiP}
The Wigner $\WigdName$-polynomials
\begin{align}
\label{eq:WigdDef}
	\Wigd{d}{m'}{m}(\cos\beta)=\WigD{d}{m'}{m}(k(0,\beta,0))
\end{align}
may be given in terms of Jacobi polynomials \cite[eq. (3.72)]{BiedLouck}
\begin{align}
\label{eq:WigdtoJacobi}
	\Wigd{d}{m'}{m}(x) = 2^{-m} \sqrt{\frac{(d+m)! \, (d-m)!}{(d+m')! \, (d-m')!}} \paren{1-x}^{\frac{m-m'}{2}} \paren{1+x}^{\frac{m+m'}{2}} \JacobiP{d-m}{m-m'}{m+m'}(x),
\end{align}
and they satisfy the symmetries \cite[eqs (3.80)-(3.82)]{BiedLouck}
\begin{align}
\label{eq:WigdSymms}
	\Wigd{d}{m'}{m}(x) = (-1)^{m'+m} \Wigd{d}{-m'}{-m}(x) = (-1)^{m'+m} \Wigd{d}{m}{m'}(x).
\end{align}
We will need the first two Jacobi polynomials; they are \cite[eq. 18.5.7]{DLMF}
\begin{align}
\label{eq:JacPBase}
	\JacobiP{0}{\alpha}{\beta}(x)=1, \qquad \JacobiP{1}{\alpha}{\beta}(x)=\frac{\alpha-\beta+x(2+\alpha+\beta)}{2}.
\end{align}

\section{The action of the $X_j$ operators}
\subsection{The $X_j$ as differential operators}
\label{sect:XjOps}
Starting from \eqref{eq:ZDef}, we define the differential operators
\begin{align}
\label{eq:tildeZDef}
	\wtilde{Z}_{\pm2} = Z_{\pm2} \mp i \tfrac{\sqrt{2}}{2}\KLeft{0}, \qquad \wtilde{Z}_{\pm 1} = Z_{\pm 1}-\KLeft{\pm1}, \qquad \wtilde{Z}_0=Z_0.
\end{align}

\begin{lem}
As differential operators in the Iwasawa $UYK$, i.e. $g=xyk$, coordinates, the $X_j$ operators have the form
\begin{align}
\label{eq:XjOp}
	X_j =& \sum_{\ell=-2}^2 \WigD{2}{\ell}{j}(k) \wtilde{Z}_\ell.
\end{align}
\end{lem}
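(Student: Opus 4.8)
The plan is to trade the right-translation action of $X_j$ for the adjoint action of $K$ on the five-dimensional space $\frak{p}_\C := \Span\set{X_{-2},\ldots,X_2}$, which by \eqref{eq:XKKilling} is the Killing-orthogonal complement of $\frak{k}_\C$ and is therefore $\mathrm{Ad}(K)$-stable. The first step is the equivariance identity
\[ \mathrm{Ad}(k)X_j = kX_jk^{-1} = \sum_{\ell=-2}^{2}\WigD{2}{\ell}{j}(k)\,X_\ell, \qquad k\in K. \]
Both sides are homomorphic in $k$ (the right side by the composition law for $\WigDMat{2}$), so the claim is that the two spin-two representations $\mathrm{Ad}|_{\frak{p}_\C}$ and $\WigDMat{2}$ of $K=SO(3,\R)$ coincide in the chosen basis. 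Since $K$ is connected it suffices to match the derived Lie-algebra representations on the generators $K_0,K_{\pm1}$ of $\frak{k}_\C$: the $K_0$-weights agree by \eqref{eq:XkalphaComm} (equivalently \eqref{eq:K0actWD}), since $\WigD{2}{\ell}{j}(k(\alpha,0,0)) = e^{-i\ell\alpha}\delta_{\ell=j}$, while the off-diagonal (raising) entries are fixed by the common normalization — forced on the $X_j$ by the equal-norm relations \eqref{eq:XKKilling} — and confirmed by comparing the commutators $[K_{\pm1},X_j]$ of section \ref{sect:CommRels} with the column-raising formula \eqref{eq:K1actWD}.

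Granting this, I write $g=xyk$ and conjugate $\exp(tX_j)$ past $k$:
\[ g\exp(tX_j) = xy\exp\paren{t\,\mathrm{Ad}(k)X_j}k = xy\exp\paren{t\sum_{\ell}\WigD{2}{\ell}{j}(k)X_\ell}k, \]
so differentiating at $t=0$ gives $X_j = \sum_\ell \WigD{2}{\ell}{j}(k)\,D_\ell$, where $D_\ell f(xyk) := \frac{d}{dt}f\paren{xy\exp(tX_\ell)k}\big|_{t=0}$ is the operator that inserts $X_\ell$ between the $Y$- and $K$-factors. Comparing with \eqref{eq:XjOp}, it remains only to identify $D_\ell = \wtilde{Z}_\ell$. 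To compute $D_\ell$ I Iwasawa-decompose $X_\ell = n_\ell + a_\ell + c_\ell$ at the Lie-algebra level, with $n_\ell\in\frak{n}$ (strictly upper triangular), $a_\ell\in\frak{a}$ (traceless diagonal) and $c_\ell\in\frak{k}$ (antisymmetric). Writing the Iwasawa decomposition $xy\exp(tX_\ell)=x(t)y(t)\kappa(t)$ to first order yields $\dot x(0)=\mathrm{Ad}(y)n_\ell$, $\dot y(0)=ya_\ell$ and $\dot\kappa(0)=c_\ell$; the $U$- and $Y$-parts contribute a differential operator in $x,y$, while $\kappa(t)=\exp(tc_\ell)+O(t^2)$ sits to the \emph{left} of $k$ and thus contributes the left-invariant field $c_\ell$ on the $K$-variables, i.e. a $\KLeft{}$-operator via the definition \eqref{eq:KLeft}.

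The $\frak{k}$-parts are read off directly from \eqref{eq:Xmats}--\eqref{eq:Kmats}: one finds $c_0=0$, $c_{\pm1}=-K_{\pm1}$ and $c_{\pm2}=\mp i\tfrac{\sqrt{2}}{2}K_0$, which produce precisely the corrections $-\KLeft{\pm1}$ and $\mp i\tfrac{\sqrt{2}}{2}\KLeft{0}$ distinguishing $\wtilde{Z}_\ell$ from $Z_\ell$ in \eqref{eq:tildeZDef}. The remaining task — verifying that the $U$- and $Y$-contribution of $D_\ell$, namely the image of $\mathrm{Ad}(y)n_\ell$ in the unitriangular $x$-chart together with the $a_\ell$-flow on $y$, reproduces the operators $Z_\ell$ of \eqref{eq:ZDef} — is the main obstacle, being the one genuinely computational step. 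It requires pushing the nilpotent part $n_\ell$ through $\mathrm{Ad}(y)$ (which rescales entries by ratios of the $y_i$) and then expressing the resulting left-invariant field in the coordinates $x_1,x_2,x_3$, where the group law generates the $x_2$-dependent combinations (such as $\partial_{x_1}+x_2\partial_{x_3}$) visible in $Z_{\pm1}$. As a consistency check, $\ell=0$ is transparent: $X_0$ is diagonal and traceless, so $n_0=c_0=0$ and only the $Y$-flow survives, giving $-\sqrt{6}\,y_1\partial_{y_1}=Z_0=\wtilde{Z}_0$ after the $PSL$-normalization. Assembling $X_j=\sum_\ell \WigD{2}{\ell}{j}(k)\,\wtilde{Z}_\ell$ then completes the proof.
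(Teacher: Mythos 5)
Your proof is correct and takes essentially the same route as the paper: the conjugation identity $kX_jk^{-1}=\sum_{\ell}\WigD{2}{\ell}{j}(k)X_\ell$, the insertion of $\mathrm{Ad}(k)X_j$ between the $Y$- and $K$-factors, and the Lie-algebra-level Iwasawa decomposition of each $X_\ell$ — your $n_\ell+a_\ell+c_\ell$ is exactly the paper's change to the basis $N_i,A_i,K_i$ in \eqref{eq:XtoIwaConv}, and your constants $c_{\pm 2}=\mp i\tfrac{\sqrt{2}}{2}K_0$, $c_{\pm 1}=-K_{\pm 1}$ are the correct ones matching \eqref{eq:tildeZDef}. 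The only departures are cosmetic: you verify the conjugation identity by differentiating at the identity and comparing the commutators of section \ref{sect:CommRels} with \eqref{eq:K1actWD}, where the paper checks it on the generators $k(\alpha,0,0)$ and $w_3$ via the explicit matrix $\WigDMat{2}(w_3)$, and the one step you describe but do not execute — that the $\frak{n}$- and $\frak{a}$-parts produce the $Z_\ell$ of \eqref{eq:ZDef} — is precisely the routine computation recorded in the paper's \eqref{eq:UYops}.
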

We have expressed, as we may, all of the trigonometric polynomials in $\alpha, \beta, \gamma$ in terms of the Wigner $\WigDName$-matrices in preparation for application of the Clebsch-Gordan multiplication rules, see section \ref{sect:CG}.
\begin{proof}
We have the relation \eqref{eq:XkalphaComm}, and it can be computed directly using 
\begin{align*}
	\WigDMat{2}(w_3) =& \frac{1}{4}\Matrix{
	1&2i&-\sqrt{6}&-2i&1\\
	-2i&2&0&2&2i\\
	-\sqrt{6}&0&-2&0&-\sqrt{6}\\
	2i&2&0&2&-2i\\
	1&-2i&-\sqrt{6}&2i&1}
\end{align*}
that
\[ w_3 X_j w_3 = \sum_{\abs{\ell} \le 2} \WigD{2}{\ell}{j}(w_3) X_\ell, \]
so from \eqref{eq:kabcDef} and \eqref{eq:WigDPrimary} it follows that for $k \in K$,
\begin{align}
\label{eq:SimpkXjkinv}
	k X_j k^{-1} = \sum_{\abs{\ell} \le 2} \WigD{2}{\ell}{j}(k) X_\ell.
\end{align}
This applies to the simple trick
\begin{align}
\label{eq:LieAlgTrick}
	(X_j f)\paren{xyk} &= \left.\frac{d}{dt} f\paren{xy \exp(t kX_jk^{-1})k}\right|_{t=0},
\end{align}
for a smooth function $f$ on $G$.
We switch to the Iwasawa-friendly basis
\[ N_1=E_{2,3}, \; N_2=E_{1,2}, \; N_3=E_{1,3}, \; 3A_1=E_{1,1}+E_{2,2}-2E_{3,3}, \; 3A_2=2E_{1,1}-E_{2,2}-E_{3,3}. \]
The conversion is
\begin{align}
\label{eq:XtoIwaConv}
	X_{\pm2}=\pm 2i N_2-A_1+\sqrt{2}A_2\mp iK_0, \quad X_{\pm1}=-2i N_1 \mp 2N_3-K_{\pm1}, \quad X_0=-\sqrt{6} A_1,
\end{align}
and we can compute directly from the definition \eqref{eq:LieActDef} that
\begin{align}
\label{eq:UYops}
	N_1=y_1\partial_{x_1}+y_1 x_2 \partial_{x_3}, \quad N_2= y_2\partial_{x_2}, \quad N_3 = y_1 y_2 \partial_{x_3}, \quad A_1=y_1 \partial_{y_1}, \quad A_2=y_2\partial_{y_2}
\end{align}
as differential operators on functions of $U(\R) Y^+$ alone.
For example, if we write $f(xy)=f(x_1,x_2,x_3,y_1,y_2)$ for a smooth function on $U(\R) Y^+$, the action of $N_1$ is computed by
\begin{align*}
	(N_1 f)(xy):=\left.\frac{d}{dt} f(x y \exp(t N_1)) \right|_{t=0}=\left.\frac{d}{dt} f(x_1+t y_1,x_2,x_3+t y_1 x_2,y_1, y_2) \right|_{t=0}.
\end{align*}

The lemma follows from applying \eqref{eq:SimpkXjkinv}, \eqref{eq:XtoIwaConv} and \eqref{eq:UYops} in \eqref{eq:LieAlgTrick}.
Note that the $N_j$ and $A_j$ operators are still acting on the right of the $U(\R)Y^+$ part of the Iwasawa decomposition, and this produces the $Z_j$ operators, but the $K_j$ operators are acting now \textit{on the left} of the $K$ part.
\end{proof}

\subsection{The action of $X_j$ on vector-valued functions}
\label{sect:Xjaction}
We wish to describe the effect of each operator $X_j$ on functions $f \in C(G,\WigDMat{d})$.
We may restrict our attention to $\tildeWigP{d+a}{\ell} X_j f$ with $\abs{a} \le 2, \abs{\ell} \le d+a$, by the explicit form of the $X_j$ in \eqref{eq:XjOp} and the fact that a product $\WigD{a}{i}{j} \WigD{d}{m'}{m}$ lies in the span of $\set{\WigD{d+b}{m'+i}{m+j}\setdiv\abs{b}\le a}$, as in \eqref{eq:CGExpand}.
Further, the components of $f$ may be obtained by applying $K_{\pm 1}$ repeatedly to the central entry $f_0$ using \eqref{eq:Kpm1f}, and we have already described the commutation relations of these operators with $\tildeWigPd{d+a}$ and $X_j$ in \eqref{eq:KPComm} and section \ref{sect:CommRels}, so we need only consider $\tildeWigP{d+a}{\ell} X_j f_0$.
Lastly, by \eqref{eq:XjOp} and \eqref{eq:CGExpand} only the $j$-th column of $\WigDMat{d+a}$ is involved in the operator $X_j f_0$, so it is sufficient to consider $\tildeWigP{d+a}{j} X_j f_0$ for $\abs{a} \le 2$.
This gives, in principle, 25 operators to consider, but up to applying $K_{\pm 1}$ to the result, we have only the following five operators:
Define the vector
\[ \CGB{d}{} = \paren{\CGB{d}{-2}, \ldots, \CGB{d}{2}} := \tfrac{\sqrt{6}}{3}\paren{-2(d+1),-(d+3),-3,(d-2),2d}, \]
and for $\abs{a},\abs{j} \le 2$, $\abs{m'} \le d$, set
\begin{align}
\label{eq:YtildeInitDef}
	\wtilde{Y}^{d,a}_{m',j} = \CG{d,2,a}{m'-j,j} \times \piecewise{Z_{-2}-m'-2 & \If j=-2, \\ Z_{-1} & \If j=-1, \\ Z_0-\CGB{d}{a} & \If j=0, \\ Z_1 & \If j=1, \\ Z_2+m'-2 & \If j=2.}
\end{align}
Then we define the operator on $\wtilde{Y}^{d,a}$ on smooth, scalar-valued functions $f\in C^\infty(G)$ by
\begin{align}
\label{eq:YtildeInitDef2}
	(\wtilde{Y}^{d,a} f)(xyk) = \sum_{m'=-d}^d \WigD{d+a}{m'}{0}(k) \sum_{j=-2}^2 \wtilde{Y}^{d,a}_{m',j} f_{m'-j}(xy),
\end{align}
where
\begin{align}
\label{eq:YtildeInitDef3}
	\sum_{m'=-d}^d \WigD{d}{m'}{0}(k) f_{m'}(xy) = (\tildeWigP{d}{0} f)(xyk).
\end{align}
Here, and throughout, we define $f_{m'}=0$ for $\abs{m'} > d$.

\begin{prop}
\label{prop:XjAction}
	Suppose $f \in \wtilde{C}(G,\WigDMat{d})$, then we have
	\begin{align*}
		\tildeWigP{d+a}{0} X_0 f =& \CG{d,2,a}{0,0} \wtilde{Y}^{d,a} f, \\
		\tildeWigP{d+a}{\pm 1} X_{\pm 1} f =& \tfrac{1}{\sqrt{(d+a)(d+a+1)}}\CG{d,2,a}{0,\pm 1} K_{\pm 1} \wtilde{Y}^{d,a} f, \\
		\tildeWigP{d+a}{\pm 2} X_{\pm 2} f =& \tfrac{1}{\sqrt{(d+a)(d+a+1)}\sqrt{(d+a)(d+a+1)-2}} \CG{d,2,a}{0,\pm 2} K_{\pm 1} K_{\pm 1} \wtilde{Y}^{d,a} f.
	\end{align*}
\end{prop}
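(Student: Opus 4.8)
The plan is to prove the $X_0$ identity by a direct computation and then obtain the four identities for $X_{\pm1},X_{\pm2}$ from it by a column-raising argument, so that essentially no new work is needed for the latter. By the reduction preceding the statement it suffices to apply $X_j$ to the central column $\tildeWigP{d}{0}f=\sum_{m'}f_{m'}(xy)\WigD{d}{m'}{0}(k)$, since in \eqref{eq:XjOp}--\eqref{eq:CGExpand} the factor $\WigD{2}{\ell}{j}$ raises the column index by $j$ while the outer projection $\tildeWigP{d+a}{j}$ retains only the part of $f$ in column $0$. I would first expand $X_0\tildeWigP{d}{0}f=\sum_{\ell}\WigD{2}{\ell}{0}(k)\,\wtilde{Z}_\ell\big(\sum_{m'}f_{m'}\WigD{d}{m'}{0}\big)$ and compute each $\wtilde{Z}_\ell[f_{m'}\WigD{d}{m'}{0}]$ by splitting $\wtilde{Z}_\ell$, as in \eqref{eq:tildeZDef}, into its right-acting $Z_\ell$ part (which differentiates $f_{m'}$ in the $xy$-variables) and its left-acting part built from $\KLeft{0},\KLeft{\pm1}$ (which acts on $\WigD{d}{m'}{0}$ via \eqref{eq:KLeftactWD}). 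The $\KLeft{0}$ inside $\wtilde{Z}_{\pm2}$ is diagonal and produces $\pm m'$, whereas the $\KLeft{\pm1}$ inside $\wtilde{Z}_{\pm1}$ shifts $m'\mapsto m'\mp1$ with weight $\sqrt{d(d+1)-m'(m'\mp1)}$.

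Next I would multiply by $\WigD{2}{\ell}{0}(k)$, expand each product through \eqref{eq:CGExpand}, apply $\tildeWigP{d+a}{0}$, and collect the coefficient of each $\WigD{d+a}{M}{0}(k)$; the second Clebsch-Gordan factor $\CG{d,2,a}{0,0}$ is common to all terms and pulls out front, matching the prefactor in the claim. Tracking indices, I expect the $Z_{\pm2}$ pieces together with the $\KLeft{0}$ contributions to assemble exactly into the $j=\pm2$ summands of $\wtilde{Y}^{d,a}$ in \eqref{eq:YtildeInitDef}, the $Z_{\pm1}$ pieces into the $j=\pm1$ summands, and $Z_0$ into the $Z_0$ part of the $j=0$ summand. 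Each matches termwise once one checks that the row index $m'+\ell=M$ produced by the Clebsch-Gordan expansion agrees with the function label $f_{M-j}$ attached to the corresponding summand.

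The one place where the matching is not immediate---and the main obstacle---is the diagonal $f_M$ term. Both $\KLeft{\pm1}$ contributions land on $f_M$ and must together reproduce the $-\CGB{d}{a}$ shift in the $j=0$ operator $\CG{d,2,a}{M,0}(Z_0-\CGB{d}{a})$. This reduces to the Clebsch-Gordan identity
\begin{align*}
\sqrt{d(d+1)-M(M-1)}\,\CG{d,2,a}{M-1,1}+\sqrt{d(d+1)-M(M+1)}\,\CG{d,2,a}{M+1,-1}=\CGB{d}{a}\,\CG{d,2,a}{M,0},
\end{align*}
which I would verify for each $a\in\set{-2,-1,0,1,2}$ by inserting the explicit values \eqref{eq:CG2pm1} and \eqref{eq:CG20}. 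Since $\CGB{d}{a}$ was defined precisely so that this holds, this is a finite but genuinely computational check, and it is the crux of the argument.

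Finally, the $X_{\pm1},X_{\pm2}$ identities follow from the $X_0$ case without redoing the computation. Replacing $X_0$ by $X_j$ changes only the second Clebsch-Gordan factor, from $\CG{d,2,a}{0,0}$ to $\CG{d,2,a}{0,j}$, and the output column from $0$ to $j$; the first factor $\CG{d,2,a}{m',\ell}$ and the entire internal $Z_\ell$/$\KLeft{}$ structure are unchanged, so the result is the column-$j$ analogue of $\CG{d,2,a}{0,j}\,\wtilde{Y}^{d,a}f$. Because $K_{\pm1}$ acts on the $K$-variables it commutes with every $Z_\ell$, so applying it to the column-$0$ object $\wtilde{Y}^{d,a}f$ merely raises the column index via \eqref{eq:K1actWD}, contributing $\sqrt{(d+a)(d+a+1)}$ at the first step and $\sqrt{(d+a)(d+a+1)-2}$ at the second. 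Dividing by these normalizations converts $\wtilde{Y}^{d,a}f$ into its column-$j$ version, which is exactly the stated $K_{\pm1}$ and $K_{\pm1}K_{\pm1}$ form.
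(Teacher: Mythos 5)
Your proposal is correct and follows essentially the same route as the paper: expand $X_j$ via \eqref{eq:XjOp}, apply \eqref{eq:KLeftactWD} and the Clebsch--Gordan expansion \eqref{eq:CGExpand}, reindex, and reduce everything to the identity $\sum_\pm \CG{d,2,a}{m'\mp1,\pm1}\sqrt{d(d+1)-m'(m'\mp1)}=\CGB{d}{a}\,\CG{d,2,a}{m',0}$, which is exactly the crux identity the paper verifies from \eqref{eq:CG2pm2}--\eqref{eq:CG1}. The only (cosmetic) difference is that you treat $j=0$ first and lift to $j=\pm1,\pm2$ by the $K_{\pm1}$ column-raising normalizations, whereas the paper runs the same computation for general $j$ at once and makes the identical observation about those normalizations.
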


Then we can realize $\wtilde{Y}^{d,a}$ as a composition of left-invariant projection and differential operators:
\begin{cor}
\label{cor:wtildeYasLeftInv}
The operator $\wtilde{Y}^{d,a}:C^\infty(G)\to C^\infty(G)$, defined by \eqref{eq:YtildeInitDef} and \eqref{eq:YtildeInitDef2} is given by
\[ \wtilde{Y}^{d,a} = \piecewise{\displaystyle \frac{1}{\CG{d,2,a}{0,0}} \tildeWigP{d+a}{0} X_0 \tildeWigP{d}{0} & \If a=-2,0,2,\\ \displaystyle \frac{1}{\CG{d,2,a}{0,1}\sqrt{(d+a)(d+a+1)}} K_{-1} \tildeWigP{d+a}{1} X_1 \tildeWigP{d}{0} & \If a=\pm 1.} \]
In particular, $\wtilde{Y}^{d,a}$ is left translation invariant.
\end{cor}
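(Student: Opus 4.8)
The plan is to read the corollary as a rearrangement of Proposition~\ref{prop:XjAction}, solving each displayed identity there for $\wtilde{Y}^{d,a}f$. The one structural observation needed up front is that, by its very definition \eqref{eq:YtildeInitDef2}--\eqref{eq:YtildeInitDef3}, the operator $\wtilde{Y}^{d,a}$ depends on $f$ only through the coefficient functions $f_{m'}$, and these are extracted from $\tildeWigP{d}{0}f$. Since $\tildeWigP{d}{0}$ is an idempotent projection, the coefficients extracted from $\tildeWigP{d}{0}f$ and from $\tildeWigP{d}{0}\big(\tildeWigP{d}{0}f\big)$ coincide, so $\wtilde{Y}^{d,a}=\wtilde{Y}^{d,a}\circ\tildeWigP{d}{0}$ holds trivially. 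This is what produces the trailing $\tildeWigP{d}{0}$ in the claimed formulas and, more importantly, it lets me apply Proposition~\ref{prop:XjAction} to the function $\tildeWigP{d}{0}f$, which lies in $\wtilde{C}(G,\WigDMat{d})$ as the hypothesis of that proposition requires.

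For $a\in\{-2,0,2\}$ the argument is immediate: applying the first identity of Proposition~\ref{prop:XjAction} to $\tildeWigP{d}{0}f$ gives $\tildeWigP{d+a}{0}X_0\tildeWigP{d}{0}f=\CG{d,2,a}{0,0}\,\wtilde{Y}^{d,a}f$, and since $\CG{d,2,a}{0,0}\ne0$ for these three values of $a$ one divides through. It is worth flagging why $X_0$ cannot be used when $a=\pm1$: by \eqref{eq:CG2pm1} one has $\CG{d,2,\pm1}{0,0}=0$, so the $X_0$-identity degenerates to $0=0$ and carries no information. This is the reason the two cases of the corollary look different.

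The genuine obstacle is the $a=\pm1$ case, where the relevant identity of Proposition~\ref{prop:XjAction} produces $K_{1}\wtilde{Y}^{d,a}f$ rather than $\wtilde{Y}^{d,a}f$ itself, and $K_{1}$ is not invertible on $C^\infty(G)$. The resolution is to notice that the output $\wtilde{Y}^{d,a}f$ lies, by \eqref{eq:YtildeInitDef2}, entirely in the span of the central ($m=0$) column of $\WigDMat{d+a}$; on such functions \eqref{eq:K1actWD} shows $K_{1}$ sends column $0$ to column $1$ with factor $\sqrt{(d+a)(d+a+1)}$ and $K_{-1}$ returns column $1$ to column $0$ with the same factor, so that $K_{-1}K_{1}$ acts as the scalar $(d+a)(d+a+1)$ on $\wtilde{Y}^{d,a}f$. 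Hence applying $K_{-1}$ to the identity $\tildeWigP{d+a}{1}X_{1}\tildeWigP{d}{0}f=\tfrac{1}{\sqrt{(d+a)(d+a+1)}}\CG{d,2,a}{0,1}K_{1}\wtilde{Y}^{d,a}f$ collapses the right-hand side to $\sqrt{(d+a)(d+a+1)}\,\CG{d,2,a}{0,1}\wtilde{Y}^{d,a}f$, and dividing yields exactly the stated expression. I would remark that the degenerate corners (e.g.\ $d+a$ so small that column $1$ of $\WigDMat{d+a}$ does not exist) are vacuous, since then both the normalizing factor $\sqrt{(d+a)(d+a+1)}$ and $K_1\wtilde{Y}^{d,a}f$ vanish.

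Finally, the left-translation invariance asserted at the end is a formal consequence: $\tildeWigP{d}{0}$ is defined in \eqref{eq:tildePDef} by integration against a Wigner entry on the \emph{right}, and $X_0,X_1,K_{\pm1}$ are right-regular Lie-algebra actions in the sense of \eqref{eq:LieActDef}; all of these commute with left translation, hence so does any composition of them. Since I have just exhibited $\wtilde{Y}^{d,a}$ as such a composition, it is left-invariant. I expect the only place demanding care is the $a=\pm1$ computation of the $K_{-1}K_{1}$ eigenvalue and the bookkeeping of which column each factor moves the function into; everything else is substitution into Proposition~\ref{prop:XjAction}.
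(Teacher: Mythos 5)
Your proposal is correct and is exactly the argument the paper intends: the corollary is a direct rearrangement of Proposition \ref{prop:XjAction} applied to $\tildeWigP{d}{0}f$, with the idempotence of $\tildeWigP{d}{0}$ supplying the trailing projection, division by the nonvanishing Clebsch--Gordan coefficient in the even case, and the observation that $K_{-1}K_{1}$ acts by the scalar $(d+a)(d+a+1)$ on functions supported in the central column handling $a=\pm1$. Your flagging of the degenerate corners (e.g.\ $d+a=0$) is a point the paper itself glosses over, and your identification of why $X_0$ fails for $a=\pm1$ matches the remark following the corollary.
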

Note that $\CG{d,2,\pm1}{0,0}=0$ by \eqref{eq:CG2pm1}, so we cannot use the top expression for $\wtilde{Y}^{d,a}$ when $a=\pm1$. 

For each $d$, we may extend this to an operator $Y^a:C^\infty(G,\C^{2d+1})\to C^\infty(G,\C^{2d+2a+1})$ on smooth, vector-valued functions
\[ C^\infty(G,\C^{2d+1}) :=\set{f:G\to \C^{2d+1}\setdiv f \text{ smooth}}, \]
by applying $\wtilde{Y}^{d,a}$ to the central entry and projecting back to a vector-valued function:
\[ Y^a f = \WigP{d+a}{0} \wtilde{Y}^{d,a} f_0, \qquad f=(f_{-d},\ldots,f_d) \in C^\infty(G,\C^{2d+1}). \]
The components of $Y^a f \in C^\infty(G,\C^{2d+2a+1})$ are then given by
\begin{align}
\label{eq:vecYDef}
	(Y^a f)_{m'}(xy) = \sum_{j=-2}^2 \wtilde{Y}^{d,a}_{m',j} f_{m'-j}(xy),
\end{align}
and again we may realize this as a left-invariant operator by
\begin{align}
\label{eq:vecYLeftTrans}
	Y^a f = \piecewise{\displaystyle \frac{1}{\CG{d,2,a}{0,0}} \WigP{d+a}{0} X_0 f_0 & \If a=-2,0,2, \\ \displaystyle \frac{1}{\CG{d,2,a}{0,1}} \WigP{d+a}{1} X_1 f_0 & \If a=\pm 1.}
\end{align}
The dimension $d$ in the domain of the operator $Y^a$ is to be understood from context; alternately, via the natural extension, one may think of $Y^a$ as an operator on the union of vector-valued functions of all (odd) dimensions
\begin{align}
\label{eq:YDom}
	Y^a:\bigcup_d C^\infty(G,\C^{2d+1}) \to \bigcup_d C^\infty(G,\C^{2d+1}),
\end{align}
which satisfies $Y^a C^\infty(G,\C^{2d+1}) \subset C^\infty(G,\C^{2d+2a+1})$ for any $d \ge 0$, $\abs{a}\le 2$, $d+a \ge 0$.
In the course of the current paper, we have no need to place any algebraic structure on the space $\bigcup_d C^\infty(G,\C^{2d+1})$.

\begin{proof}[Proof of proposition \ref{prop:XjAction}]
Consider $\tildeWigP{d+a}{j} X_j f$, and assume for convenience $\CG{d,2,a}{0,j} \ne 0$, since otherwise the result is trivial.
The function of the operators
\[ \tfrac{1}{\sqrt{(d+a)(d+a+1)}} K_{\pm 1} \qquad \text{and} \qquad \tfrac{1}{\sqrt{(d+a)(d+a+1)}\sqrt{(d+a)(d+a+1)-2}} K_{\pm 1} K_{\pm 1} \]
is precisely to shift the column of the Wigner $\WigDName$-matrix on which $\wtilde{Y}^{d,a} f$ is supported from zero (by \eqref{eq:YtildeInitDef2}) to $\pm1$ and $\pm2$, respectively.

We apply \eqref{eq:KLeftactWD} and \eqref{eq:CGExpand} to \eqref{eq:XjOp}, giving
\begin{align*}
	\frac{1}{\CG{d,2,a}{0,j}} \tildeWigP{d+a}{j} X_j f =	& \sum_{m'=-d}^d \sum_{\ell=-2}^2 \CG{d,2,a}{m',\ell} \WigD{d+a}{m'+\ell}{j}(k) Z_\ell f_{m'}(xy) \\
	& \qquad + \sum_{m'=-d}^d f_{m'}(xy) \sum_\pm \pm m' \CG{d,2,a}{m',\pm2} \WigD{d+a}{m'\pm2}{j}(k) \\
	& \qquad - \sum_{m'=-d}^d f_{m'}(xy) \sum_\pm \CG{d,2,a}{m'\mp1,\pm1} \sqrt{d(d+1)-m'(m'\mp 1)} \WigD{d+a}{m'}{j}(k).
\end{align*}

Now we send $m' \mapsto m'-\ell$ in the first two sums (using $\ell=\pm2$ in the second sum), and the result follows from
\[ \sum_\pm \CG{d,2,a}{m'\mp1,\pm1} \sqrt{d(d+1)-m'(m'\mp 1)} = \CG{d,2,a}{m',0} \CGB{d}{a}, \]
which can be verified directly from \eqref{eq:CG2pm2}-\eqref{eq:CG1} or by the properties of the Wigner $3j$-Symbol \cite[section 34.3]{DLMF}.
\end{proof}

\subsection{The action of $Y^a$ on the power function}
For each $d$, define $Y^a_\mu:\C^{2d+1}\to\C^{2d+2a+1}$ by
\begin{align}
\label{eq:YmuDef}
Y^a_\mu f = p_{-\rho-\mu}(y)Y^a p_{\rho+\mu}(y) f.
\end{align}
As with the definition \eqref{eq:YDom}, either the dimension of the domain of $Y^a_\mu$ is to be understood from context, or we may use the natural extension
\begin{align}
\label{eq:YmuDom}
	Y^a_\mu:\bigcup_d \C^{2d+1} \to \bigcup_d \C^{2d+1},
\end{align}
which satisfies $Y^a_\mu \C^{2d+1}\subset\C^{2d+2a+1}$, and again, we have no need to place an algebraic structure on $\bigcup_d \C^{2d+1}$.

The $Z_j$ operators collectively act on the power function by eigenvalues as
\[ (Z_{-2} p_{\rho+\mu}, \ldots, Z_2 p_{\rho+\mu}) = (\mu_1-\mu_2+1,0,\sqrt{6}(\mu_3-1),0,\mu_1-\mu_2+1) p_{\rho+\mu}, \]
and using $\bv^d_j$ and $\bu^{d,\pm}_j$ as in \eqref{eq:vdef} and \eqref{eq:udef}, we have
\begin{align}
	Y^a \bv^d_j p_{\rho+\mu} = \Bigl(
	&\bv^{d+a}_{j-2} \CG{d,2,a}{j,-2}(\mu_1-\mu_2+1-j) \\
	+&\bv^{d+a}_{j} \CG{d,2,a}{j,0}(\sqrt{6}(\mu_3-1)-\CGB{d}{a}) \nonumber \\
	+&\bv^{d+a}_{j+2} \CG{d,2,a}{j,2}(\mu_1-\mu_2+1+j) \Bigr) p_{\rho+\mu}, \nonumber
\end{align}
and by the symmetry \eqref{eq:CGsymm},
\begin{align}
\label{eq:GeneralFourTermYda}
	Y^a_\mu \bu^{d,\pm}_j =
	&\CG{d,2,a}{j,-2} (\mu_1-\mu_2+1-j) \bu^{d+a,\pm}_{j-2} \\
	+&\CG{d,2,a}{j,0} (\sqrt{6}(\mu_3-1)-\CGB{d}{a}) \bu^{d+a,\pm}_{j} \nonumber \\
	+&\CG{d,2,a}{j,2} (\mu_1-\mu_2+1+j) \bu^{d+a,\pm}_{j+2}. \nonumber
\end{align}
The reason for the $(-1)^d$ in the definition of $\bu^{d,\pm}_j$ is to maintain consistency across parities of $a$ in the above equation.

Because this is the key identity, we write it out explicitly for each $a$.
We assume $\abs{j} \le d$.
At $a=0$, $Y^0_\mu \bu^{0,\pm}_j=0$, and when $d > 0$,
\begin{align}
\label{eq:ThreeTermRecursion}
	0=& \sqrt{6(d+2-j)(d+1-j)(d+j)(d-1+j)} (\mu_1-\mu_2+1-j) \bu^{d,\pm}_{j-2} \\
	&-2\paren{\sqrt{6}(d(d+1)-3j^2) \mu_3+\sqrt{d(d+1)(2d-1)(2d+3)}Y^0_\mu} \bu^{d,\pm}_j \nonumber \\
	&+\sqrt{6(d+2+j)(d+1+j)(d-j)(d-1-j)} (\mu_1-\mu_2+1+j) \bu^{d,\pm}_{j+2}. \nonumber
\end{align}
At $a=1$,
\begin{align}
\label{eq:FourTermInduction1}
	&\sqrt{2d(d+1)(d+2)(2d+1)} Y^1_\mu \bu^{d,\pm}_j = \\
	&\qquad-\sqrt{(d+1-j)(d+2-j)(d+3-j)(d+j)} (\mu_1-\mu_2+1-j) \bu^{d+1,\pm}_{j-2} \nonumber \\
	&\qquad-2j\sqrt{(d+1-j)(d+1+j)} (3\mu_3-d-1) \bu^{d+1,\pm}_{j} \nonumber \\
	&\qquad+\sqrt{(d-j)(d+1+j)(d+2+j)(d+3+j)} (\mu_1-\mu_2+1+j) \bu^{d+1,\pm}_{j+2}. \nonumber
\end{align}
At $a=-1$,
\begin{align}
\label{eq:FourTermReduction1}
	& \sqrt{2d(d-1)(d+1)(2d+1)} Y^{-1}_\mu \bu^{d,\pm}_j = \\
	&\qquad-\sqrt{(d+1-j)(d-2+j)(d-1+j)(d+j)} (\mu_1-\mu_2+1-j) \bu^{d-1,\pm}_{j-2} \nonumber \\
	&\qquad+2j\sqrt{(d-j)(d+j)} (3\mu_3+d) \bu^{d-1,\pm}_{j} \nonumber \\
	&\qquad+\sqrt{(d-2-j)(d-1-j)(d-j)(d+1+j)} (\mu_1-\mu_2+1+j) \bu^{d-1,\pm}_{j+2}. \nonumber
\end{align}
At $a=2$,
\begin{align}
\label{eq:FourTermInduction2}
	&2\sqrt{(d+1)(d+2)(2d+1)(2d+3)} Y^2_\mu \bu^{d,\pm}_j = \\
	&\qquad\sqrt{(d+1-j)(d+2-j)(d+3-j)(d+4-j)} (\mu_1-\mu_2+1-j) \bu^{d+2,\pm}_{j-2} \nonumber \\
	&\qquad+2\sqrt{(d+1-j)(d+2-j)(d+1+j)(d+2+j)} (3\mu_3-2d-3) \bu^{d+2,\pm}_{j} \nonumber \\
	&\qquad+\sqrt{(d+1+j)(d+2+j)(d+3+j)(d+4+j)} (\mu_1-\mu_2+1+j) \bu^{d+2,\pm}_{j+2}. \nonumber
\end{align}
At $a=-2$,
\begin{align}
\label{eq:FourTermReduction2}
	& 2\sqrt{d(d-1)(2d-1)(2d+1)} Y^{-2}_\mu \bu^{d,\pm}_j = \\
	&\qquad\sqrt{(d-3+j)(d-2+j)(d-1+j)(d+j)} (\mu_1-\mu_2+1-j) \bu^{d-2,\pm}_{j-2} \nonumber \\
	&\qquad+2\sqrt{(d-1-j)(d-j)(d-1+j)(d+j)} (3\mu_3+2d-1) \bu^{d-2,\pm}_{j} \nonumber \\
	&\qquad+\sqrt{(d-3-j)(d-2-j)(d-1-j)(d-j)} (\mu_1-\mu_2+1+j) \bu^{d-2,\pm}_{j+2}. \nonumber
\end{align}

\section{Interactions with the Lie algebra}
From the discussion of the previous section and sections \ref{sect:LieAlgebras} and \ref{sect:KProjOps}, in studying the action of the Lie algebra $\frak{g}_\C$ on $\mathcal{A}$, it is sufficient to study the action of the $Y^a$ operators on $\mathcal{A}^d$.
Further, by the Fourier expansion (I.3.30), it is sufficient to study the action of $Y^a_\mu$ on $\C^{2d+1}$, provided we know which vectors $f\in\C^{2d+1}$ are sent to zero by the Jacquet-Whittaker function, which we will carefully study in section \ref{sect:GDWhitt}.

For the moment, we need to study the interaction of the Lie algebra, via the $Y^a_\mu$ operators, with several common operations.

\subsection{The adjoint of $Y^a$}
First, we compute the adjoint of the $Y^a$ operators with respect to the inner product on $\mathcal{A}^d$.
We use the composition of operators given in \eqref{eq:vecYLeftTrans}.

Suppose $f\in\wtilde{C}(\Gamma\backslash G,\WigDMat{d})$ and $h\in C(\Gamma\backslash G,\WigDMat{d})$ are square-integrable, then
\begin{align*}
	\int_{\Gamma\backslash G} (\WigP{d}{m}f)(g) \, \wbar{\trans{h(g)}} dg &= (2d+1) \int_{\Gamma\backslash G} f(g) \int_K \WigDRow{d}{m}\paren{k^{-1}} \WigDMat{d}(k) dk \, \wbar{\trans{h(g)}} dg \\
	&= (2d+1) \int_{\Gamma\backslash G} f(g) \, \wbar{h_{m}(g)} dg,
\end{align*}
since the integral over $K$ in the middle is just the $m$-th row of the identity matrix (indexing from the center).

If instead $f,h\in\mathcal{A}$, with $\mathcal{A}$ as in section \ref{sect:MainThms}, then
\begin{align}
\label{eq:LieAdjointOp}
	\int_{\Gamma\backslash G} (X f)(g) \, \wbar{h(g)} dg &= \int_{\Gamma\backslash G}  f(g) \, \wbar{(-\wbar{X} h)(g)} dg,
\end{align}
directly from the definitions \eqref{eq:LieActDef} and the Haar measure $dg$.

Now suppose $f\in \mathcal{A}^d$ and $h\in \mathcal{A}^{d+a}$, then if $a$ is even,
\begin{align*}
	\int_{\Gamma\backslash G} (Y^a f)(g) \, \wbar{\trans{h(g)}} dg &= \frac{(2d+2a+1)}{\CG{d,2,a}{0,0}} \int_{\Gamma\backslash G} (X_0 f_0)(g) \, \wbar{h_0(g)} dg \\
	&= \frac{(2d+2a+1)}{\CG{d,2,a}{0,0}} \int_{\Gamma\backslash G} f_0(g) \, \wbar{(-\wbar{X_0} h_0)(g)} dg \\
	&= \frac{(2d+2a+1)}{(2d+1)\CG{d,2,a}{0,0}} \int_{\Gamma\backslash G} f(g) \wbar{\trans{\paren{\WigP{d}{0} (-\wbar{X_0}) h_0}(g)}} \, dg,
\end{align*}
so the adjoint of $Y^a$ is
\[ \what{Y}^a h = -\frac{(2d+2a+1)}{(2d+1)\CG{d,2,a}{0,0}} \WigP{d}{0} X_0 h_0 = -\frac{(2d+2a+1)\CG{d+a,2,-a}{0,0}}{(2d+1)\CG{d,2,a}{0,0}} Y^{-a}, \]
since the Clebsch-Gordan coefficients and $X_0$ are real.
Similarly, if $a$ is odd, the adjoint is
\[ \what{Y}^a h = \tfrac{(2d+2a+1)}{(2d+1)\CG{d,2,a}{0,1}} \WigP{d}{0} (-\wbar{X_1}) h_1, \]
but we would prefer to use $h_0$, since this is where we have done all of our computations so far, so we use
\[ \what{Y}^a h = \tfrac{(2d+2a+1)}{(2d+1)\sqrt{(d+a)(d+a+1)}\CG{d,2,a}{0,1}} \WigP{d}{0} X_{-1} K_1 h_0, \]
and
\begin{align*}
	\WigP{d}{0} X_{-1} K_1 h_0 =& \WigP{d}{0} K_1 X_{-1} h_0 -\sqrt{6} \WigP{d}{0} X_0 h_0 = \sqrt{d(d+1)} \WigP{d}{-1} X_{-1} h_0-0 \\
	=& \sqrt{d(d+1)} \CG{d+a,2,-a}{0,-1} Y^{-a},
\end{align*}
(recall the remark below corollary \ref{cor:wtildeYasLeftInv}) giving
\[ \what{Y}^a = \frac{(2d+2a+1)\sqrt{d(d+1)} \CG{d+a,2,-a}{0,-1}}{(2d+1)\sqrt{(d+a)(d+a+1)}\CG{d,2,a}{0,1}} Y^{-a}. \]

In general, after writing out the Clebsch-Gordan coefficients, we have:
\begin{prop}
With respect to the inner products \eqref{eq:VecCuspInnerProd} on $\mathcal{A}^d$ and $\mathcal{A}^{d+a}$, the operator $Y^a$ has adjoint
\begin{align}
\label{eq:AdjY}
	\what{Y}^a =& -(-1)^a \sqrt{\tfrac{2d+2a+1}{2d+1}} Y^{-a}.
\end{align}
\end{prop}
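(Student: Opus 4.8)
The plan is to compute $\what{Y}^a$ by peeling $Y^a$ apart into its left-invariant pieces and transferring each piece across the inner product. Throughout I would work from the realization \eqref{eq:vecYLeftTrans}, which writes $Y^a$ on the central component $f_0$ as $\tfrac{1}{\CG{d,2,a}{0,0}}\WigP{d+a}{0}X_0$ when $a$ is even and as $\tfrac{1}{\CG{d,2,a}{0,1}}\WigP{d+a}{1}X_1$ when $a$ is odd. The two elementary transfer rules I need are: first, the projection adjoint $\innerprod{\WigP{d}{m}f,h}=(2d+1)\innerprod{f,h_m}$, which follows because $\int_K\WigDRow{d}{m}(k^{-1})\WigDMat{d}(k)\,dk$ is the $m$-th row of the identity; and second, the Lie-algebra adjoint $\innerprod{Xf,h}=\innerprod{f,-\wbar{X}h}$, which comes straight from the definition \eqref{eq:LieActDef} together with the invariance of $dg$.

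For even $a$ the computation is direct: applying both transfer rules and using that $X_0=-\sqrt{6}\,y_1\partial_{y_1}$ is real (so $\wbar{X_0}=X_0$), I land on $\what{Y}^a=-\tfrac{(2d+2a+1)\CG{d+a,2,-a}{0,0}}{(2d+1)\CG{d,2,a}{0,0}}Y^{-a}$.

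The odd-$a$ case is where I expect the real work, and it is the step I would flag as the main obstacle. The naive adjoint produces $-\wbar{X_1}=X_{-1}$ paired against $h_1$, whereas every identity I have built lives on the central component $h_0$. So I would first replace $h_1=\tfrac{1}{\sqrt{(d+a)(d+a+1)}}K_1h_0$ via \eqref{eq:Kpm1f}, then push $K_1$ to the left using the commutator $[X_{-1},K_1]=-\sqrt{6}\,X_0$, which I read off from the commutation relations of section \ref{sect:CommRels} and the value $\CG{2,1,0}{-1,1}=\tfrac{\sqrt{2}}{2}$. The resulting $X_0$-term carries a projection $\WigP{d}{0}X_0h_0$ that vanishes, because it would project $X_0h_0$ onto $K$-type $d=(d+a)+(-a)$ with $-a$ odd and $\CG{d+a,2,-a}{0,0}=0$ (the remark after corollary \ref{cor:wtildeYasLeftInv}). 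The surviving term I collapse with \eqref{eq:KPComm} and the $X_{-1}$-analogue of \eqref{eq:vecYLeftTrans}, giving $\what{Y}^a=\tfrac{(2d+2a+1)\sqrt{d(d+1)}\,\CG{d+a,2,-a}{0,-1}}{(2d+1)\sqrt{(d+a)(d+a+1)}\,\CG{d,2,a}{0,1}}Y^{-a}$.

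Finally I would substitute the explicit Clebsch--Gordan values from \eqref{eq:CG2pm2}, \eqref{eq:CG2pm1}, and \eqref{eq:CG20}, together with the closed form of $\frak{C}^{d,k,a}_{m,i}$. In the even case the ratio $\CG{d+a,2,-a}{0,0}/\CG{d,2,a}{0,0}$ telescopes through the factorials to $\sqrt{(2d+1)/(2d+2a+1)}$; in the odd case the analogous combined ratio does the same once the $\sqrt{d(d+1)}$ and $\sqrt{(d+a)(d+a+1)}$ factors cancel against the $\frak{C}$'s. Both then reduce to the single clean statement $\what{Y}^a=-(-1)^a\sqrt{(2d+2a+1)/(2d+1)}\,Y^{-a}$, the sign $-(-1)^a$ accounting uniformly for the two parities.
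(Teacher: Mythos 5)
Your proposal is correct and follows essentially the same route as the paper's proof: the same two transfer rules (the projection adjoint giving the factor $\tfrac{2d+2a+1}{2d+1}$ and the Lie-algebra adjoint $X\mapsto-\wbar{X}$), the same direct computation for even $a$, and for odd $a$ the same maneuver of rewriting $h_1$ via $K_1h_0$, commuting $K_1$ past $X_{-1}$ with $[X_{-1},K_1]=-\sqrt{6}X_0$, and discarding the $X_0$-term because $\CG{d+a,2,-a}{0,0}=0$. No gaps.
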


By analogy with \eqref{eq:YmuDef}, we define the adjoint action on power functions $\what{Y}^a_\mu:\C^{2d+2a+1}\to\C^{2d+1}$ by
\begin{align}
\label{eq:AdjYmuDef}
\what{Y}^a_\mu f := p_{-\rho-\mu}(y)\what{Y}^a p_{\rho+\mu}(y) f = -(-1)^a \sqrt{\tfrac{2d+2a+1}{2d+1}} Y^{-a}_\mu f.
\end{align}

\subsection{The intertwining operators}
We require an understanding of the interaction between the Lie algebra, i.e. the $Y^a$ operators, and the intertwining operators, a.k.a. the matrices $T^d(w,\mu)$ occuring in the functional equations of the Whittaker function, as in \eqref{eq:WhittFEs}.

First, a lemma:
\begin{lem}
\label{eq:WhittLinIdepn}
	Suppose none of the differences $\mu_i-\mu_j,i\ne j$ are in $\Z$, then for a vector $f\in\C^{2d+1}$, $d \ge 0$ and any $U(\R)$-character $\psi$, the vector-valued function $f W^d(g,\mu,\psi)$ is identically zero as a function of $g$ exactly when $f=0$.
\end{lem}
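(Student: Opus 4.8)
The forward direction is trivial, so assume $fW^d(\cdot,\mu,\psi)\equiv 0$ and let me aim to deduce $f=0$. The plan is to realise the entries of $W^d$ as images of principal-series vectors under the Jacquet map, and then to use that, for $\mu$ with no integer differences, this map is injective.

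To set up, note first that each scalar entry $I^d_{m',m}(\cdot,\mu)$ belongs to the ($K$-finite) principal series module $\mathcal{I}_\mu=\{\phi\in C^\infty(G):\phi(xyg)=p_{\rho+\mu}(y)\phi(g)\}$: a one-line Iwasawa computation gives $I^d_{m',m}(x_0y_0g,\mu)=p_{\rho+\mu}(y_0)\,I^d_{m',m}(g,\mu)$. By the definition \eqref{eq:JacWhittDef}, $W^d_{m',m}=\mathcal{W}_\psi(I^d_{m',m})$, where $\mathcal{W}_\psi\colon\phi\mapsto\int_{U(\R)}\phi(w_l u\,\cdot)\wbar{\psi(u)}\,du$ is the Jacquet map, a $G$-intertwiner from $\mathcal{I}_\mu$ into $C^\infty(G)$ for the right-regular action. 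Hence the $m$-th entry of $fW^d$ equals $\mathcal{W}_\psi(\Phi_m)$ with $\Phi_m:=\sum_{m'}f_{m'}I^d_{m',m}(\cdot,\mu)\in\mathcal{I}_\mu$, and the hypothesis is equivalent to $\mathcal{W}_\psi(\Phi_m)\equiv 0$ for every $m$.

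Next I would invoke injectivity of $\mathcal{W}_\psi$. Its kernel is a $G$-submodule of $\mathcal{I}_\mu$, and since the inducing character is trivial on $Y^+$ but not on the sign group $V$, the module splits as $\mathcal{I}_\mu=\bigoplus_{\chi}\mathcal{I}_{\mu,\chi}$ over the four characters of $V$ -- this is the decomposition cut out on the row index by the projectors $\Sigma^d_\chi$ of \eqref{eq:SigmadchiDef}. For $\mu$ with no $\mu_i-\mu_j\in\Z$ each constituent $\mathcal{I}_{\mu,\chi}$ is irreducible, so $\ker\mathcal{W}_\psi$ is a sub-sum of them and $\mathcal{W}_\psi$ is injective as soon as it is nonzero on each. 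Granting this, $\Phi_m=0$ for all $m$; writing $\Phi_m(xyk)=p_{\rho+\mu}(y)\sum_{m'}f_{m'}\WigD{d}{m'}{m}(k)$ and evaluating at $k=I$ forces $f=0$.

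The main obstacle is exactly the uniform nonvanishing of $\mathcal{W}_\psi$ on every constituent $\mathcal{I}_{\mu,\chi}$, valid for all characters $\psi$: for nondegenerate $\psi$ this is the existence half of the uniqueness of Whittaker models (the circle of ideas behind theorem \ref{thm:WhittMultOne}); for the trivial character $\mathcal{W}_\psi$ is the long intertwining operator, an isomorphism for generic $\mu$; and the partially degenerate cases factor as an intertwining operator followed by a rank-one nondegenerate integral. One must also check that the $G$-equivariance of $\mathcal{W}_\psi$ persists by analytic continuation outside the range $\Re(\mu_1)>\Re(\mu_2)>\Re(\mu_3)$. A representation-theory-free alternative keeps only the trivial reduction $fW^d\equiv 0\Rightarrow fW^d(y)=0$ for all $y\in Y^+$ and then inducts on $d$: because each $Y^a$ commutes with the Jacquet integral, a lowering operator sends $fW^d$ to $(Y^{-a}_\mu f)W^{d-a}$, and the structure constants in \eqref{eq:FourTermReduction1}--\eqref{eq:FourTermReduction2} are nonzero precisely when $\mu_i-\mu_j\notin\Z$, letting one descend to the transparent base cases $d=0,1$ where $W^d\not\equiv 0$; the price there is controlling the kernels of the operators $Y^{-a}_\mu$.
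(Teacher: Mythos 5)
Your reduction of the lemma to injectivity of the Jacquet map on the principal series is a genuinely different route from the one the paper takes, and it is sound in outline; but as written the proof is not closed, because all of the content has been pushed into the two assertions you defer: (i) irreducibility of each constituent $\mathcal{I}_{\mu,\chi}$ when no $\mu_i-\mu_j\in\Z$, and (ii) nonvanishing of $\mathcal{W}_\psi$ on each constituent for \emph{every} character $\psi$, the degenerate ones included. Point (i) is a standard but nontrivial theorem on minimal principal series of split groups, and it is exactly the kind of representation-theoretic input this paper is structured to avoid (only the three items of theorem \ref{thm:ext} are imported). Point (ii) is where the lemma actually lives: for $\chi\ne\chipmpm{++}$ the spherical vector is absent, so one must exhibit a nonzero Whittaker value on some higher $K$-type, and for degenerate $\psi$ one needs nonvanishing of the long (or partial) intertwining operator at the given $\mu$ --- that is, precisely the invertibility of $T^d(w,\mu)$ and of $\mathcal{W}^d(0,u)$ off the integers. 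Once you have supplied (ii) you are already holding the ingredients of the paper's proof, and (i) becomes superfluous.

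The paper's argument is shorter and bypasses irreducibility entirely: by (I.3.28) and (I.3.29), the asymptotics of $f\,W^d(\cdot,\mu,\psi_{1,1})$ are governed by the terms $f\,T^d(w,\mu) W^d(\cdot,\mu^w,\psi_{0,0})$ attached to the distinct power functions $p_{\rho+\mu^w}$, and since $T^d(w,\mu)$ and the degenerate Whittaker matrix are products of $\WigDMat{d}(\vpmpm{--})$, $\WigDMat{d}(w_l)$, $\Gamma^d_\mathcal{W}(u,+1)$ and $\mathcal{W}^d(0,u)$ --- all invertible for $u\notin\Z$ --- the vanishing of every asymptotic coefficient forces $f=0$; degenerate $\psi$ and the boundary cases $\Re(\mu_i)=\Re(\mu_j)$ are then handled as in section I.3.5. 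Separately, be careful with your fallback induction on $d$: the claim that the structure constants in \eqref{eq:FourTermReduction1}--\eqref{eq:FourTermReduction2} are nonzero precisely when $\mu_i-\mu_j\notin\Z$ is false as stated (for instance the coefficient $3\mu_3+d=(\mu_3-\mu_1)+(\mu_3-\mu_2)+d$ can vanish with no single difference integral), and determining exactly which vectors are annihilated by both lowering operators is the content of proposition \ref{prop:GDPower}, which comes later in the paper and is considerably more work than this lemma.
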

\begin{proof}
For a non-degenerate character $\psi_y$, $y_1y_2\ne0$, and either $\Re(\mu_1)>\Re(\mu_2)>\Re(\mu_3)$ or $\Re(\mu)=0$, this follows from (I.3.28) and (I.3.29) and the fact that $W^d(g,\mu,\psi_{0,0})$ and $T^d(w,\mu)$ are given by products of $\WigDMat{d}(\vpmpm{--})$, $\WigDMat{d}(w_l)$, $\Gamma^d_\mathcal{W}(u,+1)$, and $\mathcal{W}^d(0,u)$ which are invertible matrices when $u \notin \Z$ (see \eqref{eq:WhittGammas} and (I.2.18)).
This extends to the cases where only one $\Re(\mu_i)=\Re(\mu_j)$, $i \ne j$ and degenerate characters $\psi_{0,y_2}, \psi_{y_1,0},\psi_{0,0}$ by the same argument as in section I.3.5.
\end{proof}

Suppose none of $\mu_i-\mu_j\in\Z,i\ne j$, then for $f\in\C^{2d+1}$ and any $w\in W$,
\begin{align*}
	(Y^a_\mu (f T^d(w^{-1},\mu^w)))W^{d+a}(g,\mu,\psi_{1,1}) =& Y^a (f T^d(w^{-1},\mu^w))W^d(g,\mu,\psi_{1,1})) \\
	=& Y^a (f W^d(g,\mu^w,\psi_{1,1})) \\
	=& (Y^a_{\mu^w} f) W^{d+a}(g,\mu^w,\psi_{1,1}) \\
	=& ((Y^a_{\mu^w} f) T^{d+a}(w^{-1}, \mu^w)) W^{d+a}(g,\mu,\psi_{1,1}),
\end{align*}
and the lemma implies
\begin{align}
\label{eq:IntertwiningOpsY}
	Y^a_\mu (f T^d(w^{-1},\mu^w)) = (Y^a_{\mu^w} f) T^{d+a}(w^{-1}, \mu^w).
\end{align}
This continues to an equality of meromorphic functions in $\mu$.

\subsection{Duality}
As in (I.5.4), we define the involution $\iota:G\to G$ by $g^\iota = w_l\trans{\paren{g^{-1}}} w_l$.
To every cusp form $\phi \in \mathcal{A}^d$, we may associate a dual form $\wcheck{\phi}(g) = \phi(g^\iota w_l) \in \mathcal{A}^d$, which is frequently used to show isomorphisms between subspaces of cusp forms.
We will require an understanding of this duality, and in particular, its interaction with the Lie algebra, at the level of Whittaker functions.

Comparing (I.3.22) to (I.3.24), we have the relation
\[ \WigDMat{d}(\vpmpm{--} w_l) W^d(I,\mu,\psi_y) \WigDMat{d}(w_l \vpmpm{--}) = W^d(I,-\mu^{w_l},\psi_{y^\iota}), \]
and by (I.3.7), we have
\[ W^d(y,\mu,\psi_{1,1}) = \WigDMat{d}(\vpmpm{--} w_l) W^d(y^\iota,-\mu^{w_l},\psi_{1,1}) \WigDMat{d}(\vpmpm{--} w_l). \]
Thus in general, we have
\begin{align}
\label{eq:WhittDuality}
	W^d(g,\mu,\psi_{1,1}) = \WigDMat{d}(\vpmpm{--} w_l) W^d(\vpmpm{--} g^\iota w_l,-\mu^{w_l},\psi_{1,1}).
\end{align}

For $f\in C^\infty(G)$, set $\wcheck{f}(g)=f(g^\iota w_l)$, then the action of the $K_j$ operators on the dual form $\wcheck{f}$ is given by
\begin{align*}
	K_j \wcheck{f}(g) =& \left. \frac{d}{dt} f\paren{g^\iota w_l \exp (-t \trans{K_j})}\right|_{t=0} = \left. \frac{d}{dt} f\paren{g^\iota w_l \exp t K_j}\right|_{t=0} = \wcheck{K_j f}(g^\iota).
\end{align*}
Similarly $X_j \wcheck{f}(g) = -\wcheck{X_j f}(g^\iota)$.
Therefore, for $f\in\C^d$,
\begin{align*}
	(Y^a_\mu (f \WigDMat{d}(\vpmpm{--} w_l))) W^{d+a}(g,\mu,\psi_{1,1}) =& Y^a f W^d(\vpmpm{--} g^\iota w_l,-\mu^{w_l},\psi_{1,1}) \\
	=& -(Y^a_{-\mu^{w_l}} f) W^{d+a}(\vpmpm{--} g^\iota w_l,-\mu^{w_l},\psi_{1,1}) \\
	=& -(Y^a_{-\mu^{w_l}} f) \WigDMat{d}(\vpmpm{--} w_l) W^{d+a}(g,\mu,\psi_{1,1}),
\end{align*}
and we conclude as in the previous subsection that
\begin{align}
\label{eq:YamuDuality}
	Y^a_\mu (f \WigDMat{d}(\vpmpm{--} w_l)) = -(Y^a_{-\mu^{w_l}} f) \WigDMat{d}(\vpmpm{--} w_l).
\end{align}

\section{The minimal-weight Whittaker functions}
\label{sect:MinWhitt}
In this section, we will prove theorem \ref{thm:MinWhitt}, and analyze some additional, degenerate cases of the Jacquet-Whittaker integral for the purpose of proving they cannot occur among the cusp forms.
First, we give some general results on Whittaker functions that were not needed for the previous paper.

\subsection{The differential equations satisfied by the Whittaker functions.}
We wish to develop raising and lowering operators on the entries of the vector-valued Whittaker functions.
Suppose $\Phi:G\to\C^{2d+1}$ is defined over the Iwasawa decomposition by
\[ \Phi(xyk) = \phi(xy) \WigDMat{d}(k), \qquad \phi(xy) = \paren{\phi_{-d}(xy), \ldots, \phi_d(xy)}, \]
then the components of $\Phi=(\Phi_{-d},\ldots,\Phi_d)$ are given by
\[ \Phi_m = \sum_{m'=-d}^d \phi_{m'} \WigD{d}{m'}{m}, \qquad \Phi = \sum_{m'=-d}^d \phi_{m'} \WigDRow{d}{m'} \]
and requiring $\Delta_i \Phi = \lambda_i \Phi$, $i=1,2$, is equivalent to
\begin{align*}
	0 =& (4\Delta_1^\circ+m' (Z_2-Z_{-2})-4\lambda_1) \phi_{m'} \\
	& \qquad -\sqrt{d(d+1)-m'(m'+1)} Z_{-1} \phi_{m'+1} \\
	& \qquad -\sqrt{d(d+1)-m'(m'-1)} Z_1 \phi_{m'-1}, \\
	0 =& (48\Delta_2^\circ-m' \mathcal{T}_0-48\lambda_2) \phi_{m'} \\
	& \qquad + \sqrt{d(d+1)-m'(m'+1)} (\mathcal{T}_1-6(m'+1)(Z_{-1}-Z_1)) \phi_{m'+1} \\
	& \qquad +\sqrt{d(d+1)-m'(m'-1)} (\mathcal{T}_{-1}-6(m'-1)(Z_{-1}-Z_1)) \phi_{m'-1},
\end{align*}
by \eqref{eq:FullDelta1} and \eqref{eq:FullDelta2}.

Suppose also that $\phi(xy) = \psi_{1,1}(x) \phi(y)$, then
\begin{align}
\label{eq:Delta1Whitt}
	0 =& (\Delta^\text{Whitt}_1-2\pi m' y_2-\lambda_1) \phi_{m'}(y) \\
	& \qquad - \pi \sqrt{d(d+1)-m'(m'+1)} y_1 \phi_{m'+1}(y) \nonumber \\
	& \qquad - \pi \sqrt{d(d+1)-m'(m'-1)} y_1 \phi_{m'-1}(y), \nonumber \\
\label{eq:Delta2Whitt}
	0 =& (\Delta^\text{Whitt}_2 +2\pi m' y_2\paren{y_1 \partial_{y_1} - 1}-\lambda_2) \phi_{m'}(y) \\
	& \qquad +\pi \sqrt{d(d+1)-m'(m'+1)} y_1 (1-y_2\partial_{y_2}-2\pi y_2) \phi_{m'+1}(y) \nonumber \\
	& \qquad +\pi \sqrt{d(d+1)-m'(m'-1)} y_1 (1-y_2\partial_{y_2}+2\pi y_2) \phi_{m'-1}(y), \nonumber
\end{align}
where
\begin{align*}
	\Delta^\text{Whitt}_1 =& -y_1^2\partial_{y_1}^2-y_2^2\partial_{y_2}^2+y_1y_2\partial_{y_1}\partial_{y_2} +4\pi^2 y_1^2 +4\pi^2 y_2^2, \\
	\Delta^\text{Whitt}_2 =& -y_1^2y_2\partial_{y_1}^2 \partial_{y_2}+y_1y_2^2\partial_{y_1} \partial_{y_2}^2-4\pi^2 y_1y_2^2 \partial_{y_1} +4\pi^2 y_1^2y_2 \partial_{y_2} \\
	& +y_1^2\partial_{y_1}^2-y_2^2\partial_{y_2}^2-4\pi^2 y_1^2+4\pi^2 y_2^2.
\end{align*}

Some rearranging gives
\begin{align}
\label{eq:SpmAct}
	S^{\pm}_{m'} \phi_{m'}(y) =& \pm \sqrt{d(d+1)-m'(m'\pm 1)} \phi_{m'\pm 1}(y), \\
\label{eq:SpmDef}
	S^\pm_{m'} :=& \frac{1}{4 \pi^2 y_1 y_2} \bigl((1-y_2\partial_{y_2}\pm 2\pi y_2) \paren{\Delta^\text{Whitt}_1-\lambda_1}+\paren{\Delta^\text{Whitt}_2-\lambda_2} \\
	& \qquad \qquad +2\pi m' y_2\paren{y_1 \partial_{y_1}+y_2\partial_{y_2}-1\mp 2\pi y_2}\bigr). \nonumber
\end{align}
One particular consequence is that the full vector-valued Whittaker function may be generated from any given entry, so any vector-valued Whittaker function is identically zero exactly when any entry is identically zero.
Precisely, we have:
\begin{lem}
\label{lem:WhittEntryRaiseLower}
For any vector-valued function $\Phi(xyk)=\psi_{1,1}(x) \phi(y) \WigDMat{d}(k)$, with \\ $\phi(y)=(\phi_{-d}(y), \ldots, \phi_d(y))$ which satisfies $\Delta_i \Phi = \lambda_i \Phi$, $i=1,2$, and any $-d \le m_1 < m_2 \le d$, we have
\[ \phi_{m_2} = C_1 S^+_{m_2-1}\cdots S^+_{m_1+1}  S^+_{m_1} \phi_{m_1}, \qquad \phi_{m_1} = C_2 S^-_{m_1+1}\cdots S^-_{m_2-1}  S^-_{m_2} \phi_{m_2}, \]
where
\[ \frac{1}{C_1} = \prod_{j=m_1}^{m_2-1} \sqrt{d(d+1)-j(j+1)}, \qquad \frac{1}{C_2} = (-1)^{m_2-m_1} \prod_{j=m_1+1}^{m_2} \sqrt{d(d+1)-j(j-1)}. \]
\end{lem}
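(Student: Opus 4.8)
The plan is to iterate the single-step identity \eqref{eq:SpmAct} and telescope the accumulated constants; since that identity already does all of the analytic work, the statement reduces to a bookkeeping induction on $m_2-m_1$. Throughout, I would first note that the coefficients appearing are nonzero in the relevant ranges: for the raising operators, $d(d+1)-j(j+1)=(d-j)(d+j+1)>0$ whenever $m_1 \le j \le m_2-1 \le d-1$, and for the lowering operators, $d(d+1)-j(j-1)=(d+j)(d-j+1)>0$ whenever $m_1+1 \le j \le m_2 \le d$. Hence at every step we may legitimately divide by the square-root factor.

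For the raising formula I would induct on $m_2$ with $m_1$ fixed. The base case $m_2=m_1+1$ is \eqref{eq:SpmAct} with the plus sign, solved for $\phi_{m_1+1}$. For the inductive step, I apply $S^+_{m_2-1}$ to the expression already obtained for $\phi_{m_2-1}$ and invoke \eqref{eq:SpmAct} once more, which converts $S^+_{m_2-1}\phi_{m_2-1}$ into $\sqrt{d(d+1)-(m_2-1)m_2}\,\phi_{m_2}$. Absorbing this new factor into the product defining $1/C_1$ yields exactly $\phi_{m_2}=C_1 S^+_{m_2-1}\cdots S^+_{m_1}\phi_{m_1}$, completing the induction.

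The lowering formula follows by the same scheme, starting from $\phi_{m_2}$ and applying the operators $S^-_{m_2}, S^-_{m_2-1}, \ldots, S^-_{m_1+1}$ in turn. The only additional point is the sign: each application of \eqref{eq:SpmAct} with the minus sign contributes a factor $-\sqrt{d(d+1)-j(j-1)}$, with $j$ running from $m_2$ down to $m_1+1$, so after $m_2-m_1$ steps the sign $(-1)^{m_2-m_1}$ accumulates precisely as in the definition of $1/C_2$.

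There is no genuine obstacle here; the only care required is to keep the index ranges straight and to track the accumulated sign so that $C_1$ and $C_2$ emerge exactly as stated. The consequence noted before the lemma — that the entire vector is generated from any single entry, so that the Whittaker function vanishes identically if and only if one entry does — is then immediate, since $C_1$ and $C_2$ are nonzero.
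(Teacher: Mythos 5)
Your proposal is correct and is exactly the argument the paper intends: the lemma is stated as an immediate iteration of \eqref{eq:SpmAct}, and your induction with the telescoping of the $\pm\sqrt{d(d+1)-j(j\pm1)}$ factors (including the sign count for $C_2$ and the positivity check guaranteeing the divisions are legitimate) reproduces the constants $C_1$ and $C_2$ precisely as given.
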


\subsection{The central entry}
\label{sect:WhittCentralEntry}
We turn now to the proof of theorem \ref{thm:MinWhitt}, which is the evaluation of the Jacquet integral at the minimal $K$-types.
As mentioned in the introduction, the proof occurs in two steps; in this subsection, we directly evaluate the Jacquet integral at the central entry, and in the next, we show the Mellin-Barnes integrals of theorem \ref{thm:MinWhitt} satisfy equations \eqref{eq:SpmAct} and \eqref{eq:Delta1Whitt} to complete the theorem.
For the central entry, we also consider the evaluation of $W^d_{-d,0}(g,\mu,\psi_{1,1})$ where $\mu$ is of the form $(d-1,0,1-d)$; we will see this cannot occur as the Whittaker function of a cusp form, but we still require knowledge of precisely when the function is identically zero.

Suppose $\mu_1-\mu_2=d-1$, $d \ge 2$, and temporarily suppose $\Re(\mu_2-\mu_3)$ is large; we will reach the cases of theorem \ref{thm:MinWhitt} by analytic continuation, below.
The cases $d=0,1$ of theorem \ref{thm:MinWhitt} may be handled similarly.
Starting from (I.3.22), we send $u_3 \mapsto u_3 \sqrt{1+u_2^2}$  to obtain
\begin{align}
\label{eq:NewI322}
	W^d_{-d,0}(I, \mu, \psi_y) =& (-1)^d \int_{\R^2} \paren{1+u_3^2}^{\frac{-1+\mu_3-\mu_1}{2}} \paren{1+u_2^2}^{\frac{-1+\mu_3-\mu_2}{2}} \mathcal{W}_{-d}\paren{y_1 \frac{\sqrt{1+u_3^2}}{\sqrt{1+u_2^2}},\mu_1-\mu_2} \\
	& \qquad \Wigd{d}{-d}{0}\paren{\frac{-u_3}{\sqrt{1+u_3^2}}} \e{-y_1 \frac{u_2 u_3}{\sqrt{1+u_2^2}}}\e{-y_2 u_2} du_2 \, du_3.
\end{align}
Then \eqref{eq:classWhittDef} and \cite[eq. 13.18.2]{DLMF} imply
\begin{align*}
	\mathcal{W}_{-d}(y, d-1) =& \frac{(2\pi)^d y^{d-1}}{(d-1)!} \exp\paren{-2\pi y}.
\end{align*}
We also have, by \eqref{eq:WigdtoJacobi} and \eqref{eq:JacPBase},
\[ \Wigd{d}{-d}{0}(x) = \Wigd{d}{0}{d}(x) = \frac{\sqrt{(2d)!}}{d!\, 2^d} \paren{1-x^2}^{\frac{d}{2}}. \]

Plugging in the two previous displays, applying (I.3.17) (or the definition of the gamma function) to each of the three exponentials, and evaluating the $u$ integrals with (I.2.27) gives
\begin{align*}
	& W^d_{-d,0}(I, \mu, \psi_y) = \\
	& (-1)^d \frac{\sqrt{(2d)!} \pi^d y_1^{d-1}}{d!\,(d-1)!} \int_{\Re(s)=\epsilon} (2\pi y_1)^{-s_1-s_3} (2\pi y_2)^{-s_2} \Gamma\paren{s_1} \Gamma\paren{s_3} \Gamma\paren{s_2} \cos \frac{\pi s_2}{2} \cos \frac{\pi s_3}{2} \\
	& \qquad B\paren{\frac{1-s_3-s_2}{2},\frac{\mu_1-\mu_2-s_1+s_2}{2}} B\paren{\frac{1-s_3}{2},\frac{d+\mu_3-\mu_2+s_1+s_3}{2}} \frac{ds}{(2\pi i)^3},
\end{align*}
where $B(a,b) = \Gamma(a) \Gamma(b)/\Gamma(a+b)$ is the Euler beta function.
We may now suppose $\mu=\paren{\frac{d-1}{2}+it,-\frac{d-1}{2}+it,-2it}$ or $\mu=(d-1,0,1-d)$, as the convergence is clear; i.e. the above integral converges to an entire function of $\mu$ by the exponential decay coming from Stirling's formula, so we have the anticipated analytic continuation.

Send $s_1 \mapsto s_1-s_3$, and apply the duplication and reflection properties of the gamma function so that
\begin{align*}
	W^d_{-d,0}(I, \mu, \psi_y) =& \frac{(-1)^d \sqrt{(2d)!} \pi^{\frac{3}{2}}}{8 \, d!\,(d-1)!} \int_{\Re(s)=\epsilon} (\pi y_1)^{d-1-s_1} (\pi y_2)^{-s_2} \frac{\Gamma\paren{\frac{s_2}{2}}\Gamma\paren{\frac{1+\mu_1-\mu_2+s_1}{2}}}{\Gamma\paren{\frac{1-s_2}{2}}\Gamma\paren{\frac{1+\mu_1-\mu_2-s_1}{2}}} \\
	& \qquad \frac{\Gamma\paren{\frac{1-s_3-s_2}{2}} \Gamma\paren{\frac{\mu_1-\mu_2-s_1+s_3+s_2}{2}} \Gamma\paren{\frac{s_3}{2}} \Gamma\paren{\frac{s_1-s_3}{2}} \Gamma\paren{\frac{1+s_1-s_3}{2}}}{\Gamma\paren{\frac{d+1+\mu_3-\mu_2+s_1-s_3}{2}}} \frac{ds}{(2\pi i)^3},
\end{align*}
using
\[ 1+\mu_1-\mu_3=d+\mu_2-\mu_3=\piecewise{\tfrac{d+1}{2}+3it & \If \mu=\paren{\frac{d-1}{2}+it,-\frac{d-1}{2}+it,-2it}, \\ 2d-1 & \If \mu=(d-1,0,1-d).} \]

We apply \eqref{eq:BarnesSecond} on $-\frac{s_3}{2}$, substitute $(s_1,s_2)\mapsto (s_1+d-1-\mu_1,s_2+\mu_3)$, and use (I.3.6) to obtain
\begin{align}
\label{eq:WhittCentralEntry}
	W^d_{-d,0}(y, \mu, \psi_{1,1}) =& \frac{(-1)^d}{4\pi^2} \int_{\Re(s)=\mathfrak{s}} (\pi y_1)^{1-s_1} (\pi y_2)^{1-s_2} \frac{G^d_0(s,\mu)}{\Lambda^*(\mu)} \frac{ds}{(2\pi i)^2},
\end{align}
with $G^d$ as in \eqref{eq:MinWhittGPSR}, and using the contour
\[ \mathfrak{s}=\piecewise{(\epsilon,\epsilon) & \If \mu=\paren{\frac{d-1}{2}+it,-\frac{d-1}{2}+it,-2it}, \\ (\epsilon,d-1+\epsilon) & \If \mu=(d-1,0,1-d).} \]

The opposite case $W^d_{d,0}(y, \mu, \psi_{1,1})=0$ follows from the pole of the gamma function of \eqref{eq:classWhittDef} in \eqref{eq:NewI322}.

\subsection{The remaining components of the Whittaker function}
We now complete the proof of theorem \ref{thm:MinWhitt}.
We continue to assume $d \ge 2$ and specify $\mu=\paren{\frac{d-1}{2}+it,-\frac{d-1}{2}+it,-2it}$.
We've computed the base case $m'=0$ directly, and the cases $m'=\pm 1$ can be verified from the $S^\pm_{m'}$ operator, so we have to compute $\abs{m'} \ge 2$.
It is sufficient to verify the terms $1 \le \abs{m'} \le d-1$ satisfy \eqref{eq:Delta1Whitt}, and in the following proof we assume $\abs{m'} \ge 2$ for convenience.
Let $m'=\varepsilon m$, then we need to verify
\begin{align*}
	0 =& \int_{\Re(s)=\mathfrak{s}} \Bigl( (\Delta^\text{Whitt}_1-2\pi m' y_2-\lambda_1(\mu)) (\pi y_1)^{1-s_1} (\pi y_2)^{1-s_2} \sum_{\ell=0}^{m} \varepsilon^\ell \binom{m}{\ell} \wtilde{G}^d((d-m,\ell), s,\mu) \\
	&- (d-m)(\pi y_1)^{1-s_1} (\pi y_2)^{1-s_2}\sum_{\ell=0}^{m+1} \varepsilon^\ell \binom{m+1}{\ell} \wtilde{G}^d((d-m-1,\ell), s,\mu) \\
	&-(d+m) (\pi y_1)^{1-s_1} (\pi y_2)^{1-s_2} \sum_{\ell=0}^{m-1} \varepsilon^\ell \binom{m-1}{\ell} \wtilde{G}^d((d-m+1,\ell), s,\mu) \Bigr)\frac{ds}{(2\pi i)^2}.
\end{align*}

We apply the differential operator and shift $s$ on the various terms so we may deal with the Mellin transform directly.
We factor out
\[ \Gamma\paren{\tfrac{d-1-\mu_1+s_1}{2}}\Gamma\paren{\tfrac{d-\mu_1+s_1}{2}} \Gamma\paren{\tfrac{\mu_1+s_2}{2}}\Gamma\paren{\tfrac{1+\mu_1+s_2}{2}} \]
which replaces the terms $\wtilde{G}^d(\ldots)$ with a polynomial times a beta function of the form
\[ B\paren{\tfrac{d-m+a-\mu_3+s_1}{2}, \tfrac{\ell+b+\mu_3+s_2}{2}}, \qquad a=0,2, \qquad b=0,1,2. \]
Now apply
\[ B(x+1,y)=B(x,y)-B(x,y+1) \]
to normalize $a \mapsto 0$, and substitute $\ell \mapsto \ell-b$ to align the beta functions.
After some algebra and removing a factor $(s_1-s_2-2it)$, we need to show
\begin{align*}
	0=& (s_2+\mu_3)B\paren{\tfrac{d-m-\mu_3+s_1}{2},\tfrac{\mu_3+s_2}{2}}+\varepsilon m(s_2+\mu_3+1) B\paren{\tfrac{d-m-\mu_3+s_1}{2},\tfrac{\mu_3+s_2+1}{2}}\\
	&+ \sum_{\ell=2}^m \varepsilon^\ell \paren{\binom{m}{\ell}(s_2+\mu_3+\ell)-\binom{m}{\ell-2}(d-m+s_1+s_2+\ell-2)} \\
	& \qquad \times B\paren{\tfrac{d-m-\mu_3+s_1}{2},\tfrac{\mu_3+s_2+\ell}{2}}\\
	&-\varepsilon^{m+1} m(d-1+s_1+s_2) B\paren{\tfrac{d-m-\mu_3+s_1}{2},\tfrac{\mu_3+s_2+m+1}{2}} \\
	&-\varepsilon^m (d+s_1+s_2) B\paren{\tfrac{d-m-\mu_3+s_1}{2},\tfrac{\mu_3+s_2+m+2}{2}}
\end{align*}
The even and odd terms of this sum then separately telescope to zero.

\subsection{The bad Whittaker functions}
\label{sect:BadWhitt}
Suppose $\mu=(d-1,0,1-d)$.
It is a well-known fact (see the corollary to \cite[lemma 15]{HC01}) that cusp forms, and hence their Fourier coefficents are bounded (as functions of $G$).
Then \eqref{eq:WhittCentralEntry} shows the Whittaker functions $W^d_{-d}(\cdot, \mu, \psi_{1,1})$ are non-zero and have bad asymptotics, i.e. they are not suitable for cusp forms since the central entry tends to infinity as $y_2\to 0$ (shift the $s$ contours to the left past the pole at $(s_1,s_2)=(0,d-1)$).
We may instead rule out any cusp form having such a Whittaker function by noticing it would necessarily have a real eigenvalue of the skew-symmetric operator $Y^0$, and this is what we do in section \ref{sect:GDCuspForms}.

\section{Going Down}
\label{sect:GoingDown}

We determine the spectral parameters and Whittaker functions of all forms which are killed by both $Y^{-1}$, $Y^{-2}$.
Equivalently, we determine vectors in $\C^{2d+1}$ which are killed by both of $Y^{-1}_\mu$, $Y^{-2}_\mu$.
The unitaricity conditions on the spectral parameters of Maass cusp forms, meaning the symmetries of the differential operators $\Delta_1$ and $\Delta_2$, imply that, for such forms, $-\wbar{\mu}$ is a permutation of $\mu$, and further, because of the absence of poles in the required functional equations of the Whittaker function, we may assume $\Re(\mu_1)\ge\Re(\mu_2)\ge\Re(\mu_3)$, so
\begin{align}
\label{eq:GDAssume}
	\mu = (i t_1, i t_2,-i(t_1+t_2)) \qquad \text{ or } \qquad \mu=(x+i t, -2i t, -x+i t),
\end{align}
where $t_1-t_2,2t_1+t_2,t_1+2t_2\ne0$, $x\ge 0$, and possibly $t=0$.

\subsection{For the power function}
Define
\begin{align}
	g_1=-\sqrt{6} (1+\mu_3) \bu^{2,+}_2+(\mu_1-\mu_2-1) \bu^{2,+}_0,
\end{align}
\begin{align}
\label{eq:g2def}
	g_2^{d,\delta,\varepsilon} =& \sum_{0 \le 2j+\delta \le d} g_{2,\delta,2j+\delta}^d \bu^{d,\varepsilon}_{2j+\delta},
\end{align}
where the coefficients are given by $g_{2,0,0}^d = \tfrac{1}{2}$, and
\begin{align}
	g_{2,k,2j+k}^d =& (-1)^{j} \prod_{i=0}^{j-1} \frac{\sqrt{(d-1-2i-k)(d-2i-k)} (3\mu_3+2d-1+2i+k)}{\sqrt{(d+1+2i+k)(d+2+2i+k)} (\mu_1-\mu_2-1-2i-k)},
\end{align}
for $0 < 2j+k \le d$.
We also define $g_3^{d,\delta,\varepsilon}$ using the same coefficients as $g_2^{d,\delta,\varepsilon}$, i.e.
\begin{align}
	g_{3,\delta,2j+\delta}^d :=g_{2,\delta,2j+\delta}^d,  \qquad 0 \le 2j+\delta < d,
\end{align}
but take
\begin{align}
\label{eq:g3LastCoefDef}
	g_{3,\delta,d}^d :=\frac{g_{2,\delta,d-2}^d}{\sqrt{d(2d-1)}}
\end{align}
to be the coefficient of $\bu^{d,\varepsilon}_d$, instead.
Lastly, for $d$ odd, define
\begin{align}
\label{eq:g4def}
	g_4^{d,\varepsilon} =& \sum_{\kappa \le 2j+\kappa \le d} g_{2,\kappa,2j+\kappa}^d \bu^{d,\varepsilon}_{2j+\kappa},
\end{align}
where $\kappa=\frac{d+1}{2}$.

\begin{prop}
\label{prop:GDPower}
	If $f\in\C^{2d+1}$ is such that
	\begin{align}
		\label{eq:GDLowering}
		Y^{-1}_\mu f=0 \qquad \text{ and } \qquad Y^{-2}_\mu f=0,
	\end{align}
	subject to \eqref{eq:GDAssume}, then at least one of the following is true:
	\begin{enumerate}
		\item $f=0$,
		\item $d\in\set{0,1}$,
		\item $d=2$ and $f$ is a multiple of $g_1$,
		\item $\mu=\paren{\frac{d-1}{2}+i t, -2i t, -\frac{d-1}{2}+i t}$, with either $d$ even or $t\ne 0$, and $f$ is a linear combination of $g_2^{d,0,\varepsilon}$ and $g_2^{d,1,-\varepsilon}$, with $\varepsilon=(-1)^d$,
		\item $\mu=\paren{\frac{d-1}{2}, 0, -\frac{d-1}{2}}$, $d$ odd, and $f$ is a linear combination of $g_2^{d,\delta,\varepsilon}$, $g_4^{d,+}$ and $g_4^{d,-}$, with $\delta\equiv\kappa+1\pmod{2}$, $\varepsilon=(-1)^\kappa$, $\kappa=\frac{d+1}{2}$,
		\item $\mu=\paren{d-1, 0, 1-d}$, and $f$ is a linear combination of $g_3^{d,1,\varepsilon}$, $g_3^{d,0,\varepsilon}$, $\bu^{d,+}_d$ and $\bu^{d,-}_d$, with $\varepsilon=(-1)^d$.
	\end{enumerate}
\end{prop}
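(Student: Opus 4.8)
The plan is to push everything through the explicit recursions \eqref{eq:FourTermReduction1} and \eqref{eq:FourTermReduction2} and to solve them strand by strand. Both $Y^{-1}_\mu$ and $Y^{-2}_\mu$ send $\bu^{d,\varepsilon}_j$ to a combination of $\bu^{d+a,\varepsilon}_{j-2}$, $\bu^{d+a,\varepsilon}_{j}$, $\bu^{d+a,\varepsilon}_{j+2}$, so each operator \emph{preserves} the sign $\varepsilon$ and the parity of the index $j$. Using the identification $\bu^{d,\varepsilon}_{-j}=\varepsilon(-1)^d\bu^{d,\varepsilon}_j$ together with the vanishing $\bu^{d,\varepsilon}_0=0$ when $\varepsilon\neq(-1)^d$, both read off from \eqref{eq:udef}, I would fix a basis $\{\bu^{d,\varepsilon}_m:0\le m\le d,\ m\equiv\delta\pmod 2\}$ for each of the (at most four) strands indexed by $(\varepsilon,\delta)$ and write $f=\sum_m c_m\bu^{d,\varepsilon}_m$. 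The two conditions \eqref{eq:GDLowering} then decouple completely across strands, and inside a strand each becomes a three-term linear recursion, stepping by $2$ in $m$, obtained by collecting the coefficient of each $\bu^{d-1,\varepsilon}_n$ respectively $\bu^{d-2,\varepsilon}_n$.

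Next I would dispose of the generic interior behavior. Eliminating one of the outer coefficients between the two three-term recursions produces a single two-term recursion relating $c_m$ to $c_{m-2}$; solving it gives exactly the ratio encoded in the product defining $g_2^{d,\delta,\varepsilon}$ in \eqref{eq:g2def}. Hence any candidate solution is, in the bulk, forced to be proportional to $g_2^{d,\delta,\varepsilon}$, and the whole problem collapses to deciding, for each strand, whether this product is consistent with the \emph{endpoint} equations that were thrown away in passing to the two-term recursion.

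The heart of the argument is therefore the boundary analysis, and this is where the six cases separate. The top equation carries the factor $\mu_1-\mu_2+1-d$ (the coefficient with which $\bu^{d}_d$ feeds $\bu^{d-2}_{d-2}$): when $\mu_1-\mu_2=d-1$ this factor vanishes, so $\bu^{d,\pm}_d$ decouples and joins the kernel while the product formula for $g_2$ acquires a zero denominator and must be truncated, its last entry being fixed by \eqref{eq:g3LastCoefDef}; this is precisely case (6), with $\mu=(d-1,0,1-d)$. The same linear factor $\mu_1-\mu_2+1-j$ vanishes at the middle index $j=\kappa=\frac{d+1}{2}$ exactly when $\mu_1-\mu_2=\frac{d-1}{2}$, which under \eqref{eq:GDAssume} forces $t=0$ and $d$ odd; there the recursion splits at $\kappa$ and the upper half-strand supplies the extra solutions $g_4^{d,\pm}$ of \eqref{eq:g4def}, giving case (5). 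In the remaining situation the bottom equation — where the fold $\bu^{d,\varepsilon}_{-j}=\varepsilon(-1)^d\bu^{d,\varepsilon}_j$ and the parity-dependent vanishing of $\bu_0$ enter — must be reconciled with the top equation, and this global consistency of the over-determined system forces $\Re(\mu_1-\mu_2)=\tfrac{d-1}{2}$, i.e. $\mu_1-\mu_3=d-1$ with $x=\frac{d-1}{2}$, while simultaneously selecting the admissible pairs $(\delta,\varepsilon)=(0,(-1)^d)$ and $(1,-(-1)^d)$; this is case (4), and if no special value of $\mu$ arises the only solution is $f=0$, case (1). The short strands are handled directly: $d\in\{0,1\}$ is case (2), and $d=2$ is exceptional because the strand is too short to be over-determined — there $Y^{-1}_\mu$ annihilates the whole $\varepsilon=+$ even strand (its image lands in $\bu^{1,+}_0=0$), leaving the one-dimensional kernel spanned by $g_1$, which is case (3).

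The main obstacle is the boundary bookkeeping in the third step rather than any single computation: one must track precisely which square-root factors and which linear factors $\mu_1-\mu_2+1\pm j$ and $3\mu_3+\cdots$ degenerate at the extreme indices, fold the negative indices correctly, and then verify that the product solution of the two-term recursion satisfies the reinstated endpoint equations exactly on the listed loci of $\mu$. The interior recursion-solving, and the verification that $g_2^{d,\delta,\varepsilon}$, $g_3^{d,\delta,\varepsilon}$, $g_4^{d,\varepsilon}$ and $\bu^{d,\pm}_d$ lie in the respective kernels, are by contrast routine telescoping once the strand structure is in place.
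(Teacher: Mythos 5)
Your overall architecture is the paper's: decompose $f$ into the four $(\delta,\varepsilon)$-strands, observe that \eqref{eq:FourTermReduction1} and \eqref{eq:FourTermReduction2} give, at each target index, two equations in the three unknowns $f^d_{j-2},f^d_j,f^d_{j+2}$, eliminate to get step-two recursions, and then sort out the degenerations at $j=0,1$, $j=\kappa$ and $j=d$. The endpoint discussion (the fold at $j\in\{0,1\}$, the decoupling of $\bu^{d,\pm}_d$ when $\mu_1-\mu_2=d-1$, the splitting at $j=\kappa$ when $\mu_1-\mu_2=\kappa-1$, the truncation \eqref{eq:g3LastCoefDef}, and the $d=2$ degeneration) is essentially the paper's Cases II--IV.

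The gap is in your second step. Eliminating $f^d_{j+2}$ and eliminating $f^d_{j-2}$ give \emph{two distinct} two-term recursions (the paper's $s_{2,j}$ and $s_{1,j}$ in \eqref{eq:Down3TRa}--\eqref{eq:Down3TRb}), and together they are equivalent to the original pair of equations at each $j$; so after fixing the bulk ratio from one of them, the other imposes a further scalar condition at \emph{every interior index}, not just at the endpoints. Calling these discarded constraints ``endpoint equations'' loses roughly half the system, and with only one recursion plus two boundary conditions you cannot conclude $f=0$ for generic $\mu$ (the paper's Case I), nor can you see where conclusion 4 comes from: for $\mu=\paren{\frac{d-1}{2}+it,-2it,-\frac{d-1}{2}+it}$ with $t\ne 0$ no factor $\mu_1-\mu_2+1\pm j$ ever vanishes, so your ``which linear factor degenerates at which index'' heuristic detects conclusions 5 and 6 but is silent on 4. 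What rescues the argument — and what you would need to supply — is the identity behind the paper's $s_{3,j}$ and $s_{4,j}$ in \eqref{eq:Down3TRc}--\eqref{eq:Down3TRd}: the compatibility of the two eliminations at step $j$ reduces, uniformly in $j$, to
\[ (\mu_1-\mu_3+1-d)\,(\mu_2-\mu_3+1-d)\, f^d_j=0 ,\]
so that either all interior coefficients vanish (generic $\mu$, leading to $f=0$ after the endpoint bookkeeping) or $\mu$ lies on the locus $\mu_1-\mu_3=d-1$ or $\mu_2-\mu_3=d-1$, which under \eqref{eq:GDAssume} is exactly the union of the loci in conclusions 4--6. Without verifying that the $j$-dependence cancels in this product, the over-determined interior system could a priori impose $O(d)$ independent conditions on $\mu$, and the case analysis you describe does not get off the ground.
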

We call such $f$ \textit{power-function minimal} for $\mu$ at weight $d$.

We note that $g_1$ in case 3 may be a false positive in the sense that it is killed by the lowering operators, but not necessarily an eigenfunction of $Y^0_\mu$, while the vectors of the other cases are all also eigenfunctions of $Y^0_\mu$.
(We will not show this directly, but it can be deduced from proposition \ref{prop:GDWhittFEs} and \eqref{eq:IntertwiningOpsY}.)
That is, if none of differences $\mu_i-\mu_j$ are $1$, then
\begin{align}
\label{eq:Ym2Y0g1}
	\sqrt{35} Y^{-2}_\mu Y^0_\mu g_1 = -8\sqrt{2}(\mu_1-\mu_2-1)(\mu_1-\mu_3-1)(\mu_2-\mu_3-1)\bu^{0,+}_0 \ne 0,
\end{align}
so the minimal weight of the corresponding principle series representation is $d=0$ instead of $d=2$, and of course, the instances where some $\mu_i-\mu_j=1$ reduce to cases 4 or 6.

\begin{proof}[Proof of proposition \ref{prop:GDPower}]
The result is trivial for $d\in\set{0,1}$, and $d=2$ is simple to compute (note that $Y^{-1}_\mu \bu^{2,\pm}_0$, $Y^{-1}_\mu\bu^{2,\pm}_2$ and $Y^{-2}_\mu\bu^{2,\pm}_1$ all vanish), so we may assume $d \ge 3$.

For clarity, we write $f^d :=f$.
We may split $f^d$ according to characters of $V$,
\begin{align*}
	f^d=&\sum_{\abs{m'}\le d} f^d_{m'} \bv^d_{m'} = \sum_{\pm,\delta\in\set{0,1}} f^{d,\delta,\pm}, \\
	f^{d,\delta,\pm}=&\sum_{2j+\delta\le d} \paren{f^d_{2j+\delta}\pm (-1)^d f^d_{-2j-\delta}} \bu^{d,\pm}_{2j+\delta} \times \piecewise{\frac{1}{2} & \If 2j+\delta=0, \\[5pt] 1 & \Otherwise,}
\end{align*}
and $f^d$ is a zero of both $Y^{-1}_\mu$ and $Y^{-2}_\mu$ exactly when all of the $f^{d,\delta,\pm}$ are, so we now fix $d$, $\mu$ and a choice of $\delta\in\set{0,1}$ and parity $\varepsilon=\pm1$ and assume $f^d$ is of the form
\begin{align*}
	f^d=&\sum_{0\le 2j+\delta\le d} f^d_{2j+\delta} \bu^{d,\varepsilon}_{2j+\delta},
\end{align*}
after relabelling the subscripts.
Let
\[ f^{d-1} = \sqrt{2d(d-1)(d+1)(2d+1)} Y^{-1}_\mu f^d \text{ and } f^{d-2} = 2\sqrt{d(d-1)(2d-1)(2d+1)} Y^{-2}_\mu f^d. \]
From \eqref{eq:FourTermReduction1} and \eqref{eq:FourTermReduction2}, their coefficients are given by
\begin{align}
\label{eq:Downfdm1}
	f^{d-1}_j =&-\sqrt{(d-1-j)(d+j)(d+1+j)(d+2+j)} (\mu_1-\mu_2-1-j) f^d_{j+2} \\
	&\qquad +2j\sqrt{(d-j)(d+j)} (3\mu_3+d) f^d_j \nonumber \\
	&\qquad+c_j \sqrt{(d-j)(d+1-j)(d+2-j)(d-1+j)} (\mu_1-\mu_2-1+j) f^d_{j-2} \nonumber \\
\label{eq:Downfdm2}
	f^{d-2}_j =&\sqrt{(d-1+j)(d+j)(d+1+j)(d+2+j)} (\mu_1-\mu_2-1-j) f^d_{j+2} \\
	&\qquad+2\sqrt{(d-1-j)(d-j)(d-1+j)(d+j)} (3\mu_3+2d-1) f^d_j \nonumber \\
	&\qquad+c_j \sqrt{(d-1-j)(d-j)(d+1-j)(d+2-j)} (\mu_1-\mu_2-1+j) f^d_{j-2}, \nonumber
\end{align}
for $j \ge 2$, where $c_j=2$ if $j=2$ and $1$ otherwise.
When $\varepsilon=-(-1)^d$, we set $f^d_0=0$.
Some care must be taken when dealing with the coefficients $f^{d-1}_j$, $f^{d-2}_j$, for $j=0,1$, as the previous expressions do not necessarily apply:
\begin{align}
\label{eq:Downfdm1j1}
	f^{d-1}_1 =&-\sqrt{(d-2)(d+1)(d+2)(d+3)} (\mu_1-\mu_2-2) f^d_3 \\
	&\qquad +\sqrt{(d-1)(d+1)} (2(3\mu_3+d)+\varepsilon(-1)^d d(\mu_1-\mu_2)) f^d_1, \nonumber \\
\label{eq:Downfdm2j1}
	f^{d-2}_1 =&\sqrt{d(d+1)(d+2)(d+3)} (\mu_1-\mu_2-2) f^d_3 \\
	&\qquad+\sqrt{d(d-2)(d-1)(d+1)} (2(3\mu_3+2d-1)+\varepsilon(-1)^d (\mu_1-\mu_2)) f^d_1, \nonumber \\
\label{eq:Downfdm1j0}
	f^{d-1}_0 =&-\sqrt{d(d-1)(d+1)(d+2)} (\mu_1-\mu_2-1) f^d_2 \, \delta_{\varepsilon=-(-1)^d}, \\
\label{eq:Downfdm2j0}
	f^{d-2}_0 =& \sqrt{d(d-1)}\Bigl(\sqrt{(d+1)(d+2)} (\mu_1-\mu_2-1) f^d_2\\
	& \qquad +2 \sqrt{d(d-1)} (3\mu_3+2d-1) f^d_0\Bigr)\delta_{\varepsilon=+(-1)^d}. \nonumber
\end{align}

Two useful linear combinations are
\begin{align}
\label{eq:Down3TRa}
	s_{1,j} :=& \tfrac{1}{2(d-1)\sqrt{d+j}}\paren{\sqrt{d-1-j} f^{d-1}_j-\sqrt{d-1+j} f^{d-2}_j}\\
	=&-\sqrt{(d+1+j)(d+2+j)} (\mu_1-\mu_2-1-j) f^d_{j+2} \nonumber \\
	&-\sqrt{(d-1-j)(d-j)} (3\mu_3+2d-1+j) f^d_j, \nonumber \\
\label{eq:Down3TRb}
	s_{2,j} :=& \tfrac{1}{2(d-1)\sqrt{d-j}}\paren{\sqrt{d-1+j} f^{d-1}_j+\sqrt{d-1-j} f^{d-2}_j}\\
	=&\sqrt{(d-1+j)(d+j)} (3\mu_3+2d-1-j) f^d_j \nonumber \\
	&+\sqrt{(d+1-j)(d+2-j)} (\mu_1-\mu_2-1+j) f^d_{j-2}, \nonumber
\end{align}
for $3 \le j \le d-2$.
The given expression \eqref{eq:Down3TRa} for $s_{1,j}$ continues to hold for $j=1,2$, and we have the following expressions for $s_{2,j}$, $j=2,1$,
\begin{align}
\label{eq:GDs22}
	s_{2,2} =&\sqrt{(d+1)(d+2)} (3\mu_3+2d-3) f^d_2 +2\sqrt{d(d-1)} (\mu_1-\mu_2+1) f^d_0, \\
\label{eq:GDs21}
	s_{2,1} =& -2f^d_1 \sqrt{d(d+1)} \times \piecewise{(\mu_2-\mu_3+1-d) & \If \varepsilon=+(-1)^d,\\ (\mu_1-\mu_3+1-d) & \If \varepsilon=-(-1)^d,}
\end{align}
but for $j=0$, it is simplest to use $f^{d-1}_0$ and $f^{d-2}_0$ directly.
Note that $s_{1,j}=s_{2,j}=0$ is fully equivalent to $f^{d-1}_j=f^{d-2}_j=0$ for each $1 \le j \le d-2$.

For $1 \le j \le d-4$, we further cancel to obtain
\begin{align}
\label{eq:Down3TRc}
	s_{3,j} :=&(3\mu_3+2d-3-j)s_{1,j}+(\mu_1-\mu_2-1-j) s_{2,j+2} \\
	=&-4 \sqrt{(d-j)(d-1-j)} (\mu_1-\mu_3+1-d)(\mu_2-\mu_3+1-d) f^d_j, \nonumber
\end{align}
and for $3 \le j \le d-2$, we have
\begin{align}
\label{eq:Down3TRd}
	s_{4,j} :=&(3\mu_3+2d-3-j)s_{1,j-2}+(\mu_1-\mu_2-1-j) s_{2,j} \\
	=&4 \sqrt{(d+j)(d-1+j)} (\mu_1-\mu_3+1-d)(\mu_2-\mu_3+1-d) f^d_j. \nonumber
\end{align}
Then
\begin{align}
\label{eq:GDPowReformulation}
	\left\{\begin{array}{c} f^{d-1}=0 \\ f^{d-2}=0 \end{array}\right\} \Leftrightarrow
	\begin{cases}
		 f^{d-1}_{d-1}=f^{d-1}_0=f^{d-2}_0=0,\\
		 s_{1,j}=0, j=1,\ldots, d-2,\\
		 s_{2,1}=s_{2,2}=0, \\
		 s_{3,j-2}=s_{4,j}=0, j=3,\ldots,d-2, \\
	\end{cases}
\end{align}
because for each $3 \le j \le d-2$ the coefficient of $s_{2,j}$ in one of $s_{3,j-2}$ or $s_{4,j}$ is non-zero.

The proof now proceeds by cases on $\mu$.

\subsubsection*{Case I:\nopunct} $\mu_1-\mu_3+1\ne d$, $\mu_2-\mu_3+1\ne d$ \\
These assumptions (recall \eqref{eq:GDAssume}) imply $\mu_1-\mu_2+1\ne d$, as well.
Then $s_{3,j}=0$ is equivalent to $f^d_j = 0$ for all $1 \le j \le d-4$, and similarly $s_{4,j}=0$ is equivalent to $f^d_j = 0$ on $3 \le j \le d-2$.
(Note that for $d \ge 6$, $\set{1 \le j \le d-4}\cup\set{3\le j\le d-2}=\set{1 \le j \le d-2}$, but the same does not hold for $3 \le d \le 5$.)

Now suppose $f^{d-1}=0$ and $f^{d-2}=0$ with $d \ge 6$.
Then \eqref{eq:Down3TRc}, \eqref{eq:Down3TRd} and \eqref{eq:GDs21} imply $f^d_j = 0$, $1 \le j \le d-2$.
In case $\varepsilon=+(-1)^d$ and $\delta=0$, then \eqref{eq:GDs22} implies $f^d_0=0$ since $\mu_1-\mu_2\ne-1$.

If $d\equiv\delta\pmod{2}$, then $s_{1,d-2}=0$ implies $f^d_d=0$ and we have $f^d=0$.
If $d\not\equiv\delta\pmod{2}$, then $f^{d-2}_{d-3}=f^{d-1}_{d-1}=0$ implies $f^d_{d-1}=0$ and we have $f^d=0$.

For $3 \le d \le 5$, first suppose $\delta=1$, and note \eqref{eq:GDs21} implies $f^d_1=0$.
Then $s_{1,1}=0$ implies $f^d_3=0$ unless $\mu_1-\mu_2=2$, in which case $\mu=(2,0,-2)$ (recall \eqref{eq:GDAssume}), so $d=4$ and $s_{2,3}=0$ again implies $f^d_3=0$.
Lastly, if $d=5$, then $s_{1,3}=0$ implies $f^d_5=0$.

Now suppose $3 \le d \le 5$ with $\delta=0$, then, as before, \eqref{eq:GDs22} implies $f^d_0=0$, and $f^{d-1}_0=f^{d-2}_0=0$ implies $f^d_2=0$ unless $\mu_1-\mu_2=1$ in which case \eqref{eq:GDs22} works since $\mu=(1,0,-1)$ implies $d\ne 3$.
When $d\in\set{4,5}$, $s_{1,2}=0$ implies $f^d_4=0$ unless $\mu_1-\mu_2=3$ in which case $f^{d-1}_4=0$ works since $d=5$ and $\mu_3=-3$.

\subsubsection*{Case II:\nopunct} $\mu=\paren{\frac{d-1}{2}+i t, -2i t, -\frac{d-1}{2}+i t}$ with $t\ne 0$ or $d$ even \\
Under the current assumptions, we have all $s_{3,j}=0$,$s_{4,j}=0$, and we use \eqref{eq:GDPowReformulation}.
When $\delta=0$, $s_{2,1}=0$ becomes trivial, and similarly, when $\delta=1$, $s_{2,2}=f^{d-1}_0=f^{d-2}_0=0$ becomes trivial.

Since $\mu_1-\mu_2-1\notin\Z$, $s_{1,j}=0$ is equivalent to
\begin{align}
\label{eq:IIRecurse}
	f^d_{j+2} =&-\frac{\sqrt{(d-1-j)(d-j)} (3\mu_3+2d-1+j)}{\sqrt{(d+1+j)(d+2+j)} (\mu_1-\mu_2-1-j)} f^d_j,
\end{align}
If $\delta=0$ and $\varepsilon=+(-1)^d$, then $s_{2,2}=0$ is redundant over $f^{d-2}_0=0$.
If $\delta=0$ and $\varepsilon=-(-1)^d$, then $f^{d-1}_0=0$ implies $f^d_2=0$ and hence $f^d=0$.
If $\delta=1$ and $\varepsilon=+(-1)^d$, then $s_{2,1}=0$ implies $f^d_1=0$ and hence $f^d=0$.
If $\delta=1$ and $\varepsilon=-(-1)^d$, then $s_{2,1}=0$ is trivial.

This completes the analysis of conclusion 4 of the proposition.
The need for $g_{2,0,0}^{d,\delta,\varepsilon}=\frac{1}{2}$ comes from the spare 2 in $f^{d-2}_0$, as compared to $s_{1,j}$, $j \ge 1$.

\subsubsection*{Case III:\nopunct} $\mu=\paren{d-1, 0, 1-d}$ \\
Since $\mu_1-\mu_2-1=-(3\mu_3+2d-1)=d-2$, this is identical to case II, except that $s_{1,d-2}=0$ is trivial, and now when $\delta=1$, $s_{2,1}=0$ implies $f^d_1=0$ unless $\varepsilon = +(-1)^d$.
The coefficient $g_{2,1,d}^d$ is undefined, but we may freely choose the coefficient of $\bu^{d,\varepsilon}_d$ since $Y^{-1}_\mu \bu^{d,\varepsilon}_d=0$ and $Y^{-2}_\mu \bu^{d,\varepsilon}_d = 0$.
Our particular choice \eqref{eq:g3LastCoefDef} makes $g_3^{d,\delta,\varepsilon}$ an eigenfunction of $Y^0_\mu$, as we will see later.
This completes the analysis of conclusion 6 of the proposition.

\subsubsection*{Case IV:\nopunct} $\mu=\paren{\frac{d-1}{2}, 0, -\frac{d-1}{2}}$, $d$ odd \\
Since $\mu_1-\mu_2-1=\kappa-2$, the case $\delta \not\equiv \kappa \pmod{2}$ is identical to case II.
In case $\delta \equiv \kappa\pmod{2}$, $s_{1,\kappa-2}=0$ implies $f^d_{\kappa-2}=0$, and recursively $s_{1,j}=0$ implies $f^d_j=0$ for $1 \le j < \kappa$ and $s_{2,2}=0$ implies $f^d_0=0$.
Then \eqref{eq:IIRecurse} applies for $j \ge \kappa$, and this completes the analysis of conclusion 5 of the proposition.
\end{proof}

\subsection{For Whittaker functions}
\label{sect:GDWhitt}
We wish to prove a version of proposition \ref{prop:GDPower} for Whittaker functions, but this is somewhat complicated by the possibility that the Whittaker function itself may be zero, and the unpleasant shape of the vectors in proposition \ref{prop:GDPower}, so we start by a careful examination of the intertwining operators.
\begin{prop}
\label{prop:GDWhittFEs}
	Suppose $d \ge 2$, and let $C$ be a non-zero constant, depending on $d$ and $\mu$, whose value may differ between occurences.
	\begin{enumerate}
		\item If $\mu=\paren{\frac{d-1}{2}+i t, -2i t, -\frac{d-1}{2}+i t}$, with either $d$ even or $t\ne 0$, then
			\begin{align*}
				g_2^{d,0,\varepsilon} =& C \bu^{d,\varepsilon}_d T^d(w_3,\mu^{w_3}),&
				g_2^{d,1,-\varepsilon} =& C \bu^{d,-\varepsilon}_d T^d(w_3,\mu^{w_3}),
			\end{align*}
			with $\varepsilon=(-1)^d$.
		\item If $\mu=\paren{\frac{d-1}{2}, 0, -\frac{d-1}{2}}$ with $d\equiv 1 \pmod{4}$, then
			\begin{align*}
				g_2^{d,0,-} =& C \bu^{d,-}_d T^d(w_3,\mu^{w_3}),&
				g_4^{d,+} =& C \bu^{d,+}_d T^d(w_3,\mu^{w_3}),&
				g_4^{d,-} =& C \bu^{d,+}_d T^d(w_4,\mu^{w_5}),
			\end{align*}
		\item If $\mu=\paren{\frac{d-1}{2}, 0, -\frac{d-1}{2}}$ with $d\equiv 3 \pmod{4}$, then
			\begin{align*}
				g_2^{d,1,+} =& C \bu^{d,+}_d T^d(w_3,\mu^{w_3}),&
				g_4^{d,-} =& C \bu^{d,-}_d T^d(w_3,\mu^{w_3}), &
				g_4^{d,+} =& C \bu^{d,+}_{d-1} T^d(w_4,\mu^{w_5}),&
			\end{align*}
		\item If $\mu=\paren{d-1, 0, 1-d}$, then
			\begin{align*}
				g_3^{d,1,\varepsilon} =& C \bu^{d,\varepsilon}_{d-1} T^d(w_3,\mu^{w_3}),&
				g_3^{d,0,\varepsilon} =& C \bu^{d,\varepsilon}_d T^d(w_3,\mu^{w_3}),
			\end{align*}
		with $\varepsilon=(-1)^d$.
	\end{enumerate}
\end{prop}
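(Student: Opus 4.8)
The plan is to prove each identity by pairing it on the right with $W^d(\cdot,\mu,\psi_{1,1})$, which turns a statement about vectors in $\C^{2d+1}$ into one about Whittaker functions, and then to reverse the implication using the classification of power-function-minimal vectors in Proposition~\ref{prop:GDPower}. In every case the second argument $\nu$ of $T^d(w,\nu)$ is arranged so that $\nu^w=\mu$: for $w=w_3$ this is automatic since $w_3$ is an involution, and for the pair $w_4,w_5$ one has $(\mu^{w_5})^{w_4}=\mu$. The functional equation \eqref{eq:WhittFEs} then collapses $T^d(w,\nu)W^d(\cdot,\mu,\psi_{1,1})=W^d(\cdot,\nu,\psi_{1,1})$, so the right-hand side of each claim, paired with $W^d$, becomes an extreme-entry Whittaker function $\bu^{d,\varepsilon}_m W^d(\cdot,\nu,\psi_{1,1})$ at a permuted parameter. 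The working strategy is thus to show that the vector $\bu^{d,\varepsilon}_m T^d(w,\nu)$ is (i) power-function-minimal for $\mu$, (ii) nonzero, and (iii) of the same $V$-character as the target $g$-vector; Proposition~\ref{prop:GDPower} then identifies it, and (ii) pins down $C\neq0$.

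For the identities of shape $g_2^{d,\cdot}=C\,\bu^{d,\varepsilon}_d T^d(w_3,\mu^{w_3})$ (case 1 and the first identity of cases 2 and 3), this runs cleanly because $\mu_1-\mu_3=d-1$, hence $(\mu^{w_3})_1-(\mu^{w_3})_2=d-1$. Evaluating \eqref{eq:FourTermReduction1} and \eqref{eq:FourTermReduction2} at $j=d$ shows $Y^{-1}_{\mu^{w_3}}\bu^{d,\varepsilon}_d=Y^{-2}_{\mu^{w_3}}\bu^{d,\varepsilon}_d=0$, the only surviving coefficient being proportional to $(\mu^{w_3})_1-(\mu^{w_3})_2+1-d$. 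The intertwining relation \eqref{eq:IntertwiningOpsY} then gives $Y^{-a}_\mu(\bu^{d,\varepsilon}_d T^d(w_3,\mu^{w_3}))=(Y^{-a}_{\mu^{w_3}}\bu^{d,\varepsilon}_d)T^{d-a}(w_3,\mu^{w_3})=0$ with no invertibility required, so (i) holds; (iii) follows from \eqref{eq:buDv2} together with the conjugation rules \eqref{eq:wvwinv}, since right multiplication by $T^d(w_3,\mu^{w_3})$ permutes the $V$-character through $w_3$ as in (I.3.27); and (ii) is just $\bu^{d,\varepsilon}_d W^d(\cdot,\mu^{w_3},\psi_{1,1})\not\equiv0$, valid because $\mu^{w_3}$ is exactly the standard form of Theorem~\ref{thm:MinWhitt}, where $W^d_{-d}\not\equiv0$ while $W^d_{d,m}=0$. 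In these parameter families each $V$-character contains only one minimal vector, so the identification is immediate.

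The remaining identities — all of case 4, where $\mu=(d-1,0,1-d)$ gives $\mu_1-\mu_3=2(d-1)$, and the $g_4$ identities attached to $w_4,w_5$ — are the crux, for two reasons. First, $\bu^{d,\varepsilon}_m$ is \emph{not} killed by the lowering operators at the shifted parameter (e.g. $Y^{-1}_{\mu^{w_3}}\bu^{d,\varepsilon}_d\propto(d-1)\bu^{d-1,\varepsilon}_{d-2}\neq0$), so minimality of $\bu^{d,\varepsilon}_m T^d(w,\nu)$ must instead come from a cancellation: via \eqref{eq:IntertwiningOpsY} the image $Y^{-a}_\nu\bu^{d,\varepsilon}_m$ must lie in the kernel of the lower-weight matrix $T^{d-a}(w,\nu)$. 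That matrix is genuinely singular, since by \eqref{eq:Tdw3} and \eqref{eq:WhittGammas} the diagonal factor $\Gamma^{d-a}_{\mathcal{W}}(d-1,+1)$ acquires vanishing entries precisely when the relevant difference equals the integer $d-1$. I would verify the needed kernel relations (such as $\bu^{d-1,\varepsilon}_{d-2}T^{d-1}(w_3,\mu^{w_3})=0$ and its $Y^{-2}$-analogue) directly from the explicit form of $T^d(w_3,\cdot)$, tracking those zero entries through the conjugation by $\WigDMat{d-1}(\vpmpm{--}w_l)$ and using \eqref{eq:buDv2} and \eqref{eq:buSignReduct}; the composition $T^d(w_4,\mu^{w_5})=T^d(w_3,\mu^{w_5})\,T^d(w_2,\mu^{w_2})$ from \eqref{eq:TdCompose}, with $T^d(w_2,\cdot)$ diagonal, should reduce the $w_4,w_5$ computations to the $w_3$ ones. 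Second, in case 4 the minimal space of one $V$-character is two-dimensional, spanned by a $g_3$-vector and an extreme vector $\bu^{d,\varepsilon}_d$; to isolate $g_3$ I would additionally match the $Y^0_\mu$-eigenvalue, using that both $g_3$ (by the choice \eqref{eq:g3LastCoefDef}) and $\bu^{d,\varepsilon}_d$ (its $\bu_{d-2}$-coefficient in \eqref{eq:ThreeTermRecursion} vanishing since $\mu_1-\mu_2=d-1$) are $Y^0_\mu$-eigenfunctions with distinct eigenvalues.

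The main obstacle is exactly this singular-intertwining-operator behavior in the degenerate parameters. Because some $\mu_i-\mu_j\in\Z$, the Whittaker linear-independence Lemma~\ref{eq:WhittLinIdepn} is unavailable, so I cannot deduce the vector identity from the (readily checked) equality of the paired Whittaker functions, nor read off nonvanishing for free. I expect to secure nonvanishing (step (ii)) in these cases by exhibiting $\bu^{d,\varepsilon}_m W^d(\cdot,\nu,\psi_{1,1})$ as a nonzero function through Theorem~\ref{thm:MinWhitt}, invoking in particular the analysis of section~\ref{sect:BadWhitt}, which already treats $\mu=(d-1,0,1-d)$ and shows $W^d_{-d}\not\equiv0$, possibly after one more pass through \eqref{eq:WhittFEs} to bring $\nu$ into standard form; the duality \eqref{eq:YamuDuality} offers an alternate route for the $w_4,w_5$ bookkeeping. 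Once minimality and nonvanishing are in hand, the character computation via \eqref{eq:buDv2} and \eqref{eq:wvwinv} — supplemented by the $Y^0_\mu$-eigenvalue in the single two-dimensional character class of case 4 — lets Proposition~\ref{prop:GDPower} close every case.
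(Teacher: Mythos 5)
Your overall strategy — identify $\bu^{d,\varepsilon}_m T^d(w,\nu)$ indirectly, by showing it is power-function minimal, nonzero, and of the right $V$-character, and then invoking proposition \ref{prop:GDPower} — is genuinely different from what the paper does, and it does work in the non-degenerate situations: in case 1, and for the $w_3$-identities of cases 2 and 3, the shifted parameter satisfies $(\mu^{w})_1-(\mu^{w})_2=d-1$, so $\bu^{d,\varepsilon}_d$ is itself killed by $Y^{-1}_{\mu^{w}}$ and $Y^{-2}_{\mu^{w}}$, minimality transfers through \eqref{eq:IntertwiningOpsY} with no invertibility needed, each $V$-character class of minimal vectors is one-dimensional, and nonvanishing comes from theorem \ref{thm:MinWhitt}. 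The paper instead computes all four vectors $\bu^{d,\pm}_d T^d(w_3,\cdot)$, $\bu^{d,\pm}_{d-1}T^d(w_3,\cdot)$, $\bu^{d,\pm}_{d}T^d(w_4,\cdot)$, $\bu^{d,\pm}_{d-1}T^d(w_4,\cdot)$ explicitly, by reducing $\Gamma^d_\mathcal{W}(u,+1)$ to $\mathcal{W}^d(0,-u)$ via \eqref{eq:GammaWtoClassWhitt}, evaluating the first and last two rows of $F^d(u)=\WigDMat{d}(w_3)\Dtildek{d}{-i}\mathcal{W}^d(0,-u)\WigDMat{d}(w_3)$ through Jacobi-polynomial beta integrals \eqref{eq:FddEval}--\eqref{eq:Fddm1Eval}, and matching ratios of successive coefficients against those of the $g$-vectors; the degenerate parameters are then handled uniformly by analytic continuation, with the gamma-factor poles automatically producing the correct support.

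The gap is in exactly the places you flag as the crux, and it is not merely deferred bookkeeping. First, the ``kernel relations'' such as $(Y^{-a}_{\mu^{w_3}}\bu^{d,\varepsilon}_d)\,T^{d-a}(w_3,\mu^{w_3})=0$ cannot be obtained by ``tracking zero entries'' of the diagonal factor through the conjugation: the conjugating matrix $\WigDMat{d-a}(\vpmpm{--}w_l)$ is dense, so determining which row vectors land on the vanishing diagonal positions of $\Gamma^{d-a}_\mathcal{W}(d-1,+1)$ is equivalent to computing rows of $F^{d-a}(u)$ — i.e.\ the very Jacobi-polynomial computation that constitutes the paper's proof. Your plan therefore circles back to the explicit computation it was meant to avoid, without carrying it out. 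Second, the $Y^0_\mu$-eigenvalue argument you propose to isolate $g_3^{d,0,\varepsilon}$ inside its two-dimensional character class in case 4 has a logical hole: by \eqref{eq:IntertwiningOpsY}, $Y^0_\mu\bigl(\bu^{d,\varepsilon}_d T^d(w_3,\mu^{w_3})\bigr)=(Y^0_{\mu^{w_3}}\bu^{d,\varepsilon}_d)T^d(w_3,\mu^{w_3})$, but at $\mu^{w_3}=(d-1,1-d,0)$ one has $(\mu^{w_3})_1-(\mu^{w_3})_2+1-d=d-1\ne0$, so by \eqref{eq:ThreeTermRecursion} the vector $\bu^{d,\varepsilon}_d$ is \emph{not} a $Y^0_{\mu^{w_3}}$-eigenvector — its image has a genuine $\bu^{d,\varepsilon}_{d-2}$ component. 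Hence the eigenvector property of $\bu^{d,\varepsilon}_d T^d(w_3,\mu^{w_3})$, which your identification needs, would itself require knowing that a specific combination of $\bu^{d,\varepsilon}_{d-2}$ and $\bu^{d,\varepsilon}_d$ lies in the left kernel of $T^d(w_3,\mu^{w_3})$ — again the same unperformed explicit computation. Until those kernel facts are actually established (by something like the paper's evaluation of $F^d(u)$), cases 2--4 and all the $w_4$-identities remain unproved.
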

Note: We have supressed the constants for purely aesthetic reasons, their values may be extracted from the computations below, but they are irrelevant in the context of the current paper.
To be precise, the proof proceeds by comparing ratios of successive coefficients, and its use in theorem \ref{thm:GDCusp} is such that the non-zero constant is simply included in the (scalar) Fourier-Whittaker coefficients.

We note that some care must be taken in the use of the intertwining operators when two of the complex parameters simultaneously encounter a singularity.
This occurs, e.g. in case 3 of the proposition when considering $T^d(w_4,\mu^{w_5})$ since both $\mu_1-\mu_3$ and $\mu_1-\mu_2$ are integral, and we consider this case in particular as an example at the end of the proof.

\begin{proof}[Proof of proposition \ref{prop:GDWhittFEs}]
We wish to compute
\[ \bu^{d,\pm}_d T^d(w_3,\mu^{w_3}), \quad \bu^{d,\pm}_{d-1} T^d(w_3,\mu^{w_3}), \quad \bu^{d,+}_d T^d(w_4,\mu^{w_5}), \quad \bu^{d,-}_{d-1} T^d(w_4,\mu^{w_5}) \]
for all $d$ and $\mu$.
We give the general proceedure first, then illustrate with an example below.

Note that
\[ T^d(w_4,\mu^{w_5}) = T^d(w_3,\mu^{w_5}) T^d(w_2,\mu^{w_2}), \]
and $T^d(w_2,\mu^{w_2})$ is diagonal and satisfies a symmetry relation under $(m,m)\mapsto(-m,-m)$, so it suffices to compute
\[ \bu^{d,\pm}_d \WigDMat{d}(w_l) \Gamma^d_\mathcal{W}(u,+1) \WigDMat{d}(w_l), \qquad  \bu^{d,\pm}_{d-1} \WigDMat{d}(w_l) \Gamma^d_\mathcal{W}(u,+1) \WigDMat{d}(w_l). \]

There is a coincidence of form among the intertwining operators $T^d(w,\mu)$, the constant terms of the Eisenstein series and the Whittaker functions at a degenerate character:
Superficially, this is because they are all generated by compositions (compare \eqref{eq:TdCompose} and (I.4.12)) of analogous diagonal matrices (compare (I.2.18), \eqref{eq:Tdw2}, (I.2.20), the definition of $M^d$ in section I.4.4.1 and (I.3.11)).
More fundamentally, the fact that composition of the constant terms of the minimal parabolic Eisenstein series, which are the Whittaker functions $\Sigma^d_{\chipmpm{++}} W^d(I,w,\mu,\psi_{00})$ (see (I.3.3)), gives the functional equations of the Eisenstein series, and hence also the functional equations of its non-degenerate Fourier coefficients, which are the Whittaker functions $\Sigma^d_{\chipmpm{++}} W^d(g,\mu,\psi_{1,1})$, implies the relationship precisely, at least in the case $\chi=\chipmpm{++}$.
We use this commonality here in the form
\begin{align}
\label{eq:GammaWtoClassWhitt}
	&\Gamma_\mathcal{W}^d(u,+1) = \\
	& \frac{i\,\Gamma(1+u)}{2^{1+u}\pi} \paren{\exp\paren{\tfrac{i\pi u}{2}} \WigDMat{d}(\vpmpm{++})-\exp\paren{-\tfrac{i\pi u}{2}} \WigDMat{d}(\vpmpm{-+})} \Dtildek{d}{-i}\mathcal{W}^d(0,-u). \nonumber
\end{align}
In fact, from \eqref{eq:WigDv}, \eqref{eq:WigDPrimary} and \eqref{eq:classWhittDef}, the (diagonal) entry at row $m$ of the right-hand side is
\[ 2i^m\frac{\Gamma(1+u)\Gamma(-u)}{\Gamma\paren{\frac{1-u+m}{2}} \Gamma\paren{\frac{1-u-m}{2}}} \times \piecewise{-\sin\paren{\pi\frac{u}{2}} & \text{for even }m,\\ i \cos\paren{\pi\frac{u}{2}} & \text{for odd }m,} \]
and the reflection formula for the gamma function shows the quotient formed by dividing this by $\Gamma^d_{\mathcal{W},m,m}(u,+1)$ is 1.

From \eqref{eq:w2toRi} and \eqref{eq:WigDv}, we see that conjugating a diagonal matrix by $\WigDMat{d}(w_2)$ simply reverses the order of the diagonal entries.
So using \eqref{eq:GammaWtoClassWhitt}, \eqref{eq:classWhittDef} and the results of section \ref{sect:WeylV} (or by the reflection formula, see (I.2.20)), we can see
\begin{align*}
	\WigDMat{d}(w_2)\Gamma_\mathcal{W}^d(u,+1)\WigDMat{d}(w_2) =& \WigDMat{d}(\vpmpm{-+}) \Gamma_\mathcal{W}^d(u,+1).
\end{align*}
Applying this back in \eqref{eq:GammaWtoClassWhitt}, we have
\begin{align}
\label{eq:TdtoFd}
	&\WigDMat{d}(\vpmpm{--}w_l)\Gamma_\mathcal{W}^d(u,+1) \WigDMat{d}(w_l\vpmpm{--}) =\\
	& \frac{i\,\Gamma(1+u)}{2^{1+u}\pi} \Dtildek{d}{i} \paren{\exp\paren{\tfrac{i\pi u}{2}} \WigDMat{d}(\vpmpm{+-})-\exp\paren{-\tfrac{i\pi u}{2}} \WigDMat{d}(\vpmpm{-+})} F^d(u) \Dtildek{d}{i}, \nonumber
\end{align}
\begin{align}
	F^d(u) :=& \WigDMat{d}(w_3) \Dtildek{d}{-i}\mathcal{W}^d(0,-u) \WigDMat{d}(w_3).
\end{align}

So we need to compute the first and last two rows of the matrix-valued function
\begin{align*}
	F^d(u) =& \int_{-\infty}^\infty (1+x^2)^{\frac{-1+u}{2}} \WigDMat{d}\paren{\tildek{1,\frac{x+i}{\sqrt{1+x^2}},1}} dx,
\end{align*}
(using $\tildek{e^{i\alpha},e^{i\beta},e^{i\gamma}}=k(\alpha,\beta,\gamma)$ as in (I.2.5)) which has components
\begin{align*}
	F^d_{m',m}(u) = \int_{-1}^1 (1-x^2)^{-1-\frac{u}{2}} \Wigd{d}{m'}{m}(x) dx,
\end{align*}
recalling \eqref{eq:kabcDef}, \eqref{eq:WigDPrimary}, \eqref{eq:classWhittDef} and \eqref{eq:WigdDef}.
For the moment, we assume $-\Re(u) > 1$ so this and the subsequent integrals converge.

We use \cite[eq. 3.196.3]{GradRyzh} in the form
\begin{align}
	\int_{-1}^1 (1-x)^{a-1} (1+x)^{b-1} dx =& 2^{a+b-1} B(a,b), & \Re(a),\Re(b)>0,
\end{align}
where $B(a,b)$ is again the Euler beta function.
Then from \eqref{eq:WigdtoJacobi} and \eqref{eq:JacPBase}, we have
\begin{align}
\label{eq:FddEval}
	F^d_{m',d}(u) =& 2^{-d} \pi^{\frac{1}{2}} \sqrt{\frac{(2d)!}{(d+m)!\,(d-m)!}} \frac{\Gamma\paren{\frac{d-m'-u}{2}}\Gamma\paren{\frac{d+m'-u}{2}}}{\Gamma\paren{\frac{d-u}{2}}\Gamma\paren{\frac{d+1-u}{2}}}.
\end{align}
When $m=d-1$, \eqref{eq:JacPBase} becomes $\JacobiP{1}{d-1-m'}{d-1+m'}(x) = d x-m'$, so
\begin{align}
\label{eq:Fddm1Eval}
	\frac{F^d_{m',d-1}(u)}{2^{1-d} \sqrt{\frac{(2d-1)!}{(d+m')! \, (d-m')!}}} =& d \int_{-1}^1 \paren{1-x}^{-1+\frac{d-1-m'-u}{2}} \paren{1+x}^{-1+\frac{d+1+m'-u}{2}} dx \\
	& - d\int_{-1}^1 \paren{1-x}^{-1+\frac{d-1-m'-u}{2}} \paren{1+x}^{-1+\frac{d-1+m'-u}{2}} dx \nonumber\\
	& -m' \int_{-1}^1 \paren{1-x}^{-1+\frac{d-1-m'-u}{2}} \paren{1+x}^{-1+\frac{d-1+m'-u}{2}} dx \nonumber\\
	=& -m'\pi^{\frac{1}{2}} \frac{\Gamma\paren{\frac{d-1-m'-u}{2}}\Gamma\paren{\frac{d-1+m'-u}{2}}\Gamma\paren{\frac{1-u}{2}}}{\Gamma\paren{\frac{d-u}{2}}\Gamma\paren{\frac{d+1-u}{2}}\Gamma\paren{-\frac{u+1}{2}}}, \nonumber
\end{align}
using the usual recurrence relation of the gamma function.

We may deduce from the symmetries \eqref{eq:WigdSymms} with \eqref{eq:FddEval} and \eqref{eq:Fddm1Eval} that
\begin{align*}
	F^d_{\pm d,m}(u) =& (\mp 1)^{d+m} F^d_{\pm m,d}(u)= (\mp 1)^{d+m} F^d_{\abs{m},d}(u), \\
	F^d_{\pm (d-1),m}(u) =& (\mp 1)^{d-1+m} F^d_{\pm m,d-1}(u) = - \sgn(m) (\mp 1)^{d+m} F^d_{\abs{m},d-1}(u),
\end{align*}
and it follows that for $\delta\in\set{0,1}$
\begin{align}
\label{eq:ITOpsSignReduct1}
	\bu^{d,(-1)^\delta}_d F^d(u) =& \tfrac{1}{2} F^d_d(u) \pm(-1)^d \tfrac{1}{2} F^d_{-d}(u) = 2 \sum_{0\le m \equiv \delta} c_m F^d_{m,d}(u) (-1)^{d+m} \bu^{d,(-1)^d}_m \\
\label{eq:ITOpsSignReduct2}
	\bu^{d,(-1)^\delta}_{d-1} F^d(u) =& 2\sum_{0\le m \equiv 1-\delta} c_m F^d_{m,d-1}(u) (-1)^{d-1+m} \bu^{d,(-1)^{d-1}}_m
\end{align}
where $c_0=\frac{1}{2}$, $c_m=1,m\ne0$.

Now if we define $\delta,\eta\in\set{0,1}$ for a choice of the parity $\pm$ by $\pm1=(-1)^{d+\delta}=(-1)^\eta$, then \eqref{eq:TdtoFd}, \eqref{eq:buSignReduct} and \eqref{eq:ITOpsSignReduct1} imply
\begin{align}
\label{eq:uddw3FE}
	& \bu^{d,\pm}_d \WigDMat{d}(\vpmpm{--}w_l)\Gamma_\mathcal{W}^d(u,+1) \WigDMat{d}(w_l\vpmpm{--}) \\
	&= \pm(-1)^d \frac{i^{1-d}\Gamma(1+u)}{2^{1+u}\pi} \paren{\exp\paren{\tfrac{i\pi u}{2}} \mp \exp\paren{-\tfrac{i\pi u}{2}}} \bu^{d,\pm(-1)^d}_d F^d(u) \WigDMat{d}(w_2) \nonumber \\
	&= \frac{i^{d+\eta}}{2^{d-1}} \frac{\Gamma\paren{\frac{1+\eta+u}{2}}}{\Gamma\paren{\frac{\eta-u}{2}}} \sum_{0\le m \equiv \delta} c_m \bu^{d,\pm}_m i^{-m} \sqrt{\frac{(2d)!}{(d+m)!\,(d-m)!}} \frac{\Gamma\paren{\frac{d-m-u}{2}}\Gamma\paren{\frac{d+m-u}{2}}}{\Gamma\paren{\frac{d-u}{2}}\Gamma\paren{\frac{d+1-u}{2}}}. \nonumber
\end{align}
Here, we have used the reflection and duplication properties of the gamma function to simplify the leading coefficient, but again, this value is not actually relevant to the rest of the paper.
Similarly
\begin{align}
\label{eq:uddm1w3FE}
	& \bu^{d,\pm}_{d-1} \WigDMat{d}(\vpmpm{--}w_l) \Gamma_\mathcal{W}^d(u,+1) \WigDMat{d}(w_l\vpmpm{--}) \\
	&= \frac{i^{d+1+\eta}}{2^{d-2}} \frac{\Gamma\paren{\frac{1+\eta+u}{2}}\Gamma\paren{\frac{1-u}{2}}}{\Gamma\paren{\frac{\eta-u}{2}}\Gamma\paren{-\frac{u+1}{2}}} \sum_{0\le m \equiv 1-\delta} c_m \bu^{d,\pm}_m i^{-m} m \nonumber \\
	& \qquad \times \sqrt{\frac{(2d-1)!}{(d+m)! \, (d-m)!}} \frac{\Gamma\paren{\frac{d-1-m-u}{2}}\Gamma\paren{\frac{d-1+m-u}{2}}}{\Gamma\paren{\frac{d-u}{2}}\Gamma\paren{\frac{d+1-u}{2}}}, \nonumber
\end{align}
and using
\[ \bu^{d,\pm}_m \Gamma_\mathcal{W}^d(u,+1) = \frac{\Gamma\paren{\frac{1-m+u}{2}}}{\Gamma\paren{\frac{1-m-u}{2}}} \bu^{d,\pm(-1)^m}_m, \qquad 0 \le m \le d \]
(which follows from the reflection formula, see (I.2.20)), we have
\begin{align}
\label{eq:uddw4FE}
	& \bu^{d,\pm}_d \WigDMat{d}(\vpmpm{--}w_l) \Gamma_\mathcal{W}^d(u_1,+1) \WigDMat{d}(w_l\vpmpm{--}) \Gamma_\mathcal{W}^d(u_2,+1) \\
	&= \frac{i^{d+\eta}}{2^{d-1}} \frac{\Gamma\paren{\frac{1+\eta+u_1}{2}}}{\Gamma\paren{\frac{\eta-u_1}{2}}} \sum_{0\le m \equiv \delta} c_m \bu^{d,\varepsilon}_m i^{-m} \nonumber \\
	& \qquad \times \sqrt{\frac{(2d)!}{(d+m)!\,(d-m)!}} \frac{\Gamma\paren{\frac{d-m-u_1}{2}}\Gamma\paren{\frac{d+m-u_1}{2}}}{\Gamma\paren{\frac{d-u_1}{2}}\Gamma\paren{\frac{d+1-u_1}{2}}} \frac{\Gamma\paren{\frac{1-m+u_2}{2}}}{\Gamma\paren{\frac{1-m-u_2}{2}}}, \nonumber
\end{align}
\begin{align}
\label{eq:uddm1w4FE}
	& \bu^{d,\pm}_{d-1} \WigDMat{d}(\vpmpm{--}w_l)\Gamma_\mathcal{W}^d(u_1,+1) \WigDMat{d}(w_l\vpmpm{--}) \Gamma_\mathcal{W}^d(u_2,+1) \\
	&= \frac{i^{d+1+\eta}}{2^{d-2}} \frac{\Gamma\paren{\frac{1+\eta+u_1}{2}}\Gamma\paren{\frac{1-u_1}{2}}}{\Gamma\paren{\frac{\eta-u_1}{2}}\Gamma\paren{-\frac{u_1+1}{2}}} \sum_{0\le m \equiv 1-\delta} c_m \bu^{d,-\varepsilon}_m i^{-m} m \nonumber \\
	& \qquad \times \sqrt{\frac{(2d-1)!}{(d+m)! \, (d-m)!}} \frac{\Gamma\paren{\frac{d-1-m-u_1}{2}}\Gamma\paren{\frac{d-1+m-u_1}{2}}}{\Gamma\paren{\frac{d-u_1}{2}}\Gamma\paren{\frac{d+1-u_1}{2}}} \frac{\Gamma\paren{\frac{1-m+u_2}{2}}}{\Gamma\paren{\frac{1-m-u_2}{2}}}, \nonumber
\end{align}
where $\varepsilon=(-1)^d$.

One can check that the parities $\pm$ and $\delta$ match between the preceeding formulas and the claims of the theorem.
Note that when $u=\mu_2-\mu_3=\frac{d-1}{2}$ is an integer with $u\equiv \eta\equiv d-\delta \pmod{2}$, then the coefficients of $\bu^{d,\pm}_m$ are zero unless $d-m-u\le 0$ in the first formula or $d-1-m-u\le0$ in the second by the poles of the gamma functions; here we are not directly evaluating the gamma functions at a value of $u$, but taking the value of the whole meromorphic function at $u$.
One may compute the ratio of the coefficient of $\bu^{d,\cdot}_{m+2}$ to the coefficient of $\bu^{d,\cdot}_{m}$ in \eqref{eq:uddw3FE}-\eqref{eq:uddm1w4FE}, giving
\begin{align}
	& -\sqrt{\frac{(d-m) (d-1-m)}{(d+2+m)(d+1+m)}} \frac{(d-u+m)}{(d-2-u-m)},\\
	& -\sqrt{\frac{(d-m) (d-1-m)}{(d+2+m)(d+1+m)}} \frac{(m+2)(d-1-u+m)}{m(d-3-u-m)},\\
	& -\sqrt{\frac{(d-m) (d-1-m)}{(d+2+m)(d+1+m)}} \frac{(d-u_1+m)(-1-m-u_2)}{(d-2-u_1-m)(-1-m+u_2)},\\
\label{eq:uddm1w4FErat}
	& -\sqrt{\frac{(d-m) (d-1-m)}{(d+2+m)(d+1+m)}} \frac{(m+2)(d-1-u_1+m)(-1-m-u_2)}{m(d-3-u_1-m)(-1-m+u_2)},
\end{align}
respectively, with the understanding that the ratio is to be multiplied by 2 when $m=0$ (to accomodate $c_2/c_0=2$).
The proposition then follows by applying the explicit form of $\mu$ in each case with $u=\mu_2-\mu_3$, $u_1=\mu_1-\mu_3$, $u_2=\mu_1-\mu_2$.

As an example, consider case 3 of the proposition where $\mu=\paren{\frac{d-1}{2},0,-\frac{d-1}{2}}$ with $d\equiv 3\pmod{4}$.
Using the table of $\mu^w$ given in section I.2.1 and the definition \eqref{eq:Tdw2}, \eqref{eq:Tdw3} and \eqref{eq:TdCompose} of $T^d(w,\mu)$, we have
\begin{align*}
	\bu^{d,+}_{d-1} T^d(w_4,\mu^{w_5}) =& \bu^{d,+}_{d-1} T^d(w_3,\mu^{w_5}) T^d(w_2,\mu^{w_2}) \\
	=& \pi^{-\frac{3}{2}(d-1)} \bu^{d,+}_{d-1} \WigDMat{d}(\vpmpm{--}w_l)\Gamma_\mathcal{W}^d(d-1,+1) \WigDMat{d}(w_l\vpmpm{--}) \Gamma_\mathcal{W}^d(\tfrac{d-1}{2},+1)
\end{align*}
The coefficients of $\bu^{d,+}_m$ in $g_4^{d,+}$ (recall \eqref{eq:g4def}) are supported on $\frac{d+1}{2} \le m \equiv \frac{d+1}{2} \equiv 0 \pmod{2}$, and we compare this to \eqref{eq:uddm1w4FE} with $u_1=d-1$, $u_2=\frac{d-1}{2}$ (or rather, the analytic continuation to this point), $\delta=1$, $\eta=0$ and $\varepsilon=-1$.
As mentioned in the previous paragraph, \eqref{eq:uddm1w4FE} has a removable singularity at this $(u_1,u_2)$-point, and the summand is zero unless $m \ge \frac{d+1}{2}$, so the support of the coefficients matches that of $g_4^{d,+}$.
The ratio of successive coefficients \eqref{eq:uddm1w4FErat} reduces to
\begin{align*}
	& \sqrt{\frac{(d-m) (d-1-m)}{(d+2+m)(d+1+m)}} \frac{(m+\frac{d+1}{2})}{(m-\frac{d-3}{2})},
\end{align*}
and this matches the ratio $g_{2,\kappa,2j+\kappa+2}^d/g_{2,\kappa,2j+\kappa}^d$ with $m=2j+\kappa$.
So we conclude $g_4^{d,+}$ and $\bu^{d,+}_{d-1} T^d(w_4,\mu^{w_5})$ are the same, up to a non-zero constant, and since $g^d_{2,\kappa,0}=1$, we conclude the value of the constant is the coefficient of $\bu^{d,+}_m$ in $\bu^{d,+}_{d-1} T^d(w_4,\mu^{w_5})$ at $m=\kappa$, which is
\[ C = \pi^{-\frac{3}{2}(d-1)} 2^{3-2d} d! \sqrt{\frac{(2d-1)! \, (\kappa-1)!}{(3\kappa-1)!}}. \]
Here we have written the coefficient $C=C(0,0)$ of $\bu^{d,+}_\kappa$ in \eqref{eq:uddm1w4FE} by taking $u_1=d-1+a_1$ and $u_2=\frac{d-1}{2}+a_2$ and applying the reflection formula for the gamma functions (keeping in mind $d\equiv 3\pmod{4}$) so that
\[ C(a_1,a_2) =\frac{2^{1-2d-2a_1} (d+1)}{\pi^{\frac{3}{2}(d-1)+a_1+a_2}} \sqrt{\frac{(2d-1)!}{(\kappa-1)! \, (3\kappa-1)!}} \frac{\Gamma\paren{\frac{\kappa-a_1}{2}}\Gamma\paren{\frac{d+1+a_2}{2}}\Gamma\paren{d+1+a_1}}{\Gamma\paren{1-a_1}\Gamma\paren{\frac{2-b}{2}}\Gamma\paren{\frac{\kappa+2+2a}{2}}}, \]
and this expression holds in a neighborhood of $(a_1,a_2)=(0,0)$, as desired.
Of course, $\bu^{d,+}_{d-1} T^d(w_4,\mu^{w_5})/C(a_1,a_2)$ may be defined in terms of the ratios \eqref{eq:uddm1w4FErat} (including the terms with $m < \kappa$), and in this way our expression for $\bu^{d,+}_{d-1} T^d(w_4,\mu^{w_5})$ is holomorphic in a neighborhood of $(a_1,a_2)=(0,0)$.
\end{proof}

We must clarify precisely when the Whittaker function is identically zero, and we begin with the minimal $K$-types, as described in proposition \ref{prop:GDPower}.
The following proposition follows from theorem \ref{thm:MinWhitt} and the results of section \ref{sect:WhittCentralEntry}:
\begin{prop}
\label{prop:MinimalWhittsZero}
	Suppose $f$ is power-function minimal for $\mu$ at weight $d$.
	\begin{enumerate}
		\item If $d=0$, then $f W^d(\cdot,\mu,\psi_{1,1})$ is identically zero iff $f=0$.
		\item If $d=1$ and all $\mu_i$ are distinct, then $f W^d(\cdot,\mu,\psi_{1,1})$ is identically zero iff $f=0$.
		\item If $\mu=\paren{\frac{d-1}{2}+i t, -2i t, -\frac{d-1}{2}+i t}$ with $d\ge 1$, and either $d$ even or $t \ne 0$, then $f$ is a linear combination of the vectors $\bv^d_{\pm d} T^d(w_3,\mu^{w_3})$, and:\\
		$f W^d(\cdot,\mu,\psi_{1,1})$ is identically zero iff $f$ is a multiple of $\bv^d_d T^d(w_3,\mu^{w_3})$.
		\item If $\mu=\paren{\frac{d-1}{2}, 0, -\frac{d-1}{2}}$ with $d$ odd, then $f$ is a linear combination of the vectors $g_4^{d,+}$,$g_4^{d,-}$ and $(g_4^{d,+}+ g_4^{d,-})\WigDMat{d}(\vpmpm{--} w_l)$, and:\\
		$f W^d(\cdot,\mu,\psi_{1,1})$ is identically zero iff $f$ is a linear combination of $g_4^{d,+}+g_4^{d,-}$ and $(g_4^{d,+}+g_4^{d,-})\WigDMat{d}(\vpmpm{--} w_l)$.
		\item If $\mu=\paren{d-1, 0, 1-d}$ and $d \ge 1$, then $f$ a linear combination of the four vectors $\bv^d_{\pm d}$ and $\bv^d_{\pm d}\WigDMat{d}(\vpmpm{--} w_l)$, and:\\
		$f W^d(\cdot,\mu,\psi_{1,1})$ is identically zero iff $f$ is a linear combination of $\bv^d_d$ and $\bv^d_d\WigDMat{d}(\vpmpm{--} w_l)$.
	\end{enumerate}
\end{prop}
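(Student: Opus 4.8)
The plan is to turn each assertion into a statement about a single \emph{row} of a Whittaker matrix whose vanishing is already known. The device is the functional equation \eqref{eq:WhittFEs} combined with proposition \ref{prop:GDWhittFEs}: because $w_3$ is an involution and $w_4,w_5$ are mutually inverse, right-multiplication of a vector of the shape $\bv^d_m T^d(w,\mu^w)$ by $W^d(\cdot,\mu,\psi_{1,1})$ collapses it to $\bv^d_m W^d(\cdot,\mu^w,\psi_{1,1})=W^d_m(\cdot,\mu^w,\psi_{1,1})$, the $m$-th row at the permuted parameter; explicitly $T^d(w_3,\mu^{w_3})W^d(\cdot,\mu)=W^d(\cdot,\mu^{w_3})$ and $T^d(w_4,\mu^{w_5})W^d(\cdot,\mu)=W^d(\cdot,\mu^{w_5})$ since $w_3^2=w_5w_4=I$. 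By lemma \ref{lem:WhittEntryRaiseLower} such a row is identically zero exactly when it is literally the zero row, so everything reduces to knowing which rows of $W^d(\cdot,\nu)$ vanish for the relevant permutations $\nu$ of $\mu$. The one input we need is from theorem \ref{thm:MinWhitt} and section \ref{sect:WhittCentralEntry}: whenever the first two coordinates of $\nu$ differ by $d-1$ one has $W^d_d(\cdot,\nu)\equiv0$ and $W^d_{-d}(\cdot,\nu)\not\equiv0$.

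Parts 1 and 2 are immediate. For $d=0$ the function $W^0$ is scalar and nonzero (its completion $W^{0*}$ in theorem \ref{thm:MinWhitt} is a nonvanishing Mellin--Barnes integral), so $fW^0\equiv0$ iff $f=0$. For $d=1$, theorem \ref{thm:MinWhitt} exhibits the basis $\bu^{1,-}_0,\bu^{1,-}_1,\bu^{1,+}_1$ of $\C^3$ as carrying $W^1(\cdot,\mu,\psi_{1,1})$ to the completed functions $W^{1*}$ at the three cyclic orderings $\mu,\mu^{w_4},\mu^{w_5}$; for distinct $\mu_i$ these orderings are distinct and their completed Whittaker functions are linearly independent by their distinct leading exponents, so the map is injective and $fW^1\equiv0$ iff $f=0$ (alternatively lemma \ref{eq:WhittLinIdepn} applies whenever the differences avoid $\Z$). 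For part 3, proposition \ref{prop:GDWhittFEs}(1) together with proposition \ref{prop:GDPower} identifies the power-minimal space with $\Span\set{\bv^d_d T^d(w_3,\mu^{w_3}),\,\bv^d_{-d}T^d(w_3,\mu^{w_3})}$, and $\mu^{w_3}=\paren{\tfrac{d-1}{2}+it,-\tfrac{d-1}{2}+it,-2it}$ is exactly the hypothesis form of theorem \ref{thm:MinWhitt}. Hence $\bv^d_d T^d(w_3,\mu^{w_3})W^d(\cdot,\mu)=W^d_d(\cdot,\mu^{w_3})\equiv0$ while $\bv^d_{-d}T^d(w_3,\mu^{w_3})W^d(\cdot,\mu)=W^d_{-d}(\cdot,\mu^{w_3})\not\equiv0$, so a linear combination is annihilated iff its $\bv^d_{-d}$-coefficient vanishes, which is the claim.

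Parts 4 and 5 sit at self-dual parameters $-\mu^{w_l}=\mu$, which is the source of the extra pair of generators: by \eqref{eq:YamuDuality} the involution $h\mapsto h\WigDMat{d}(\vpmpm{--}w_l)$ preserves the power-minimal space, and by \eqref{eq:WhittDuality} (using $-\mu^{w_l}=\mu$ and that $g\mapsto\vpmpm{--}g^\iota w_l$ is a bijection of $G$) it also preserves the annihilator $\set{h: hW^d(\cdot,\mu,\psi_{1,1})\equiv0}$. Part 5 is then clean: $\mu=(d-1,0,1-d)$ already has $\mu_1-\mu_2=d-1$, so section \ref{sect:WhittCentralEntry} gives $\bv^d_d W^d(\cdot,\mu)=W^d_d\equiv0$ and $\bv^d_{-d}W^d(\cdot,\mu)=W^d_{-d}\not\equiv0$, and the duality involution transports these to $\bv^d_{\pm d}\WigDMat{d}(\vpmpm{--}w_l)$, giving precisely the stated two-dimensional annihilator inside the four-dimensional power-minimal space.

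The hard case is part 4, $\mu=\paren{\tfrac{d-1}{2},0,-\tfrac{d-1}{2}}$ with $d$ odd, where $\mu_1-\mu_3=d-1$ and $\mu_1-\mu_2=\mu_2-\mu_3=\tfrac{d-1}{2}\in\Z$, so $T^d(w_4,\mu^{w_5})$ and $T^d(w_l,\mu^{w_3})$ meet confluent singularities and must be read through analytic continuation, exactly as in the worked example closing the proof of proposition \ref{prop:GDWhittFEs}. Using proposition \ref{prop:GDWhittFEs}(2),(3) according to $d \bmod 4$, one of $g_4^{d,+},g_4^{d,-}$ transfers through $w_3$ to $\mu^{w_3}=\paren{\tfrac{d-1}{2},-\tfrac{d-1}{2},0}$, whose first two coordinates differ by $d-1$, so its image is $\pm\tfrac12 W^d_{-d}(\cdot,\mu^{w_3})\not\equiv0$ and that generator alone is not killed. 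The crux, and the step I expect to be the main obstacle, is the exact cancellation $(g_4^{d,+}+g_4^{d,-})W^d(\cdot,\mu)\equiv0$: this depends on the honest (not merely up-to-scalar) constants of proposition \ref{prop:GDWhittFEs}, and I would obtain it by comparing $\bu^{d,\pm}_dW^d(\cdot,\mu^{w_3})$ with the image at $\mu^{w_5}=(\mu^{w_3})^{w_l}$ via the $w_l$ functional equation evaluated at its removable-singularity value. Granting the cancellation, the duality involution supplies $(g_4^{d,+}+g_4^{d,-})\WigDMat{d}(\vpmpm{--}w_l)$ as the second annihilating vector, and the non-vanishing of $g_4^{d,+}$ forces the annihilator to be exactly two-dimensional. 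Everywhere outside these confluent integral points the argument is a direct transfer to a row settled by theorem \ref{thm:MinWhitt} and section \ref{sect:WhittCentralEntry}; it is only the bookkeeping of the intertwining operators at the coincident integer differences that requires care.
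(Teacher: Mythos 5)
Your reduction of parts 1--3 and the ``if'' half of part 5 to known rows of $W^d(\cdot,\mu^w,\psi_{1,1})$ via proposition \ref{prop:GDWhittFEs}, the functional equations and duality is essentially the paper's argument (for part 2 the paper uses the distinct $Y^0$-eigenvalues \eqref{eq:d1Y0Eigen} rather than leading exponents, but either works once theorem \ref{thm:MinWhitt} guarantees the three functions are nonzero). The genuine gap is at the crux of part 4: you write ``Granting the cancellation $(g_4^{d,+}+g_4^{d,-})W^d(\cdot,\mu)\equiv 0$\ldots'' and propose to obtain it by tracking the honest constants of the intertwining operators through their confluent singularities. That step is left unexecuted, and the route you sketch is both delicate and unnecessary, because there is no cancellation between nonzero terms to establish. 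Since $\bu^{d,+}_j+\bu^{d,-}_j=\bv^d_j$, the vector $g_4^{d,+}+g_4^{d,-}$ is supported on the rows $\bv^d_m$ with $m\ge\kappa=1+\mu_1-\mu_2$ and $m\equiv\kappa\pmod{2}$, and each such row $W^d_m(\cdot,\mu,\psi_{1,1})$ vanishes identically on its own: the factor $1/\Gamma\paren{\tfrac{1-m+\mu_1-\mu_2}{2}}=1/\Gamma\paren{\tfrac{\kappa-m}{2}}$ coming from \eqref{eq:classWhittDef} in the first step of the Jacquet integral is zero there. This is exactly the observation closing section \ref{sect:WhittCentralEntry}, and no constants from proposition \ref{prop:GDWhittFEs} enter.

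There is a second, smaller gap in the ``only if'' directions of parts 4 and 5. Knowing that $\bv^d_{-d}$ and $\bv^d_{-d}\WigDMat{d}(\vpmpm{--}w_l)$ are each individually not annihilated does not show that a combination $a_1\bv^d_{-d}+a_2\bv^d_{-d}\WigDMat{d}(\vpmpm{--}w_l)$ with $a_1a_2\ne 0$ is not annihilated, which is what you need to pin the annihilator down to exactly the stated two-dimensional space; the analogous issue arises for the non-annihilated directions in part 4. The paper resolves this for part 5 by noting that the two vectors are eigenvectors of the skew-symmetric $Y^0_\mu$ with real nonzero eigenvalues of opposite sign (equation \eqref{eq:BadWhittEigen} and duality), so applying $\sqrt{(d+1)(2d+3)}\,Y^0\pm(d-1)\sqrt{6d(2d-1)}$ isolates each summand, and for part 4 by the parity/character decomposition (the three basis vectors lie in rowspaces of $\Sigma^d_\chi$ for distinct $\chi$). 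You should supply one of these arguments rather than asserting the dimension of the annihilator from the behaviour of the individual generators.
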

Note that in case $d=1$, $g_4^\pm = \bu^{1,\pm}_1$, so there is no inconsistency between 4 and 5 in that case.
\begin{proof}[Proof of proposition \ref{prop:MinimalWhittsZero}]
To see case 2, we note that the eigenvalues under $Y^0$ are as follows:
\begin{align}
\label{eq:d1Y0Eigen}
	Y^0_\mu \bu^{1,-}_0 = -2\sqrt{\tfrac{3}{5}}\mu_3 \bu^{1,-}_0, \quad Y^0_\mu \bu^{1,-}_1 = -2\sqrt{\tfrac{3}{5}}\mu_2 \bu^{1,-}_1, \quad Y^0_\mu \bu^{1,+}_1 = -2\sqrt{\tfrac{3}{5}}\mu_1 \bu^{1,+}_1,
\end{align}
and we may see directly from theorem \ref{thm:MinWhitt} that each of the associated Whittaker functions is non-zero.

In case 3, we note that
\[ \bv^d_{-d} T^d(w_3,\mu^{w_3}) W^d(g,\mu,\psi_{1,1}) \ne 0, \qquad \bv^d_d T^d(w_3,\mu^{w_3}) W^d(g,\mu,\psi_{1,1}) = 0, \]
and by proposition \ref{prop:GDWhittFEs}, those two vectors are a basis of the required space.

In case 4, we note that
\begin{align*}
	\bv^d_{-d} T^d(w_3,\mu^{w_3}) W^d(\cdot,\mu,\psi_{1,1}) &\ne 0, \\
	(g_4^{d,+}+g_4^{d,-}) W^d(g,\mu,\psi_{1,1}) =& 0.
\end{align*}
The second equality follows because the vector $g_4^{d,+}+g_4^{d,-}$ is supported on $\bv^d_m$ with
\[ m\equiv\kappa\equiv 1+\mu_1-\mu_2 \pmod{2}, \qquad m \ge \kappa = 1+\mu_1+\mu_2, \]
and again the gamma function in \eqref{eq:classWhittDef} has a pole there (recall the discussion at the end of section \ref{sect:WhittCentralEntry}).
Then by duality, i.e. \eqref{eq:WhittDuality} and \eqref{eq:YamuDuality},
\[ (g_4^{d,+}+g_4^{d,-})\WigDMat{d}(\vpmpm{--} w_l) W^d(g,\mu,\psi_{1,1}) = (g_4^{d,+}+g_4^{d,-})W^d(\vpmpm{--}g^\iota w_l,\mu,\psi_{1,1}) = 0, \]
and $(g_4^{d,+}+g_4^{d,-})\WigDMat{d}(\vpmpm{--} w_l)$ is still power-function minimal.
Again, these are three linearly independent vectors (they have different parities, i.e. live in the rowspace of $\Sigma^d_\chi$ for different characters $\chi$) in a three dimensional space.

In case 5, we have
\[ \bv^d_{-d} W^d(\cdot,\mu,\psi_{1,1}) \ne 0, \qquad \bv^d_d W^d(\cdot,\mu,\psi_{1,1}) = 0, \]
and by duality
\[ \bv^d_{-d}\WigDMat{d}(\vpmpm{--} w_l) W^d(\cdot,\mu,\psi_{1,1}) \ne 0, \qquad \bv^d_d \WigDMat{d}(\vpmpm{--} w_l) W^d(\cdot,\mu,\psi_{1,1}) = 0. \]
It is easy to see that the vectors $\bv^d_{-d}$ and $\bv^d_{-d} \WigDMat{d}(\vpmpm{--}w_l)$ are linearly independent; however, we require something a bit stronger:
We need to know that any non-zero linear combination
\begin{align}
\label{eq:GDWhittCase5LinComb}
	(a_1 \bv^d_{-d}+a_2\bv^d_{-d} \WigDMat{d}(\vpmpm{--}w_l)) W^d(\cdot,\mu,\psi_{1,1})
\end{align}
with $a_1 a_2 \ne 0$ yields a function that is not identically zero.
Notice that
\begin{align}
\label{eq:BadWhittEigen}
	\sqrt{(d+1)(2d+3)} Y^0_\mu \bv^d_{-d}=-(d-1)\sqrt{6d(2d-1)}\bv^d_{-d},
\end{align}
and by duality $\bv^d_{-d} \WigDMat{d}(\vpmpm{--} w_l)$ is also an eigenfunction of $Y^0_\mu$, but its eigenvalue has the opposite sign, and this is sufficient for our purposes.
(Consider applying the operators $\sqrt{(d+1)(2d+3)} Y^0 \pm(d-1)\sqrt{6d(2d-1)}$ to \eqref{eq:GDWhittCase5LinComb}.)

\end{proof}

\begin{prop}
\label{prop:GDWhitt}
	If $f\in\C^{2d+1}$ is such that
	\begin{align}
		\label{eq:GDWhittLowering}
		f W^d(g,\mu,\psi_{1,1})\ne0, \qquad Y^{-1} f W^d(g,\mu,\psi_{1,1})=0, \qquad Y^{-2} f W^d(g,\mu,\psi_{1,1})=0,
	\end{align}
	subject to \eqref{eq:GDAssume}, then $f$ is power-function minimal.
\end{prop}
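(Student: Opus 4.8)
The plan is to transport Proposition~\ref{prop:GDPower} across the Jacquet integral, the hypothesis $f W^d(\cdot,\mu,\psi_{1,1})\ne 0$ serving precisely to discard the vectors that satisfy the lowering conditions only modulo the kernel of the Whittaker map. The starting point is the left-invariance identity
\begin{align*}
	Y^a\paren{v\,W^d(\cdot,\mu,\psi_{1,1})} = \paren{Y^a_\mu v}\,W^{d+a}(\cdot,\mu,\psi_{1,1}),
\end{align*}
already used to derive \eqref{eq:IntertwiningOpsY}: it holds because $Y^a$ commutes with the Jacquet integral and acts on $I^d(\cdot,\mu)$ through $Y^a_\mu$. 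Taking $a=-1,-2$ rewrites the two vanishing hypotheses as $(Y^{-1}_\mu f)\,W^{d-1}(\cdot,\mu,\psi_{1,1})=0$ and $(Y^{-2}_\mu f)\,W^{d-2}(\cdot,\mu,\psi_{1,1})=0$. It therefore suffices to prove that these force $Y^{-1}_\mu f=0$ and $Y^{-2}_\mu f=0$, for then $f$ is power-function minimal by definition; equivalently, that $Y^{-1}_\mu f$ and $Y^{-2}_\mu f$ cannot be nonzero elements of $\ker W^{d-1}$ and $\ker W^{d-2}$.

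First I would dispose of the generic parameters. If none of the differences $\mu_i-\mu_j$, $i\ne j$, lies in $\Z$, then Lemma~\ref{eq:WhittLinIdepn} makes $W^{d-1}$ and $W^{d-2}$ injective on coefficient vectors, so the two displayed identities give $Y^{-1}_\mu f=Y^{-2}_\mu f=0$ and we are finished. Under \eqref{eq:GDAssume} the family $\mu=(it_1,it_2,-i(t_1+t_2))$ has all three differences purely imaginary and nonzero, hence never integral, so it is always generic; in the family $\mu=(x+it,-2it,-x+it)$ some difference is integral only when $2x\in\Z$. Thus only the finitely many exceptional parameters with $2x\in\Z$ remain, and these are exactly the ones for which Propositions~\ref{prop:GDWhittFEs} and~\ref{prop:MinimalWhittsZero} were prepared (the cases $d\le 2$ being immediate from Proposition~\ref{prop:GDPower}).

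For the exceptional $\mu$ the Whittaker map has a nontrivial kernel and injectivity is lost, so I would instead determine the full solution set of the two lowering conditions and compare it against both $\ker W^d$ and the power-function-minimal vectors. By Lemma~\ref{lem:WhittEntryRaiseLower} a coefficient vector $v$ lies in $\ker W^{d'}$ exactly when one column of $v\,W^{d'}$ vanishes as a function of $y$, and the gamma-pole structure of \eqref{eq:classWhittDef}---as already exploited in section~\ref{sect:WhittCentralEntry} and in the proof of Proposition~\ref{prop:MinimalWhittsZero}---makes $\ker W^{d'}$ explicit in terms of the $\bu^{d',\pm}_m$ and their images under $T^{d'}(w_3,\mu^{w_3})$ and $\WigDMat{d'}(\vpmpm{--}w_l)$. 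Feeding these descriptions into the recursions \eqref{eq:FourTermReduction1} and \eqref{eq:FourTermReduction2} for $Y^{-1}_\mu$ and $Y^{-2}_\mu$, I would run the same case split on $\mu$ as in Cases II--IV of the proof of Proposition~\ref{prop:GDPower} and verify in each case that any $f$ with $(Y^{-1}_\mu f)\,W^{d-1}=0$ and $(Y^{-2}_\mu f)\,W^{d-2}=0$ either lies in $\ker W^d$ or is already power-function minimal. The hypothesis $f\,W^d\ne 0$ then selects the latter alternative. The $Y^0_\mu$-eigenvalue bookkeeping (e.g. \eqref{eq:d1Y0Eigen}, \eqref{eq:BadWhittEigen}) is the convenient device for separating the surviving vectors from the kernel vectors, whose duals under $\WigDMat{d'}(\vpmpm{--}w_l)$ carry the opposite eigenvalue.

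The main obstacle is exactly this non-generic step. Because $W^{d-1}$ and $W^{d-2}$ are no longer injective, the lowering conditions do not by themselves pin down $f$, and the hypothesis $f\,W^d\ne 0$ must be used in an essential way; the delicate part is that the gamma-pole cancellations creating the kernel must be tracked coherently across the three weights $d-2,d-1,d$ at once, including the doubly-singular parameter points flagged after Proposition~\ref{prop:GDWhittFEs}, where $T^d(w_4,\mu^{w_5})$ is read through its analytic continuation, so that the kernel vectors at weights $d-1$ and $d-2$ are matched correctly against the vectors produced by \eqref{eq:FourTermReduction1}--\eqref{eq:FourTermReduction2}. Once this matching is checked case by case, the contradiction with $f\,W^d\ne 0$ closes the argument.
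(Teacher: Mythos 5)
Your opening reduction is exactly the paper's: rewrite the hypotheses as $(Y^{-1}_\mu f)W^{d-1}=0$, $(Y^{-2}_\mu f)W^{d-2}=0$ and dispose of all $\mu$ with no integral difference $\mu_i-\mu_j$ via lemma \ref{eq:WhittLinIdepn}. The problem is that everything after that is a plan rather than a proof, and the plan omits the two mechanisms that actually make the exceptional case $\mu=(\kappa-1,0,1-\kappa)$ work. First, your description of $\ker W^{d'}$ is only half of what is needed: the gamma poles of \eqref{eq:classWhittDef} (equivalently, running the Going-Up argument on $\bv^\kappa_\kappa$) give the \emph{containment} $V^{d'}_{\kappa,\kappa,+}\subseteq\ker W^{d'}$ and its dual image under $\WigDMat{d'}(\vpmpm{--}w_l)$, but the argument equally requires the reverse non-vanishing statements --- that $vW^{d'}\ne 0$ for $v\in V^{d'}_\kappa$, $v\notin V^{d'}_{\kappa,\kappa,+}$ --- which the paper extracts from proposition \ref{prop:GDPower}, proposition \ref{prop:MinimalWhittsZero}, and the closure of $V^{d'}_{\kappa,\kappa,-}$ under the $Y^a_\mu$ operators. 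Without the upper bound on the kernel, "compare the solution set against $\ker W^d$" is not a determinate computation.

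Second, and more seriously, the "case-by-case verification" you defer to is not a rerun of Cases II--IV of proposition \ref{prop:GDPower}: there the right-hand sides of \eqref{eq:Downfdm1}--\eqref{eq:Downfdm2j0} are set to zero, whereas here they are only required to land in a prescribed subspace, so the linear algebra is genuinely different and the weight $d$ of $f$ can be arbitrarily far above $\kappa$ (your remark that the exceptional parameters are "exactly the ones for which Propositions \ref{prop:GDWhittFEs} and \ref{prop:MinimalWhittsZero} were prepared" conflates the two). The paper closes this by (i) the implications \eqref{eq:GDonVkappa} and \eqref{eq:GDonVkappapm}, re-read off from the proof of proposition \ref{prop:GDPower}, which say that descendants confined to $V^{d-1}_{\kappa,\kappa,\pm}$ and $V^{d-2}_{\kappa,\kappa,\pm}$ force $f$ itself into $V^{d}_{\kappa,\kappa,\pm}$; and (ii) the subtraction of the two projections $h\in V^d_{\kappa,\kappa,+}$ and $\wtilde{h}\,\WigDMat{d}(\vpmpm{--}w_l)$ (both annihilated by $W^d$), after which the remainder $v$ has zero projection onto the characters $\chi_{\varepsilon,\varepsilon}$ and $\chi_{-\varepsilon,\varepsilon}$ and proposition \ref{prop:GDPower} forces $v=0$, hence $fW^d=0$, contradicting the hypothesis. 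Neither ingredient appears in your sketch, and your closing paragraph correctly names this as "the main obstacle" without resolving it; as written the argument does not go through.
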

\begin{proof}
The proposition is trivial by lemma \ref{eq:WhittLinIdepn} when none of the differences $\mu_i-\mu_j, i\ne j$ are integers, and the case $\mu=\paren{\frac{\kappa-1}{2}+it,-2it,-\frac{\kappa-1}{2}+it}$ for some $\kappa \ge 1$ with $\kappa$ even or $t \ne 0$, is also relatively simple, after applying the $w_3$ functional equation of the Whittaker functions (as we may).
So suppose $\mu=(\kappa-1,0,1-\kappa)$ for some $\kappa \ge 1$.

For any $d$, define the spaces
\begin{align*}
	V^d_{\delta} =& \Span\set{\bv^{d}_j \setdiv j\equiv\delta\pmod{2}}, \\
	V^d_{\delta,\kappa} =& \Span\set{\bv^{d}_j \setdiv \abs{j} \ge \kappa, j\equiv\delta\pmod{2}}, \\
	V^d_{\delta,\kappa,\pm} =& \Span\set{\bv^{d}_j \setdiv \pm j \ge \kappa, j\equiv\delta\pmod{2}}.
\end{align*}
It follows immediately from the argument of proposition \ref{prop:GDPower} that for $f$ which is not already power-function minimal,
\begin{align}
\label{eq:GDonVkappa}
	f \in V^d_\kappa, \quad Y^{-1}_\mu f \in V^{d-1}_{\kappa,\kappa}, \quad Y^{-2}_\mu f \in V^{d-2}_{\kappa,\kappa} \; \Rightarrow \; f \in V^d_{\kappa,\kappa},
\end{align}
and noticing that the $Y^a_\mu$ operators act on $\bv^d_j$ precisely the same as on $\bu^{d,\pm}_j$ for $j \ge 3$, we also have
\begin{align}
\label{eq:GDonVkappapm}
	f \in V^d_\kappa, \quad Y^{-1}_\mu f \in V^{d-1}_{\kappa,\kappa,\pm}, \quad Y^{-2}_\mu f \in V^{d-2}_{\kappa,\kappa,\pm} \; \Rightarrow \; f \in V^d_{\kappa,\kappa,\pm}.
\end{align}

We know that $\bv^\kappa_\kappa W^\kappa(\cdot,\mu,\psi_{1,1})=0$, and the argument of section \ref{sect:GoingUp} applied to $\bv^\kappa_\kappa$ (in place of $\bu^{\kappa,\pm}_\kappa$) implies that $v W^d(\cdot,\mu,\psi_{1,1})=0$ for all $v\in V^d_{\kappa,\kappa,+}$ for all $d$.
Similarly, since $V^d_{\kappa,\kappa,-}$ is closed under the $Y^a_\mu$ operators (because $\mu_1-\mu_2+1-\kappa=0$ in equations \eqref{eq:ThreeTermRecursion}-\eqref{eq:FourTermReduction2}), and
\[ \bv^\kappa_{-\kappa} W^\kappa(\cdot,\mu,\psi_{1,1})\ne 0, \qquad \paren{g_4^{2\kappa-1,+}-g_4^{2\kappa-1,-}} W^{2\kappa-1}(\cdot,\mu,\psi_{1,1})\ne 0, \]
proposition \ref{prop:GDPower} implies $v W^d(\cdot,\mu,\psi_{1,1})\ne 0$ for all $v\in V^d_{\kappa,\kappa}$, $v\notin V^d_{\kappa,\kappa,+}$ for all $d$.
Even stronger, by \eqref{eq:GDonVkappa}, proposition \ref{prop:GDPower}, and proposition \ref{prop:MinimalWhittsZero} at $d=0,1$, we know $v W^d(\cdot,\mu,\psi_{1,1})\ne 0$ for all $v\in V^d_\kappa$, $v\notin V^d_{\kappa,\kappa,+}$ for all $d$.

Suppose
\begin{align*}
	Y^{-1} f W^d(g,\mu,\psi_{1,1})=0, \qquad Y^{-2} f W^d(g,\mu,\psi_{1,1})=0,
\end{align*}
and $f$ is not power-function minimal.
Set $\varepsilon=(-1)^\kappa$ and $\delta\in\set{0,1}$, $\delta\equiv\kappa\pmod{2}$.
Since the zero function cannot descend to a non-zero Whittaker function, $f$ must descend, via $Y^{-1}_\mu$ and $Y^{-2}_\mu$ to a non-zero linear combination of either
\[  g_4^{2\kappa-1,+}+g_4^{2\kappa-1,-} \text{ and } (g_4^{2\kappa-1,+}+g_4^{2\kappa-1,-})\WigDMat{d}(\vpmpm{--} w_l), \]
or $\bv^\kappa_\kappa$ and $\bv^\kappa_\kappa \WigDMat{d}(\vpmpm{--} w_l)$.

Set
\[ h = \sum_{\substack{j \ge \kappa\\j\equiv\kappa\summod{2}}} \frac{f_j+(-1)^{d-\kappa} f_{-j}}{2} \bv^d_j \in V^d_{\kappa,\kappa,+}, \]
then $h W^d(g,\mu,\psi_{1,1})$ is zero.
Subtracting $h$ removes the projection onto $\chi_{\varepsilon,\varepsilon}$, i.e. $(f-h)\Sigma^d_{\varepsilon,\varepsilon}=0$, and so by the above arguments, $f-h$ must descend to a multiple of either
\[ (g_4^{2\kappa-1,+}+g_4^{2\kappa-1,-})\WigDMat{d}(\vpmpm{--} w_l) \text{ or } \bv^\kappa_\kappa \WigDMat{d}(\vpmpm{--} w_l). \]

Now we switch to the dual Whittaker function (note $-\mu^{w_l}=\mu$):
Set $\wtilde{f}=(f-h)\WigDMat{d}(\vpmpm{--} w_l)$, then $\wtilde{f}$ must descend to a multiple of either $g_4^{2\kappa-1,+}+g_4^{2\kappa-1,-} \in V^{2\kappa-1}_{\kappa,\kappa,+}$ or $\bv^\kappa_\kappa \in V^\kappa_{\kappa,\kappa,+}$.
As before, we set 
\[ \wtilde{h} = \sum_{\substack{j \ge \kappa\\j\equiv\kappa\summod{2}}} \frac{\wtilde{f}_j-(-1)^{d-\kappa} \wtilde{f}_{-j}}{2} \bv^d_j \in V^d_{\kappa,\kappa,+}. \]
Now consider the vector $v=f-h-\wtilde{h}\WigDMat{d}(\vpmpm{--}w_l)$ which has the projections $v\Sigma^d_{\varepsilon,\varepsilon}=v\Sigma^d_{-\varepsilon,\varepsilon}=0$.
The projection of $v$ onto $V^d_{\kappa}$ is not contained in $V^d_{\kappa,\kappa,+}$ unless it is zero, and the same is true for the projection of $v\WigDMat{d}(\vpmpm{--} w_l)$.
We still have
\begin{align*}
	Y^{-1} v W^d(g,\mu,\psi_{1,1})=0, \qquad Y^{-2} v W^d(g,\mu,\psi_{1,1})=0,
\end{align*}
and applying proposition \ref{prop:GDPower} to $v$, we conclude $v=0$ because any non-zero minimal descendant could only meet the conclusions 4-6, and we have constructed $v$ so this is impossible.
Therefore,
\[ f W^d(g,\mu,\psi_{1,1})=(h+\wtilde{h}\WigDMat{d}(\vpmpm{--} w_l))W^d(g,\mu,\psi_{1,1})=0. \]

\end{proof}

Using the bases for the spaces of power-function minimal vectors given in proposition \ref{prop:MinimalWhittsZero}, together with propositions \ref{prop:GDPower} and \ref{prop:GDWhittFEs}, and theorem \ref{thm:MinWhitt} gives:
\begin{cor}
\label{cor:WhittFinalFEs}
	Suppose $d \ge 2$, $\mu$ and $f\in\C^{2d+1}$ are such that \eqref{eq:GDWhittLowering} holds and further that $f W^d(g,\mu,\psi_{1,1})$ is an eigenfunction of $Y^0$ if $d=2$.
	Then $f W^d(g,\mu,\psi_{1,1}) = C f' W^d(g,\mu', \psi_{1,1})$, for some non-zero constant $C$, where
	\begin{enumerate}
		\item $\mu'=\paren{\frac{d-1}{2}+i t, -\frac{d-1}{2}+i t, -2i t}$ with $f'=\bu^{d,+}_d$, or
		\item $\mu'=\paren{d-1, 0, 1-d}$ with $f'$ some linear combination of $\bv^d_{-d}$ and $\bv^d_{-d} \WigDMat{d}(\vpmpm{--} w_l)$.
	\end{enumerate}
\end{cor}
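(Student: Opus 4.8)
The plan is to reduce the statement entirely to the power-function classification already in hand and then transport it through the $w_3$ functional equation of the Whittaker function. First I would apply proposition~\ref{prop:GDWhitt}: hypothesis \eqref{eq:GDWhittLowering} is exactly the assertion that $f$ is power-function minimal, so $(\mu,f)$ must be one of the six cases of proposition~\ref{prop:GDPower}. Since $fW^d(\cdot,\mu,\psi_{1,1})\ne0$ forces $f\ne0$, case~1 is excluded, and $d\ge2$ removes case~2; thus $f$ is a multiple of $g_1$ (case~3, only at $d=2$), lies in the $g_2$-span of case~4, the $g_2,g_4$-span of case~5, or the $g_3,\bu^{d,\pm}_d$-span of case~6.

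The engine for cases~4 and~5 is the identity, valid as meromorphic functions of $\mu$, that $v\,T^d(w_3,\mu^{w_3})\,W^d(g,\mu,\psi_{1,1})=v\,W^d(g,\mu^{w_3},\psi_{1,1})$ for any vector $v$; this is just \eqref{eq:WhittFEs} applied at $\mu^{w_3}$, using $(\mu^{w_3})^{w_3}=\mu$. In case~4, proposition~\ref{prop:GDWhittFEs}(1) writes the spanning vectors $g_2^{d,0,\varepsilon}$ and $g_2^{d,1,-\varepsilon}$ as constant multiples of $\bu^{d,\varepsilon}_d T^d(w_3,\mu^{w_3})$ and $\bu^{d,-\varepsilon}_d T^d(w_3,\mu^{w_3})$, so stripping the intertwiner turns $fW^d(\cdot,\mu,\psi_{1,1})$ into a combination of $\bu^{d,\pm}_d W^d(\cdot,\mu^{w_3},\psi_{1,1})$ with $\mu^{w_3}=(\tfrac{d-1}{2}+it,-\tfrac{d-1}{2}+it,-2it)$. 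I would then invoke $W^d_{d,m}=0$ from theorem~\ref{thm:MinWhitt}, i.e. $\bv^d_d W^d(\cdot,\mu^{w_3},\psi_{1,1})=0$, which collapses $\bu^{d,-}_d W^d=-\bu^{d,+}_d W^d$, leaving a multiple of $\bu^{d,+}_d W^d(\cdot,\mu^{w_3},\psi_{1,1})$: conclusion~1. Case~5 is identical after picking the representative $g_4^{d,+}$ (for $d\equiv1$) or $g_4^{d,-}$ (for $d\equiv3$) of the one-dimensional quotient of the minimal space by its zero-Whittaker subspace (proposition~\ref{prop:MinimalWhittsZero}(4)); proposition~\ref{prop:GDWhittFEs}(2,3) writes each as $\bu^{d,\pm}_d T^d(w_3,\mu^{w_3})$, and stripping yields conclusion~1 at $t=0$, since there $\mu^{w_3}=(\tfrac{d-1}{2},-\tfrac{d-1}{2},0)$.

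Case~6 needs no functional equation: $\mu=(d-1,0,1-d)$ is already the $\mu'$ of conclusion~2, and proposition~\ref{prop:MinimalWhittsZero}(5) presents the minimal space as the span of $\bv^d_{\pm d}$ and $\bv^d_{\pm d}\WigDMat{d}(\vpmpm{--} w_l)$ with zero-Whittaker subspace spanned by $\bv^d_d$ and $\bv^d_d\WigDMat{d}(\vpmpm{--} w_l)$. Hence, modulo the zero part, $fW^d(\cdot,\mu,\psi_{1,1})$ equals $(a\bv^d_{-d}+b\bv^d_{-d}\WigDMat{d}(\vpmpm{--} w_l))W^d(\cdot,\mu,\psi_{1,1})$, which is conclusion~2. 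In every case the constant $C$ is automatically nonzero, since $C=0$ would contradict $fW^d(\cdot,\mu,\psi_{1,1})\ne0$.

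The one genuinely separate point, and the step I expect to be the main obstacle, is disposing of the $g_1$ false positive in case~3, which is precisely why the $Y^0$-eigenfunction hypothesis is imposed at $d=2$. Here I would argue that if $(cg_1)W^2$ is a $Y^0$-eigenfunction with eigenvalue $\lambda$, then $(Y^0_\mu g_1-\lambda g_1)W^2=0$; applying $Y^{-2}$ and using $Y^{-2}_\mu g_1=0$ (power-function minimality) leaves $(Y^{-2}_\mu Y^0_\mu g_1)W^0=0$. By \eqref{eq:Ym2Y0g1} the vector $Y^{-2}_\mu Y^0_\mu g_1$ is a nonzero multiple of $\bu^{0,+}_0$ unless some $\mu_i-\mu_j=1$, and at $d=0$ proposition~\ref{prop:MinimalWhittsZero}(1) forbids a nonzero vector from having vanishing Whittaker function; so some $\mu_i-\mu_j=1$. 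Under \eqref{eq:GDAssume} this forces $\mu=(\tfrac12+it,-2it,-\tfrac12+it)$ or $\mu=(1,0,-1)$, the case~4 and case~6 values at $d=2$, where $cg_1$ (still power-function minimal) is handled by the arguments above. All the delicate bookkeeping at the degenerate intertwiners has already been absorbed into propositions~\ref{prop:GDWhittFEs} and~\ref{prop:MinimalWhittsZero}, so it only remains to quote them.
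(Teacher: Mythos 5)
Your proposal is correct and follows essentially the same route as the paper's (much terser) proof: reduce to power-function minimality via proposition \ref{prop:GDWhitt}, run through the cases of proposition \ref{prop:GDPower}, transport via proposition \ref{prop:GDWhittFEs} and the $w_3$ functional equation \eqref{eq:WhittFEs}, and quotient out the kernel vectors using proposition \ref{prop:MinimalWhittsZero} and the vanishing $\bv^d_d W^d(\cdot,\mu',\psi_{1,1})=0$ from theorem \ref{thm:MinWhitt}. Your explicit $Y^{-2}Y^0$ argument disposing of the $g_1$ false positive at $d=2$ is precisely the mechanism the paper gestures at via \eqref{eq:Ym2Y0g1} and the discussion following proposition \ref{prop:GDPower}.
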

This follows, for example, when $\mu=\paren{\frac{d-1}{2},0,-\frac{d-1}{2}}$ with $d \equiv 3 \pmod{4}$ by writing
\[ f=a_1 g_2^{d,0,-}+a_2 g_4^{d,+}+a_3 g_4^{d,-} = \paren{a_1 C_1 \bu^{d,-}_d+(a_2-a_3)C_2 \bu^{d,+}_d} T^d(w_3,\mu^{w_3})+a_3(g_4^{d,+}+g_4^{d,-}), \]
and using \eqref{eq:WhittFEs} and
\[ \bv^d_d W^d(g,\mu^{w_3},\psi_{1,1}) = (g_4^{d,+}+g_4^{d,-}) W^d(g,\mu,\psi_{1,1}) = 0. \]

In case $d=2$, the additional assumption about the behavior under $Y^0$ rules out the false positive posed by $g_1$, as in the discussion following proposition \ref{prop:GDPower}.
As mentioned in section \ref{sect:BadWhitt}, the second case will not occur as the Whittaker function of a cusp form.

\subsection{For cusp forms}
\label{sect:GDCuspForms}
We may now prove theorem \ref{thm:GDCusp}.
As mentioned in the discussion preceeding the theorem, we take condition 2 of proposition \ref{prop:EquivMinimalKTypes} as our working definition of minimal-weight forms.
Suppose $\phi$ has minimal weight $d$ and spectral parameters $\mu$.

The $n$-th Fourier coefficient of $\phi$ is of the form $f W^d(\cdot, \mu, \psi_n), f\in\C^{2d+1}$ by theorem \ref{thm:WhittMultOne} (and the discussion of section \ref{sect:KProjOps}), and proposition \ref{prop:GDWhitt} gives the allowed values of $d$, $f$ and $\mu$.
If $d=0$, there is nothing to do, and if $ d \ge 2$, we apply corollary \ref{cor:WhittFinalFEs} to arrive at the parameter set described in theorem \ref{thm:GDCusp}.
There is a minor caveat that this corollary is given in terms of the character $\psi_{1,1}$ and not $\psi_n$, but this is readily fixed:
If $n \in \Z^2, n_1 n_2 \ne 0$, we may use the isomorphism $n \in (\R^\times)^2 \cong V Y^+$ to write $n=v \tilde{n}$ with $v\in V$, and $\tilde{n}\in Y^+$, then using (I.3.7) and (I.3.9), we have
\begin{align*}
	W^d(g,\mu,\psi_n) =& p_{-\rho-\mu^{w_l}}(\tilde{n}) \WigDMat{d}(w_l v w_l) W^d(n g,\mu,\psi_{1,1}),
\end{align*}
and \eqref{eq:buDv} shows the extra matrix $\WigDMat{d}(w_l v w_l)$ at worst alters the sign of the coefficient.

A small bit more need be said about the cases $\mu=(d-1,0,1-d)$ and $d=1$.
When $\mu=(d-1,0,1-d)$, we have already pointed out in section \ref{sect:BadWhitt} that the asymptotics of $W^d_{-d}(g,\mu,\psi_n)$ are not compatible with the boundedness of cusp forms.
Another way to see these don't occur is to note that such a cusp form would have a real eigenvalue for the skew-symmetric operator $Y^0$ (recall \eqref{eq:BadWhittEigen} and the following discussion).

Now suppose $d=1$ and $\mu$ is arbitrary.
The eigenvalues under $Y^0$ are given in \eqref{eq:d1Y0Eigen}; if the components of $\mu$ are distinct, these choices of $f$ give three linearly independent Whittaker functions.
In case $\mu=(x+it,-2it,-x+it)$, $x\ne 0$, only $\bu^{1,-}_1$ may give the Whittaker function of a cusp form since the others are again eigenfunctions of a skew-symmetric operator with eigenvalues that are not purely imaginary.
Further, the functional equations of the Whittaker function yield
\begin{align*}
	\bu^{1,-}_1 T(w_3,\mu^{w_3}) =& C \bu^{1,-}_0, \\
	\bu^{1,+}_1 T(w_5,\mu^{w_4}) =& C \bu^{1,-}_0.
\end{align*}
So it suffices to take $\bu^{1,-}_0 W^1(y, \mu, \psi_{1,1})$ to be the $d=1$ Whittaker function with either $\mu=(it_1,it_2,-i(t_1+t_2))$ or $\mu=(x+it,-x+it,-2it)$.

\section{Going Up}
\label{sect:GoingUp}
Throughout this section, we fix a triple of spectral parameters $\mu$ and a character $\chi=\chi_{(-1)^\delta,\varepsilon}$ with its two parities $\delta\in\set{0,1}$ and $\varepsilon=\pm1$, as in \eqref{eq:chipmpmdef}.
For any $0 \le \kappa \in \Z$, let $\mathcal{V}^d_{\kappa,\chi} = \Span_{j \ge \kappa} \bu^{d,\varepsilon}_{2j+\delta} \subset \C^{2d+1}$.
As in the previous section, we may safely assume $-\wbar{\mu}$ is a permutation of $\mu$, i.e. that $\mu$ is a permutation of either
\[ (i t_1,i t_2,-i(t_1+t_2)) \qquad \text{or} \qquad (x+i t,-x+i t,-2i t), \qquad t_i,x,t\in\R,  t_1 > t_2 > -t_1 -t_2, x \ge 0. \]
We choose the trivial permutation.
In case $\mu_1-\mu_2+1 \in (2\Z+\delta)$, we define $\kappa=\mu_1-\mu_2+1$, and otherwise we set $\kappa=0$.
For convenience, we define $d_0=\kappa$ when $\kappa > 0$ and
\[ d_0=\piecewise{1 & \If \delta=1 \text{ or } \varepsilon = -1, \\ 0 & \Otherwise,} \]
when $\kappa=0$.
Lastly and for this section only, we apply the shorthand $\bu^d_j=\bu^{d,\varepsilon}_j$ and $\mathcal{V}^d = \mathcal{V}^d_{\kappa,\chi}$.

For each dimension/weight $d$, we have a ``minimal-weight vector''
\[ \bu^d_\text{min} = \bu^d_{j_\text{min}}, \qquad j_\text{min} = \piecewise{2 & \If \kappa=\delta=0, d\not\equiv d_0\pmod{2}, \\ 1 & \If \delta=1 \text{ and } \kappa=0, \\ \kappa & \Otherwise.} \]
We wish to show for $d \ge d_0$ that $\mathcal{V}^d$ can be generated by applying the $Y^a_\mu$ operators to $\bu^{d_0}_\text{min}$.
We accomplish this through induction by showing that suitable combinations of operators applied to $\bu^d_\text{min}$ will give $\bu^{d+1}_\text{min}$ or $\bu^{d+2}_\text{min}$; we call this the $d \to d+1$ or $d \to d+2$ step.
We can then fill out the remainder of $\mathcal{V}^{d+1}$ or $\mathcal{V}^{d+2}$ by repeatedly applying \eqref{eq:ThreeTermRecursion}.
Precisely, we show
\begin{prop}
\label{prop:GoingUpPowFun}
	\[ \C[Y^0_\mu, Y^1_\mu, Y^2_\mu] \bu^{d_0}_\text{min} = \bigcup_{d \ge d_0} \mathcal{V}^d, \]
	where $\C[Y^0_\mu, Y^1_\mu, Y^2_\mu]$ is the complex algebra generated by $Y^0_\mu, Y^1_\mu, Y^2_\mu$ under composition, and the $Y^a_\mu$ operators are viewed in the sense \eqref{eq:YmuDom}.
\end{prop}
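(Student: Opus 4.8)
The plan is to induct on the weight $d \ge d_0$, with the reduction that it suffices to produce the single minimal vector $\bu^d_{\text{min}}$ at each weight. The key observation making this reduction work is that $Y^0_\mu$ preserves each $\mathcal{V}^d$ (it shifts the weight by $0$) and, by the three-term recursion \eqref{eq:ThreeTermRecursion}, acts on the ordered basis $\bu^d_\kappa, \bu^d_{\kappa+2}, \ldots$ as a tridiagonal operator whose \emph{bottom} off-diagonal entry vanishes: at $j=\kappa$ the coefficient of $\bu^d_{\kappa-2}$ is proportional to $\mu_1-\mu_2+1-\kappa=0$, precisely because $\kappa=\mu_1-\mu_2+1$. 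Meanwhile the super-diagonal entries, proportional to $\sqrt{(d-j)(d-1-j)}\,(\mu_1-\mu_2+1+j)$, are nonzero for $\kappa\le j\le d-2$ (the quadratic factors are positive in range, and $\mu_1-\mu_2+1+j$ is nonzero under \eqref{eq:GDAssume}). Hence $(Y^0_\mu)^k \bu^d_{\text{min}}$ has a nonzero top component at index $\kappa+2k$, so these iterates are triangular with respect to the basis and span $\mathcal{V}^d$; i.e. $\bu^d_{\text{min}}$ is a cyclic vector and $\C[Y^0_\mu]\,\bu^d_{\text{min}} = \mathcal{V}^d$. This is exactly the ``filling out'' via \eqref{eq:ThreeTermRecursion}, and it establishes the base case $d=d_0$.

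For the inductive step I would produce $\bu^{d+1}_{\text{min}}$ (the $d\to d+1$ step) or $\bu^{d+2}_{\text{min}}$ (the $d\to d+2$ step) inside the algebra. Applying $Y^1_\mu$ to $\bu^d_{\text{min}}$ and using \eqref{eq:FourTermInduction1}, the same vanishing of the $\bu_{\kappa-2}$-coefficient forces the image to be supported on just the two lowest basis vectors of $\mathcal{V}^{d+1}$, namely $\bu^{d+1}_\kappa$ and $\bu^{d+1}_{\kappa+2}$ (and analogously $Y^2_\mu$ via \eqref{eq:FourTermInduction2} lands in the two lowest vectors of $\mathcal{V}^{d+2}$). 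Since a single operator yields a \emph{mixture} of these two, the minimal vector must be isolated by forming a second vector in the same two-dimensional span, for instance $Y^2_\mu \bu^{d-1}_{\text{min}}$ (available once $d\ge d_0+1$), and subtracting. I would check that the two coefficient ratios differ: they agree up to nonzero algebraic factors except for the middle factors $3\mu_3-d-1$ (from $Y^1_\mu$) versus $3\mu_3-2d-1$ (from $Y^2_\mu$), which are distinct and, crucially, nonzero under \eqref{eq:GDAssume} because there $\mu_3$ is purely imaginary while these shifts are by nonzero reals. This pins $\bu^{d+1}_{\text{min}}$ down up to a nonzero scalar, after which the first paragraph fills out all of $\mathcal{V}^{d+1}$.

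The remaining work is the low-index bookkeeping, which I expect to be the real labor rather than any conceptual difficulty. Two features complicate the lowest indices: the identities $\bu^{d,\varepsilon}_{-j}=\pm(-1)^d\bu^{d,\varepsilon}_j$ from \eqref{eq:udef} cause the would-be $\bu_{-2}$ (or $\bu_{-1}$) terms in \eqref{eq:FourTermInduction1}--\eqref{eq:FourTermInduction2} to fold back onto $\bu_2$ (or $\bu_1$), and $\bu^{d,\varepsilon}_0$ vanishes unless $\varepsilon=(-1)^d$. In the borderline case $\kappa=\delta=0$ this makes $j_{\text{min}}$ alternate between $0$ and $2$ with the parity of $d$, so that some steps must \emph{raise} the minimal index from $0$ to $2$ or from $2$ to $0$. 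Here the going-up step is not uniform: for instance, when $\bu^d_0$ is alive one computes from \eqref{eq:FourTermInduction1} that $Y^1_\mu\bu^d_0$ is a pure nonzero multiple of $\bu^{d+1}_2$ (the middle coefficient carries a factor $j=0$, and the folded $\bu_{-2}$/$\bu_2$ contribution survives), which directly delivers the minimal vector; whereas the reverse index-$2\to0$ transition produces a genuine three-term mixture and needs a third image to isolate the bottom. This is also where the author's ``or'' between the $d\to d+1$ and $d\to d+2$ steps earns its keep, and where the very first transition $d_0\to d_0+1$ (lacking $\bu^{d_0-1}_{\text{min}}$) must be treated by hand.

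Thus the main obstacle is not a single hard estimate but the exhaustive verification, over the cases $\kappa=0$ versus $\kappa>0$, the parities $\delta$ and $\varepsilon$, and the parity of $d$, that in each configuration a suitable polynomial in $Y^0_\mu,Y^1_\mu,Y^2_\mu$ applied to the previously constructed minimal vectors yields a cyclic vector for $Y^0_\mu$ in the next weight space. Every coefficient needed for this isolation is a product of the square-root index factors (nonzero in range), the linear factors $\mu_1-\mu_2+1\pm j$ (nonzero for the relevant $j$ under \eqref{eq:GDAssume}), and the middle factors $3\mu_3-(\text{integer})$ (nonzero and mutually distinct because $3\mu_3$ is purely imaginary), so the bookkeeping closes, but it must be carried out case by case.
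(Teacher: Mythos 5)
Your overall architecture coincides with the paper's: a double induction in which one first manufactures the single vector $\bu^d_\text{min}$ at each weight $d \ge d_0$ and then fills out $\mathcal{V}^d = \C[Y^0_\mu]\bu^d_\text{min}$ via the tridiagonal recursion \eqref{eq:ThreeTermRecursion}, using the same non-vanishing inputs (the factors $\mu_1-\mu_2+1+j$, the purely imaginary $3\mu_3$, the folding $\bu^{d,\varepsilon}_{-j}=\varepsilon(-1)^d\bu^{d,\varepsilon}_j$ at the bottom indices). Where you genuinely diverge is the raising step. The paper builds the second-order operators $R^{d,1}_{\mu,j}$, $R^{d,2}_{\mu,j}$ of \eqref{eq:RPlus1}--\eqref{eq:RPlus2} — combinations of $Y^0_\mu Y^a_\mu$, $Y^a_\mu$ and $Y^a_\mu Y^0_\mu$ tuned so that by \eqref{eq:Plus1}--\eqref{eq:Plus2} the $\bu^{d+a}_{j+2}$ component is annihilated outright, after which one of the four listed degeneracies disposes of the $\bu^{d+a}_{j-2}$ term; a single application to a single vector then produces $\bu^{d+a}_\text{min}$. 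You instead take the two first-order images $Y^1_\mu\bu^{d}_\text{min}$ and $Y^2_\mu\bu^{d-1}_\text{min}$, both landing in $\Span\set{\bu^{d+1}_\kappa,\bu^{d+1}_{\kappa+2}}$, and solve a $2\times2$ system. This is viable, but the paper's choice buys something your scheme must redo separately: the same operators and their adjoints \eqref{eq:AdjPlus1}--\eqref{eq:AdjPlus2} drive the orthonormality induction of proposition \ref{prop:GoingUpInnerProd}, which is proved jointly.

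Two spots in your write-up are not yet proofs. First, "the middle factors $3\mu_3-d-1$ and $3\mu_3-2d-1$ are distinct and nonzero" is not the right invariant for linear independence: the two coefficient ratios also differ in the index factors ($-\kappa$ versus $d-\kappa$, and $\sqrt{d-\kappa}$ versus $\sqrt{d+\kappa}$), and the actual $2\times2$ determinant works out to be proportional to $3\mu_3-(2d+1-\kappa)$, nonzero because $\mu_3$ is purely imaginary under the Going Up normalization (not under \eqref{eq:GDAssume}, whose second case has $\mu_3=-x+it$) and $2d+1-\kappa\ge d+1>0$; that computation must actually be carried out. Second, the deferred items are most of the content. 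The first transition $d_0\to d_0+1$ for $\kappa>0$ is in fact immediate — there $j_\text{min}=\kappa=d_0=d$, so in \eqref{eq:FourTermInduction1} both outer coefficients vanish (one carries $\sqrt{d-j}$, the other $\mu_1-\mu_2+1-\kappa$) and $Y^1_\mu\bu^{d_0}_{d_0}$ is already a nonzero multiple of $\bu^{d_0+1}_{d_0}$ — but the $\kappa=0$ configurations (the $0\to2$ and $2\to3$ steps forced by $\dim\mathcal{V}^1=0$ when $\varepsilon=+1$, the subsequent $d\to d+2$ chain, and the $j_\text{min}=1$ folding split by the parity of $d$) are exactly the paper's table of cases Ia--IIIb; your proposal correctly predicts their shape but does not establish them, and that verification is the bulk of the proof.
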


All of the raising operators described in the induction argument below follow from either \eqref{eq:FourTermInduction1} and \eqref{eq:FourTermInduction2} directly or from the following two linear combinations:
Set
\begin{align}
\label{eq:RPlus1}
	R^{d,1}_{\mu,j} =&\sqrt{d(d+2)^2(2d+1)^2(2d+5)} Y^0_\mu Y^1_\mu\\
	& \qquad +2\sqrt{6d(d+1)(d+2)(2d+1)}(d-1-2j-2\mu_3) Y^1_\mu \nonumber \\
	& \qquad -\sqrt{d^2(d+2)(2d-1)(2d+1)(2d+3)} Y^1_\mu Y^0_\mu \nonumber,
\end{align}
\begin{align}
\label{eq:RPlus2}
	R^{d,2}_{\mu,j} = &\sqrt{(d+1)(d+2)^2(d+3)(2d+1)(2d+7)} Y^0_\mu Y^2_\mu\\
	& \qquad +2\sqrt{6(d+1)(d+2)(2d+1)(2d+3)}(2d+1-2j-2\mu_3) Y^2_\mu \nonumber \\
	& \qquad -\sqrt{d(d+1)^2(d+2)(2d-1)(2d+1)} Y^2_\mu Y^0_\mu, \nonumber
\end{align}
then
\begin{align}
\label{eq:Plus1}
	R^{d,1}_{\mu,j} \bu^d_j &= 8j\sqrt{3(d+1-j)(d+2-j)(d+3-j)(d+j)}(\mu_1-\mu_2+1-j) \bu^{d+1}_{j-2} \\
	& \qquad -8j\sqrt{3(d+1-j)(d+1+j)}\bigl((d-j-2)(j+1+3\mu_3) \nonumber \\
	& \qquad \qquad -2(\mu_1-\mu_3+1)(\mu_2-\mu_3+1)+4(j+1)\bigr) \bu^{d+1}_{j}, \nonumber
\end{align}
\begin{align}
\label{eq:Plus2}
	R^{d,2}_{\mu,j} \bu^d_j &= -4j\sqrt{6(d+1-j)(d+2-j)(d+3-j)(d+4-j)}(\mu_1-\mu_2+1-j) \bu^{d+2}_{j-2} \\
	& \qquad -4\sqrt{6(d+1-j)(d+2-j)(d+1+j)(d+2+j)}\bigl((2d-j)(d+2-3\mu_3) \nonumber \\
	& \qquad \qquad +2(\mu_1-\mu_3+1)(\mu_2-\mu_3+1)-j(d+1-j)\bigr) \bu^{d+2}_{j}. \nonumber
\end{align}
In both operators, it is not too hard to see that the constant multiplying $\bu^{d+a}_j$ is non-zero, but we will show a stronger statement about the norms of the new vectors.
These become true raising operators because in the cases we use them one of the following is true:
\begin{enumerate}
\item $j=0$,
\item $\bu^{d+a}_{j-2}=0$,
\item $\mu_1-\mu_2+1-j=0$,
\item $j=1$ (so $\bu^{d+a}_{j-2} = \pm \bu^{d+a}_{j}$).
\end{enumerate}
Note that this is where the argument would fail if we attempted to use the highest-weight vector.

\begin{prop}
\label{prop:GoingUpInnerProd}
Suppose we have a sequence of sesquilinear forms $\innerprod{\cdot,\cdot}$ on $\mathcal{V}^d \times \mathcal{V}^d_{0,\chi}$, $d \ge d_0$ that satisfy:
\begin{align}
\label{eq:FormDef}
	& \innerprod{su,tv}=s\wbar{t}\innerprod{u,v}, s,t\in\C, \qquad \innerprod{u,v}=\innerprod{u, \proj_{\mathcal{V}^d} v}, \qquad \innerprod{Y^a_\mu u, v} = \innerprod{u, \what{Y}^a_{-\wbar{\mu}} v},
\end{align}
and that, for $d=d_0$, we have
\begin{align}
\label{eq:FormONB}
	\innerprod{\bu^{d}_{2i+\delta}, \bu^{d}_{2i'+\delta}} &= \bu^{d}_{2i+\delta} \trans{(\bu^{d}_{2i'+\delta})}, \qquad 2i+\delta,2i'+\delta \ge \kappa.
\end{align}
Then \eqref{eq:FormONB} continues to hold for all $d \ge d_0$.
\end{prop}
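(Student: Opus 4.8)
The plan is to prove the identity by a two-level induction, organized around the observation that the standard Euclidean pairing $\langle u,v\rangle_0 := u\trans{\wbar{v}}$ is itself one of the admissible forms. First I would verify that $\langle\cdot,\cdot\rangle_0$ satisfies all three conditions of \eqref{eq:FormDef}: sesquilinearity is immediate, the projection identity holds because the vectors $\bu^{d,\varepsilon}_{2i+\delta}$ with distinct indices are orthogonal (they are supported on disjoint pairs $\bv^d_{\pm j}$), and the adjoint identity $\langle Y^a_\mu u,v\rangle_0 = \langle u,\what{Y}^a_{-\wbar{\mu}} v\rangle_0$ is the finite-dimensional shadow of \eqref{eq:AdjY}/\eqref{eq:AdjYmuDef}. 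This last point is the only computational input: reading off the matrix of $Y^a_\mu$ in the basis $\bu^{d,\varepsilon}_j$ from \eqref{eq:GeneralFourTermYda}, one checks entrywise that $Y^a_\mu$ and $-(-1)^a Y^{-a}_{-\wbar{\mu}}$ are conjugate-transpose up to the scalar $\sqrt{(2d+2a+1)/(2d+1)}$; for $a=0$ this reduces to the clean cancellation $\sqrt{6}(\mu_3-1)-\CGB{d}{0}=\sqrt{6}\mu_3$ afforded by $\CGB{d}{0}=-\sqrt{6}$, and the off-diagonal matches follow from the Clebsch--Gordan symmetry \eqref{eq:CGsymm}. Granting this, the difference $E^d := \langle\cdot,\cdot\rangle - \langle\cdot,\cdot\rangle_0$ again satisfies \eqref{eq:FormDef}, and the proposition becomes the assertion that $E^d\equiv 0$ for all $d\ge d_0$, given $E^{d_0}=0$.

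Next I would run a strong induction on $d$, whose inductive step (passing to weight $d+1$, or weight $d+2$ when that is the step used in the proof of Proposition \ref{prop:GoingUpPowFun}) contains a secondary induction on the index $j$. For the base of the secondary induction I use that $\bu^{d+1}_\text{min}$ is obtained from $\bu^{d}_\text{min}$ by a single raising operator, namely $R^{d,1}_{\mu,j}$ or $R^{d,2}_{\mu,j}$ of \eqref{eq:Plus1}--\eqref{eq:Plus2} at $j=j_\text{min}$ (in one of the four degenerate configurations, e.g. $j_\text{min}=0$, or $j_\text{min}=\kappa$ with $\mu_1-\mu_2+1-j_\text{min}=0$, or $j_\text{min}=1$, that make these genuine one-term raisings). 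Since each $R^{d,\bullet}$ is a polynomial in $Y^0_\mu,Y^{1}_\mu,Y^{2}_\mu$, repeated use of the adjoint identity for $E$ moves the whole operator into the right slot as the corresponding polynomial in $\what{Y}^{0}_{-\wbar{\mu}},\what{Y}^{1}_{-\wbar{\mu}},\what{Y}^{2}_{-\wbar{\mu}}$, lowering the left-slot weight back to $d$; hence $E^{d+1}(\bu^{d+1}_\text{min},v)=E^{d}(\bu^{d}_\text{min},\,\cdot\,)=0$ by the primary hypothesis, for every $v\in\mathcal{V}^{d+1}_{0,\chi}$. Here the projection axiom of \eqref{eq:FormDef} is what lets me ignore the components of the lowered vector with index below $\kappa$, so that only the values \eqref{eq:FormONB} already known at weight $d$ are invoked.

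For the secondary inductive step I solve the three-term recurrence \eqref{eq:ThreeTermRecursion} at weight $d+1$ for its top coefficient, writing $\bu^{d+1}_{j+2}$ as a linear combination of $\bu^{d+1}_{j-2}$, $\bu^{d+1}_{j}$ and $Y^{0}_\mu\bu^{d+1}_{j}$; the coefficient of $\bu^{d+1}_{j+2}$ is nonzero throughout the range $j_\text{min}\le j\le (d+1)-2$ (the radical factor vanishes only at $j=d,d+1$, and $\mu_1-\mu_2+1+j\ne 0$ in the chosen permutation), so this is legitimate. Applying $E^{d+1}(\,\cdot\,,v)$, the first two terms vanish by the secondary hypothesis on the index, and in the third I move $Y^{0}_\mu$ (which preserves the weight $d+1$) into the right slot, reducing it to $E^{d+1}(\bu^{d+1}_{j},\what{Y}^{0}_{-\wbar{\mu}}v)$, which again vanishes by the secondary hypothesis. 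Thus $E^{d+1}(\bu^{d+1}_{m'},v)=0$ for every left basis vector and every $v$, i.e. $E^{d+1}\equiv 0$; the $+2$ steps run identically at weight $d+2$, closing both inductions.

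The main obstacle is the bookkeeping rather than any single hard estimate: one must confirm that the entrywise adjoint identity for $\langle\cdot,\cdot\rangle_0$ really holds under $\mu\mapsto-\wbar{\mu}$ together with transposition (this is where the $\rho$-normalization and the Clebsch--Gordan symmetries are essential), and one must track the branching recorded in the definitions of $d_0$ and $j_\text{min}$ --- in particular the split between $\kappa>0$ and $\kappa=0$, the parity conditions tying $\delta$ and $\varepsilon$ to $d$, and the verification that for each step the raising operator really lands on $\bu^{d+1}_\text{min}$ (resp. $\bu^{d+2}_\text{min}$) through one of the four listed degeneracies rather than a genuine three-term spread. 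Once these cases are aligned with the construction behind Proposition \ref{prop:GoingUpPowFun}, the abstract adjoint manipulations above carry the argument with no further computation.
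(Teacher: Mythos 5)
Your proof is correct in outline but takes a genuinely different route from the paper's. The paper never isolates your key observation---that the Euclidean pairing $\langle u,v\rangle_0=u\,\trans{\wbar{v}}$ itself satisfies all three axioms of \eqref{eq:FormDef}---although it uses it implicitly: the remark after \eqref{eq:AdjPlus2} that ``the coefficient on $\bu^d_j$ matches those on $\bu^{d+a}_j$'' is precisely the diagonal part of that statement. Instead, the paper runs a double induction directly on the values $\innerprod{\bu^{d}_{j},\bu^{d}_{j'}}$: the first step propagates the diagonal norm of $\bu^d_\text{min}$ upward via the tailored identities \eqref{eq:ONBPlus1}--\eqref{eq:ONBPlus2}, and the second step pins down general $\innerprod{\bu^{d+1}_j,\bu^{d+1}_{j'}}$ by evaluating $\innerprod{Y^1_\mu\bu^d_{j-2},\bu^{d+1}_{j'}}$ in two ways (directly via \eqref{eq:FourTermInduction1}, and via the adjoint axiom together with \eqref{eq:FourTermReduction1}). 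Your linearization---subtract $\langle\cdot,\cdot\rangle_0$ and show the difference $E$ vanishes---buys a cleaner secondary induction: once $E^{d+1}(\bu^{d+1}_{j'},\cdot)=0$ for $j'\le j$, the three-term recursion \eqref{eq:ThreeTermRecursion} plus the adjoint of $Y^0_\mu$ gives it at $j+2$ without descending to weight $d$ again. The price is that you must actually prove the adjoint identity for $\langle\cdot,\cdot\rangle_0$; this is true, and is most easily read off by comparing coefficients in \eqref{eq:FourTermInduction1}--\eqref{eq:FourTermReduction2} under $d\mapsto d+a$, $\mu\mapsto-\wbar{\mu}$, but note that the symmetry governing the off-diagonal entries of $Y^0_\mu$ is $\CG{d,2,0}{m,2}=\CG{d,2,0}{m+2,-2}$ (an exchange-type $3j$ symmetry), not \eqref{eq:CGsymm}, and that at the edge indices $j\in\set{0,1}$ the identifications $\bu^{d}_{-j}=\pm(-1)^d\bu^d_j$ and the factor $\|\bu^d_0\|^2=2\|\bu^d_j\|^2$ must cancel against each other (they do). One small correction to the base of your secondary induction: in the step $2\to 3$ of the case $\kappa=\delta=0$, $\varepsilon=+1$, the operator \eqref{eq:Plus1} kills $\bu^2_\text{min}=\bu^2_0$ outright (both terms carry the factor $j$), and $\bu^3_\text{min}=\bu^3_2$ is instead reached by applying $R^{2,1}_{\mu,2}$ to $\bu^2_2$, a one-term raising only because $\bu^{3,+}_0=0$; this is harmless since your primary hypothesis already gives $E^2\equiv 0$ on all of $\mathcal{V}^2$, but the claim should read ``some vector of $\mathcal{V}^d$ raises to a nonzero multiple of $\bu^{d+a}_\text{min}$'' rather than ``$\bu^d_\text{min}$ does.''
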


\subsubsection*{Remarks}
\begin{enumerate}
\item By construction $\dim \mathcal{V}^{d_0}=1$, so the assumption that \eqref{eq:FormONB} hold for $d=d_0$ can be reduced to just
\[ \innerprod{\bu^{d_0}_\text{min}, \bu^{d_0}_\text{min}} = \piecewise{1 & \If \kappa=\delta=0, \varepsilon = +1, \\ \tfrac{1}{2} & \Otherwise.} \]

\item The projection assumption $\innerprod{u,v}=\innerprod{u, \proj_{\mathcal{V}^d} v}$ is necessary since the $Y^a_{-\wbar{\mu}}$ operators do not respect the spaces $\mathcal{V}^d$, even though the $Y^a_\mu$ do.
In practice, this assumption is met since the irksome rows of the incomplete Whittaker function $W^d(g,-\wbar{\mu},\psi)$ on the right-hand side of our inner product on Maass forms will be zero.

\item The actual sequence of sesquilinear forms we will use is given by the left-hand side of \eqref{eq:SesquiActual}; that is,
\begin{align}
\label{eq:ActualInnerProdDef}
	\innerprod{v,v'} = \int_{\Gamma\backslash G} (v \Phi^d(g))\wbar{\trans{\paren{v'\wtilde{\Phi}^d(g)}}}dg, \qquad v\in\mathcal{V}^d, v'\in\mathcal{V}^d_{0,\chi},
\end{align}
where $\Phi^d$ and $\wtilde{\Phi}^d$ are constructed from the Fourier expansion of a single minimal-weight form as in \eqref{eq:PhiFourierWhitt} and the comment that follows.
\end{enumerate}

\begin{proof}[Proof of propositions \ref{prop:GoingUpPowFun} and \ref{prop:GoingUpInnerProd}]
With respect to the sesquilinear forms of proposition \ref{prop:GoingUpInnerProd}, the adjoints of the operators $R^{d,a}_{\mu,j}$ act on the particular vectors $\bu^{d+a}_j$ as
\begin{align}
\label{eq:AdjPlus1}
	& \wbar{\what{R}^{d,1}_{\mu,j} \bu^{d+1}_j} = \\
	& -8\sqrt{3(d+2-j)(d-1+j)(d+j)(d+1+j)}(\mu_1-\mu_2-1+j) \bu^{d}_{j-2} \nonumber \\
	& -8j\sqrt{3(d+1-j)(d+1+j)}\bigl((d-j-2)(j+1+3\mu_3) \nonumber \\
	& \qquad -2(\mu_1-\mu_3+1)(\mu_2-\mu_3+1)+4(j+1)\bigr) \bu^{d}_{j} \nonumber \\
	& +8(j+1)\sqrt{3(d-1-j)(d-j)(d+1-j)(d+2+j)}(\mu_1-\mu_2-1-j) \bu^{d}_{j+2}, \nonumber
\end{align}
\begin{align}
\label{eq:AdjPlus2}
	& \wbar{\what{R}^{d,2}_{\mu,j} \bu^{d+2}_j} = \\
	& -4\sqrt{6(d-1+j)(d+j)(d+1+j)(d+2+j)}(\mu_1-\mu_2-1+j) \bu^{d}_{j-2} \nonumber \\
	& -4\sqrt{6(d+1-j)(d+2-j)(d+1+j)(d+2+j)}\bigl((2d-j)(d+2-3\mu_3) \nonumber \\
	& \qquad +2(\mu_1-\mu_3+1)(\mu_2-\mu_3+1)-j(d+1-j)\bigr) \bu^{d}_{j} \nonumber \\
	& -4(1+j)\sqrt{6(d-1-j)(d-j)(d+1-j)(d+2-j)}(\mu_1-\mu_2-1-j) \bu^{d}_{j+2} \nonumber
\end{align}
Note the coefficient on $\bu^d_j$ matches those on $\bu^{d+a}_j$ in \eqref{eq:Plus1} and \eqref{eq:Plus2}.
It follows that if $\innerprod{\bu^{d}_{j}, \bu^{d}_{j+2}} = 0$ (which will be the induction assumption), then
\begin{align}
\label{eq:ONBPlus1}
	& j\sqrt{(d+1-j)(d+1+j)}\bigl((d-j-2)(j+1+3\mu_3)-2(\mu_1-\mu_3+1)(\mu_2-\mu_3+1) \\
	& \qquad+4(j+1)\bigr) \paren{\innerprod{\bu^{d+1}_{j},\bu^{d+1}_{j}}- \innerprod{\bu^d_j, \bu^{d}_{j}}}, \nonumber \\
	&= \sqrt{(d+2-j)(d+j)(d-1+j)(d+1+j)} (\mu_1-\mu_2-1+j) \innerprod{\bu^d_j, \bu^{d}_{j-2}} \nonumber \\
	& \qquad +j\sqrt{(d+2-j)(d+j)(d+1-j)(d+3-j)}(\mu_1-\mu_2+1-j) \innerprod{\bu^{d+1}_{j-2}, \bu^{d+1}_{j}} \nonumber
\end{align}
\begin{align}
\label{eq:ONBPlus2}
	& \sqrt{(d+1-j)(d+2-j)(d+1+j)(d+2+j)}\bigl((2d-j)(d+2-3\mu_3) \\
	& \qquad +2(\mu_1-\mu_3+1)(\mu_2-\mu_3+1)-j(d+1-j)\bigr) \paren{\innerprod{\bu^{d+2}_{j}, \bu^{d+2}_{j}}-\innerprod{\bu^{d}_{j}, \bu^{d}_{j}}} \nonumber \\
	&= \sqrt{(d-1+j)(d+j)(d+1+j)(d+2+j)}(\mu_1-\mu_2-1+j) \innerprod{\bu^{d}_{j}, \bu^{d}_{j-2}} \nonumber \\
	& -j\sqrt{(d+1-j)(d+2-j)(d+3-j)(d+4-j)}(\mu_1-\mu_2+1-j) \innerprod{\bu^{d+2}_{j-2}, \bu^{d+2}_{j}}. \nonumber
\end{align}

We wish to show for $d \ge d_0$ that $\mathcal{V}^d$ is in the image of the raising operators, i.e.
\[ \mathcal{V}^d = \C[Y^0_\mu] \piecewise{\paren{\C Y^1_\mu \mathcal{V}^{d-1} + \C Y^2_\mu\mathcal{V}^{d-2}} & \If d \ge d_0+2, \\ Y^1_\mu \mathcal{V}^{d-1} & d=d_0+1, \\ \bu^d_\text{min} & d=d_0,} \]
and that \eqref{eq:FormONB} continues to hold in the higher weight.
We prove this in two steps:
First, we show that $\bu^d_\text{min}$ is in the image of the raising operators and \eqref{eq:FormONB} holds for $\bu^{d}_{2i+\delta} = \bu^{d}_{2i'+\delta} = \bu^d_\text{min}$ for all $d \ge d_0$ (the base case of the double induction).
Second, we extend this to all of $\mathcal{V}^d$, for all $d \ge d_0$ (the induction step of the double induction).

The first step, itself an induction argument on $d$, proceeds by cases.
Note that the base case of the induction is the statement that $\bu^{d_0}_\text{min}$ itself is in the image of $\bu^{d_0}_\text{min}$ under the raising operators, i.e. elements of $\C[Y^0_\mu,Y^1_\mu,Y^2_\mu]$, but this is obvious.
The cases are
\begin{align*}
	\begin{array}{r|c|c|c|l}
	\text{Case}&\text{Conditions}&\text{Step}\\
	\hline
	\text{Ia}&\kappa > 1&d_0 \to d_0+1\\
	\text{Ib}&\kappa > 1&d \to d+2\\
	\text{IIa}&\kappa=\delta=0, \varepsilon=+1&0 \to 2\\
	\text{IIb}&\kappa=\delta=0, \varepsilon=+1&2 \to 3\\
	\text{IIc}&\kappa=\delta=0, \varepsilon=+1&d \to d+2\\
	\text{IIIa}&\Max{\kappa,\delta}=1, \varepsilon=(-1)^d&d \to d+1\\
	\text{IIIb}&\Max{\kappa,\delta}=1, \varepsilon=-(-1)^d&d \to d+1
	\end{array}
\end{align*}

For the cases incrementing $d$ by 1, we use the raising operator \eqref{eq:Plus1} and apply \eqref{eq:ONBPlus1} for the orthonormality.
For the cases incrementing $d$ by 2, we use the raising operator \eqref{eq:Plus2} and apply \eqref{eq:ONBPlus2} for the orthonormality.
The need for the separation of cases are:
firstly, when $j_\text{min}=1$, we have $\bu^d_{j_\text{min}-2} = \varepsilon (-1)^d \bu^d_{j_\text{min}}$, so the form of the raising operator changes (slightly) with the parity of $d$; secondly, if $j_\text{min}=2$ for some $d$, then $j_\text{min}=0$ for $d+1$, and the raising operator cannot lift from $j=2$ at $d$ to $j=0$ at $d+1$ (but lifting from $j=2$ at $d$ to $j=2$ at $d+2$ is fine); lastly $\dim \mathcal{V}^1=0$ when $\kappa=\delta=0$,$\varepsilon=+1$.

We now prove case IIIa, the others are similar:
At $j=1$, \eqref{eq:Plus1} becomes
\begin{align*}
	R^{d,1}_{\mu,1} \bu^d_1 =& 16\sqrt{3d(d+2)}(\mu_1-\mu_3+d)(\mu_2-\mu_3-1) \bu^{d+1}_{1}.
\end{align*}
For the base case of the orthonormality condition \eqref{eq:FormONB}, we apply \eqref{eq:ONBPlus1} with $j=1$ and $\bu^{d+a}_{-1}=(-1)^a \bu^{d+a}_1$ (by assumption) and put everything on the same side, giving
\begin{align*}
	& 2\sqrt{d(d+2)}(\mu_1-\mu_3+d)(\mu_2-\mu_3-1) \paren{\innerprod{\bu^{d+1}_1,\bu^{d+1}_1}- \innerprod{\bu^d_1, \bu^{d}_1}} = 0.
\end{align*}
In both equations, we know $\sqrt{d(d+2)} (\mu_1-\mu_3+d)(\mu_2-\mu_3-1) \ne 0$, and this completes the induction step in this case.

We now proceed to the second step.
Again, the base case that $\mathcal{V}^{d_0} = \C \bu^{d_0}_\text{min}$ is in the image of $\bu^{d_0}_\text{min}$ under the raising operators and \eqref{eq:FormONB} holds at $d=d_0$ is obvious.
As mentioned above, $\mathcal{V}^d$ can be generated from $\bu^d_\text{min}$ by repeatedly applying \eqref{eq:ThreeTermRecursion}, though there is a little extra work going from $j=j_\text{min}$ to $j+2$ in the case where $j_\text{min} = 0$ since then $\bu^d_{-2} = \bu^d_2$ (note that $\bu^d_{-2} = -\bu^d_2$ cannot occur for $\delta=j_\text{min}=0$).
Even in that case, it is easy to see that the coefficient of $\bu^d_{j+2}$ in \eqref{eq:ThreeTermRecursion} is non-zero, and this is enough to conclude that $\mathcal{V}^d = \C[Y^0_\mu]\bu^d_\text{min}$ and hence is in the image of $\bu^{d_0}_\text{min}$ under the raising operators (by the conclusion of the first step).

Now we show \eqref{eq:FormONB} holds for $d > d_0$, assuming it holds for for all $d_0 \le d' < d$ and that it holds at $2i+\delta=2i'+\delta=j_\text{min}$.
By symmetry, it suffices to assume $i \ge i'$, since for $i' \ge i+1$ all of the relevant adjoint operators will respect the spaces $\mathcal{V}^{d+a}$ (which was the only asymmetry in the hypotheses on the sesquilinear forms).

For convenience, let $j=2i+\delta$ and $j'=2i'+\delta$.
We now proceed by induction on $j$.
The base cases are $j\in\set{0,1,\kappa}$, which necessarily imply $j=j'=j_\text{min}$, and this case is implied by the induction assumption above.
Note that $j=\kappa+1$ implies $\kappa=0$ so that again $j=1$.

Assume $j \ge \Max{3,\kappa+2}$.
First assume $j > j'$, then by \eqref{eq:FourTermInduction1}, we have
\begin{equation}
\begin{aligned}
	&\sqrt{2d(d+1)(d+2)(2d+1)} \innerprod{Y^1_\mu \bu^{d}_{j-2},\bu^{d+1}_{j'}} \\
	&=\sqrt{(d+2-j)(d-1+j)(d+j)(d+1+j)} (\mu_1-\mu_2-1+j) \innerprod{\bu^{d+1}_{j},\bu^{d+1}_{j'}} \\
	&\qquad-\sqrt{(d+3-j)(d+4-j)(d+5-j)(d+j-2)} (\mu_1-\mu_2+3-j) \tfrac{1}{2}\delta_{j-4=j'} \\
	&\qquad-2(j-2)\sqrt{(d+3-j)(d-1+j)} (3\mu_3-d-1) \tfrac{1}{2}\delta_{j-2=j'}.
\end{aligned}
\end{equation}
From the definition \eqref{eq:FormDef} and using \eqref{eq:AdjYmuDef} with our induction assumption and \eqref{eq:FourTermReduction1}, this may also be written
\begin{equation}
\begin{aligned}
	&\sqrt{2d(d+1)(d+2)(2d+3)} \innerprod{\bu^{d}_{j-2}, Y^{-1}_{-\wbar{\mu}} \bu^{d+1}_{j'}} \\
	&=-\sqrt{(d+2-j')(d-1+j')(d+j')(d+1+j')} (-\mu_1+\mu_2+1-j') \tfrac{1}{2}\delta_{j-2=j'-2} \\
	&\qquad+2j'\sqrt{(d+1-j')(d+1+j')} (-3\mu_3+d+1) \tfrac{1}{2}\delta_{j-2=j'} \\
	&\qquad+\sqrt{(d-1-j')(d-j')(d+1-j')(d+2+j')} (-\mu_1+\mu_2+1+j') \tfrac{1}{2}\delta_{j-2=j'+2},
\end{aligned}
\end{equation}
from which it follows
\[ \innerprod{\bu^{d+1}_{j},\bu^{d+1}_{j'}} = \tfrac{1}{2}\delta_{j=j'}. \]
Now having shown the case $j=j'+2$ (and by symmetry $j=j'-2$), the proof applies verbatim at $j=j'$.

The remaining case is $j=2$, $j'\in\set{0,2}$, with $\kappa=0$, $\varepsilon=(-1)^d$ and $d \ge d_0+2$, and this has a proof identical to the previous using \eqref{eq:FourTermInduction2}.
\end{proof}

\section{The structure of the cusp forms}
\label{sect:CuspStructure}

\subsection{The orthogonal and harmonic descriptions of minimal $K$-types}
\label{sect:MinKProof}
We now finish the proof of proposition \ref{prop:EquivMinimalKTypes}.

First, we show condition 1 implies condition 2:
If $\phi$ is orthogonal to the raises of all lower weight forms, then some word $L$ in $Y^{-1}$ and $Y^{-2}$, i.e. $L=Y^{a_1} Y^{a_2} \cdots Y^{a_k}$ for some $k \ge 0$ and $a_i\in\set{-1,-2}$, makes $0 \ne L\phi =: \phi^{d_0}$ minimal in the sense that $Y^{-1} \phi^{d_0}=0$ and $Y^{-2} \phi^{d_0} = 0$.
But if $L \ne 1$, then
\[ 0=\innerprod{\what{L} \phi^{d_0}, \phi}=\innerprod{\phi^{d_0}, L\phi}=\innerprod{\phi^{d_0},\phi^{d_0}} \ne 0, \]
a contradiction, so $\phi$ must already satisfy $Y^{-1} \phi = 0$ and $Y^{-2} \phi = 0$.

As mentioned in the discussions following corollary \ref{cor:WhittFinalFEs} and proposition \ref{prop:GDPower}, this is already sufficient to say $\phi$ is an eigenfunction of $Y^0$, except possibly when $d=2$.
If $d=2$ and $\phi$ is not an eigenfunction of $Y^0$ -- i.e. when the Fourier coefficients of $\phi$ are multiples of $g_1 W^2(\cdot,\mu,\psi_{1,1})$ with all $\mu_i-\mu_j\ne 1$ -- then $Y^{-2} Y^0 \phi$ is not zero (recall \eqref{eq:Ym2Y0g1}), and the above argument works taking $L$ to be $Y^{-2} Y^0$.
So $\phi$ is minimal in the sense of condition 2.

Condition 1 follows from condition 2 by considering minimal-weight ancestors:
We use the decomposition $\mathcal{A}^d = \bigoplus_\mu \mathcal{A}^d_\mu$ implied by theorem \ref{thm:SpectralDecomp}.
Suppose $\phi\in\mathcal{A}^d_\mu$ is minimal in the sense of condition 2.
Now if $\phi' \in \mathcal{A}^{d-a}_{\mu'}$ for some $a\in\set{1,2}$, then by applying $Y^{-1}$ and $Y^{-2}$ (and possibly $Y^0$) to $\phi'$, we must arrive at some cusp form which satisfies condition 2, but by theorem \ref{thm:GDCusp} (and the fact that $Y^1:\mathcal{A}^0\to\mathcal{A}^1$ is the zero operator), we know $\mu \ne \mu'$, and hence $\phi$ is orthogonal to $Y^a \phi'$ as they belong to different eigenspaces of the Casimir operators.

It remains to prove the equivalence of conditons 2 and 3.
Since $\Lambda_X$ necessarily commutes with $\Delta_1$ and $\Delta_2$, we may assume the cusp forms in question are eigenfunctions of the latter two operators.
We know that such forms have spectral parameters of the form $(i t_1, i t_2,-i(t_1+t_2))$ or $(x+i t, -x+i t, -2i t)$, up to permutation.
In the first case, the $\Lambda_X$ eigenvalue is computed to be
\[ \paren{(t_1-t_2)^2+4X^2}\paren{(2t_1+t_2)^2+4X^2}\paren{(t_1+2t_2)^2+4X^2}, \]
and for $X \ne 0$, this is not zero.
In the second case, the eigenvalue is
\[ 4(X-x)(X+x)\paren{9t^2+(2X-x)^2}\paren{9t^2+(2X+x)^2}, \]
and this is zero exactly when $x=\pm X$ or $(x,t)=(\pm 2X,0)$.
For $d \ge 2$, since we have shown the cusp forms having minimal $K$-type $\WigDMat{d}$ are exactly those with spectral parameters of the form
\[ \paren{\tfrac{d-1}{2}+i t, -\tfrac{d-1}{2}+i t, -2i t} \]
and there are no cusp forms of $K$-type $\WigDMat{d}$ with spectral parameters of the form $(d-1,1-d,0)$, this gives the claim.

\subsection{The cuspidal spectral expansion}
We have some final calculations to complete the proof of theorem \ref{thm:CuspSpectralExpand}.
Consider $\Phi\in\mathcal{S}_3$ and $\Phi^d$, $\wtilde{\Phi}^d$ as in section \ref{sect:MainThms} and $\mathcal{V}^d_{\kappa,\chi}$ as in section \ref{sect:GoingUp}.
Then $\Phi \in \mathcal{A}_\mu^{d_0*}$ for some $\mu$ and $d_0$, and we determine $\chi$ and $\kappa$ from $d_0$ by \eqref{eq:chifromphi} and
\begin{align}
	\kappa =& \piecewise{d_0 & \If d_0 \ge 2, \\ 0 & \Otherwise.}
\end{align}

First, we point out that in the proof of proposition \ref{prop:GDWhitt}, we showed $v\wtilde{\Phi}^d(g) = (\proj_{\mathcal{V}^d_{\kappa,\chi}} v)\wtilde{\Phi}^d(g)$ for $v \in \mathcal{V}^d_{0,\chi}$, since all of the Whittaker functions of $v\wtilde{\Phi}^d(g)$ are zero when also $v \in (\mathcal{V}^d_{\kappa,\chi})^\perp$; in particular, the projection assumption of \eqref{eq:FormDef} is met for the inner product \eqref{eq:ActualInnerProdDef}.
Moreover, in proposition \ref{prop:GoingUpPowFun}, we showed that all vectors $v\wtilde{\Phi}^d(g)$ with $v \in \mathcal{V}^d_{\kappa,\chi}$ are obtainable by applying suitable combinations of the $Y^a$ operators to $\Phi=\bu^{d_0,\varepsilon}_\text{min} \Phi^{d_0}$ itself, and hence they are all true cusp forms by the left-translation invariance of the $Y^a$ operators.
So the rows of $\Phi^d(g)$ which do not correspond to cusp forms also do not contribute to $\Tr\paren{\Phi^d(g) \wbar{\trans{\wtilde{\Phi}^d(g')}}}$.

One might wonder about the need for $\wtilde{\Phi}$ when a similar situation does not happen for the maximal parabolic Eisenstein series.
The simple answer is that for the Eisenstein series, our choice of normalization constants effectively completes the Whittaker function under the $\mu \mapsto \mu^{w_2}$ functional equation.
(We cannot formulate a matrix-valued Whittaker function which is complete under all of the functional equations, because the $w_3$ functional equation, in particular, is not a diagonal matrix, while the $w_2$ functional equation is diagonal with distinct entries, so they cannot be simultaneously diagonalized.)

We normalize the Fourier-Whittaker coefficients as follows:
The space $\mathcal{V}^{d_0}_{\kappa,\chi}$ has dimension one, and hence the matrix $\wtilde{\Phi}^{d_0}(g)$ has exactly one distinct non-zero row (which may occur twice, due to the symmetry of $\Sigma_\chi$), call it $\wtilde{\Phi}$.
If we insist that
\begin{align}
\label{eq:PhiNormalization}
	\int_{\Gamma\backslash G} \Phi(g) \wbar{\trans{\wtilde{\Phi}(g)}} dg = \piecewise{1 & \If \kappa=\delta=0, \\ \frac{1}{2} & \Otherwise,}
\end{align}
then proposition \ref{prop:GoingUpInnerProd} implies that the rows of $\Phi^d$ and $\wtilde{\Phi}^d$ have the desired orthonormality.
That is,
\begin{align}
\label{eq:SesquiActual}
	\int_{\Gamma\backslash G} (v \Phi^d(g)) \wbar{\trans{\paren{v'\wtilde{\Phi}^d(g)}}} dg = v\,\wbar{\trans{v'}},
\end{align}
for $v,v' \in \mathcal{V}^d_{\kappa,\chi}$.
Note that the rows of $\Phi^d(g)$ which apparently have norm $\frac{1}{\sqrt{2}}$ actually occur twice in $\Phi^d(g)$, so there is no discrepancy.
Since $-\wbar{\mu}$ is either $\mu$ or $\mu^{w_2}$ and the $w_2$ functional equation of the Whittaker function acts by a non-zero scalar on the minimal Whittaker function, it is always possible to arrange \eqref{eq:PhiNormalization}.

The final step is to convert back to scalar-valued forms.
We notice that
\begin{align*}
	& (2d+1) \int_{\Gamma\backslash G} \Phi^d_{i,j}(g) \wbar{\wtilde{\Phi}^d_{k,\ell}(g)} dg \\
	&= (2d+1) \sum_{m,n} \int_{\Gamma\backslash G/K} \Phi^d_{i,m}(z) \wbar{\wtilde{\Phi}^d_{n,k}(z)} \int_K \WigD{d}{m}{j}(k) \wbar{\WigD{d}{n}{\ell}(k)} dk \, dz \\
	&= \delta_{j=\ell} \int_{\Gamma\backslash G} \Phi^d_i(g) \wbar{\trans{\wtilde{\Phi}^d_k(g)}} dg,
\end{align*}
and this produces the factor $2d+1$ in theorem \ref{thm:CuspSpectralExpand}.

\appendix

\section{Shalika's multiplicity one theorem}
\label{app:MultOne}
We now show that theorem \ref{thm:ext}, part 3 follows from Shalika's local multiplicity one theorem \cite[theorem 3.1]{Shal01}; for this we follow the notation of Knapp \cite{Knapp}.
Assume that $n_1 n_2 \ne 0$, since otherwise $c_n=0$ works.
We note that $\phi$ generates a unitary, admissible representation (finite dimensionality of the $K$-types is given by part 1 of the current theorem) of $G$ via right translation
\[ \mathcal{R}_\phi := \Span \set{\phi(\cdot g) \setdiv g\in G} \subset L^2(\Gamma\backslash G). \]
We have assumed that $\mathcal{R}_\phi$ is irreducible; in general, $\mathcal{A}^d_\mu$ is a finite (by part 1 of the theorem) span of elements of irreducible representations, so that the $c_n$ and $f$ of the theorem will be replaced by linear combinations of some finite set $f_1, \ldots, f_k$.

This representation is infinitesimally equivalent\footnote{That is, isomorphic by a bounded, unitary intertwining operator that commutes with the action of the Lie algebra $\frak{g}_\C$, see \cite[corollary 9.2]{Knapp}.} to a subrepresentation \cite[theorem 8.37]{Knapp} of a principal series representation
\[ \mathcal{P}_{\mu,\chi} = \set{f:G\to\C\setdiv f(xyvk)=p_{\rho+\mu}(y) \chi(v) f(k), \int_K \abs{f(k)}^2 dk < \infty} \]
for some $\chi$ (a character of the diagonal, orthogonal matrices $V$); denote the isomorphism by $\mathcal{L}:\mathcal{R}_\phi \to \mathcal{P}_{\mu,\chi}$.

Denote the subspace of smooth functions in $\mathcal{R}_\phi$, resp. $\mathcal{P}_{\mu,\chi}$, as $\mathcal{R}_\phi^\infty$, resp. $\mathcal{P}_{\mu,\chi}^\infty$.
We give these spaces the Fr\'echet topology generated by the semi-norms $\norm{f}^2_{R,X} := \int_{\Gamma\backslash G} \abs{(X f)(g)}^2 dg$, $f \in \mathcal{R}_\phi^\infty$ and $\norm{f}^2_{P,X} := \int_{K} \abs{(X f)(k)}^2 dk$, $f \in \mathcal{P}_{\mu,\chi}^\infty$ for all $X \in \frak{g}_\C$.
The property of infinitesimal equivalence means that the isomorphism $\mathcal{L}$ preserves the $(\frak{g},K)$-module structure of the admissible representations; in particular, the action of the Lie algebra $\frak{g}_\C$, and hence the generated Fr\'echet topology is preserved.
Thus $\mathcal{L}$ restricts to $\mathcal{L}^\infty:\mathcal{R}_\phi^\infty\to\mathcal{P}_{\mu,\chi}^\infty$.

We have the Whittaker model
\[ \mathcal{W}_{n,\mu} = \set{f \in C^\infty(G)\setdiv f(xg) = \psi_n(x) f(g), \Delta_i f = \lambda_i(\mu) f, i=1,2}, \]
which is once again given the Fr\'echet topology generated by the action of the Lie algebra.
The operator $\mathcal{F}_n:\mathcal{R}_\phi^\infty \to \mathcal{W}_{n,\mu}$, which takes a cusp form to its $n$-th Fourier coefficient,
\[ (\mathcal{F}_n f)(g) := \int_{U(\Z)\backslash U(\R)} f(ug) \wbar{\psi_n(u)} du, \qquad f \in \mathcal{R}_\phi^\infty, \]
has image in the Whittaker model, as does the Jacquet integral,
\[ (\mathcal{J}_n f)(g) := \int_{U(\R)} f(w_l u g) \wbar{\psi_n(u)} du, \qquad f \in \mathcal{P}_{\mu,\chi}^\infty, \]
viewed as an operator $\mathcal{J}_n:\mathcal{P}_{\mu,\chi}^\infty \to \mathcal{W}_{n,\mu}$.
Both operators commute with the action of the Lie algebra, hence they are continuous with respect to the Fr\'echet toplology.

The convergence and analytic continuation of the Jacquet integral was originally studied in \cite{Jac01}, but the necessary extension to $\Re(\mu_i)=\Re(\mu_j)$, $i \ne j$ can instead be deduced from propositions I.3.1 and I.3.3.
Indeed, the proposition I.3.1 gives the analytic continuation, and the functional equations, proposition I.3.3, plus the usual Phragm\'en-Lindel\"of argument shows the entries of $W^d(g,\mu,\psi_n)$ are polynomially bounded in $d$.
Then on $\Re(\mu_i)=\Re(\mu_j)$, we may define $\mathcal{J}_n f$ by the expansion (I.2.10) of $f$ into Wigner $\WigDName$-matrices,
\[ (\mathcal{J}_n f)(g) := \sum_{d \ge 0} (2d+1) \Tr\paren{\int_K f(k) \wbar{\trans{\WigDMat{d}(k)}} dk \, W^d(g,\mu,\psi_n)}, \]
since the entries of $\int_K f(k) \wbar{\trans{\WigDMat{d}(k)}} dk$ have super-polynomial decay in $d$.
This is the natural, continuous extension which gives $(\mathcal{J}_n f)(g) = W^d_{m',m}(g,\mu,\psi_n)$ for $f(xyk) = p_{\rho+\mu}(y) \WigD{d}{m'}{m}(k)$.

As an aside, we note that the extension of Jacquet integral to $\Re(\mu_i)=\Re(\mu_j)$ may instead be accomplished by interpretting the integral in the Riemannian sense $\int_{U(\R)}=\lim_{R\to\infty} \int_{[-R,R]^2\times\R}$, by integration by parts; this is done very explicitly for $f=1$ in the analysis of $X_3$ in \cite[section 4.3]{Me01}, but easily extends to non-trivial $f$.

Shalika's local multiplicity one theorem \cite[theorem 3.1]{Shal01} states the operators $\mathcal{F}_n$ and $\mathcal{J}_n \circ \mathcal{L}^\infty$ are identical, up to a constant.
In particular, $\mathcal{F}_n(\phi) = c_n \mathcal{J}_n(\mathcal{L}^\infty(\phi))$ for some constant $c_n\in\C$, and we take the $f$ in the theorem to be the restriction to $K$ of $\mathcal{L}^\infty(\phi)$.

In terms of Shalika's notation, we notice that Shalika's $\mathcal{D}(\Pi)$ is a dense subspace of the smooth vectors $\mathcal{R}_\phi^\infty$, the continuity condition on elements of Shalika's $\mathcal{D}'(\Pi)$ is with respect to the Fr\'echet topology described above, and the images of both $\mathcal{F}_n$ and $\mathcal{J}_n \circ \mathcal{L}^\infty$ trivially lie in Shalika's $\mathcal{D}'_{\psi_n}(\Pi) \subset \mathcal{W}_{n,\mu}$.

\section{Computing the Casimir operators}
\label{app:CasCompute}
We now prove lemma \ref{lem:CasCompute}.
The first step is to write the $E_{i,j}$ basis in terms of the $X_j,K_j$ basis:
As operators on smooth functions of $G$ (that is, we drop the identity matrix which acts as the zero operator),
\begin{align}
\label{eq:EijToXK}
	4E_{i,j} = \sum_{\abs{k} \le 2} \wbar{[X_k]_{i,j}} X_k+\sum_{\abs{k} \le 1} \wbar{[K_{-k}]_{i,j}} K_k,
\end{align}
where, as in \eqref{eq:XKKilling}, $[X_k]_{i,j}$ means the entry at index $i,j$ of the matrix $X_k$ and similarly for $[K_k]_{i,j}$.

By the symmetries of the matrices $X_k$ and $K_k$, the coefficients satisfy
\begin{align*}
	\wbar{[X_k]_{i,j}} =& (-1)^k [X_{-k}]_{i,j}, & \wbar{[K_k]_{i,j}} =& (-1)^k [K_{-k}]_{i,j}, \\
	[X_k]_{j,i} =& [X_k]_{i,j} & [K_k]_{j,i} =& -[K_k]_{i,j}.
\end{align*}
Inserting these expressions into \eqref{eq:CasimirDef} gives
\begin{align*}
	-32\Delta_1 =& \sum_{i,j} \paren{\sum_{\abs{k_1} \le 2} (-1)^{k_1} [X_{-k_1}]_{i,j} X_{k_1}+\sum_{\abs{k_1} \le 1} (-1)^{k_1} [K_{-k_1}]_{i,j} K_{k_1}} \\
	& \qquad \circ \paren{\sum_{\abs{k_2} \le 2} \wbar{[X_{k_2}]_{i,j}} X_{k_2}-\sum_{\abs{k_2} \le 1} \wbar{[K_{k_2}]_{i,j}} K_{k_2}}.
\end{align*}

Now we interchange the sums and use the orthonormality \eqref{eq:XKKilling}, so we have
\begin{align}
\label{eq:Delta1XK}
	-8 \Delta_1 =& \sum_{\abs{j}\le 2} (-1)^j X_j \circ X_{-j}+2\Delta_K.
\end{align}

We reduce to the $\wtilde{Z}_j$ operators by lemma \ref{eq:XjOp}, so that
\begin{align}
\label{eq:Delta1XK}
	-8 \Delta_1-2\Delta_K =& \sum_{\abs{\ell_1},\abs{\ell_2}\le 2} \paren{\sum_{\abs{j}\le 2} (-1)^j \WigD{2}{\ell_1}{j}(k) \WigD{2}{\ell_2}{-j}(k)} \wtilde{Z}_{\ell_1} \circ \wtilde{Z}_{\ell_2} \\
	& + \sum_{\abs{\ell_1},\abs{\ell_2}\le 2} \paren{\sum_{\abs{j}\le 2} (-1)^j \WigD{2}{\ell_1}{j}(k) \wtilde{Z}_{\ell_1} \WigD{2}{\ell_2}{-j}(k)} \wtilde{Z}_{\ell_2}. \nonumber
\end{align}

The symmetry \eqref{eq:WigdSymms} implies $(-1)^{\ell+j} \WigD{2}{\ell}{-j}(k) = \wbar{\WigD{2}{-\ell}{j}(k)}$ and so the orthogonality of the rows of $\WigDMat{2}(k)$ gives
\begin{align}
\label{eq:WDSymOrtho}
	\sum_{\abs{j}\le 2} (-1)^j \WigD{2}{\ell_1}{j}(k) \WigD{2}{\ell_2}{-j}(k) =& (-1)^{\ell_2} \delta_{\ell_1=-\ell_2}.
\end{align}
Also, we may compute from \eqref{eq:tildeZDef} and \eqref{eq:KLeftactWD} that
\begin{align}
\label{eq:tildeZactWDpm2}
	\wtilde{Z}_{\pm2} \WigD{d}{\ell}{j}(k) =& \pm\ell \WigD{d}{\ell}{j}(k), \\
\label{eq:tildeZactWDpm1}
	\wtilde{Z}_{\pm1} \WigD{d}{\ell}{j}(k) =& -\sqrt{d(d+1)-\ell(\ell\mp1)} \WigD{d}{\ell\mp1}{j}(k), \\
\label{eq:tildeZactWD0}
	\wtilde{Z}_0 \WigD{d}{\ell}{j}(k) =& 0.
\end{align}
Applying these two facts to \eqref{eq:Delta1XK}, we have
\begin{align*}
	-8 \Delta_1-2\Delta_K =& \sum_{\abs{\ell}\le 2} (-1)^\ell \wtilde{Z}_{\ell} \circ \wtilde{Z}_{-\ell} -2 \wtilde{Z}_{-2}+2\sqrt{6} \wtilde{Z}_0 -2 \wtilde{Z}_2.
\end{align*}

From \eqref{eq:tildeZDef}, \eqref{eq:DeltaKLeft} and the commutativity of all $\KLeft{j}$ and $Z_j$, we have
\begin{equation}
\begin{aligned}
\label{eq:Delta1ZK}
	-8 \Delta_1 =& \sum_{\ell=-2}^2 (-1)^\ell Z_{\ell} \circ Z_{-\ell}-2Z_2+2\sqrt{6}Z_0-2 Z_{-2} \\
	& +\sqrt{2}i \KLeft{0} \circ \paren{Z_2-Z_{-2}}+2\KLeft{1} \circ Z_{-1}+2\KLeft{-1} \circ Z_1,
\end{aligned}
\end{equation}
Since the operators $\KLeft{j}$ are zero on spherical functions, we see that
\begin{align}
	-8 \Delta_1^\circ =&\sum_{\ell=-2}^2 (-1)^\ell Z_{\ell} \circ Z_{-\ell}-2Z_2-2 Z_{-2}+2\sqrt{6}Z_0,
\end{align}
and \eqref{eq:FullDelta1} follows.

The degree three operator is somewhat more complicated, so we increase the formalism a little:
We collect the $X_\ell$ and $K_\ell$ operators and the $\wtilde{Z}_\ell$ and $\KLeft{\ell}$ operators by defining
\[ \mathcal{X}^d_\ell = \piecewise{X_\ell & \If d=2,\\ K_\ell & \If d=1,} \qquad \mathcal{Z}^d_\ell = \piecewise{\wtilde{Z}_\ell & \If d=2,\\ \KLeft{\ell} & \If d=1.} \]
In the same manner as we derived \eqref{eq:SimpkXjkinv}, we have
\begin{align}
\label{eq:SimpkKjkinv}
	k K_j k^{-1} = \sum_{\abs{\ell}\le 1} i^{j-\ell} \WigD{1}{\ell}{j}(k) K_j, \qquad k\in K,
\end{align}
which follows from
\begin{align}
	k(\alpha,0,0) K_j k(-\alpha,0,0) =& e^{-ij\alpha} K_j, & w_3 K_j w_3^{-1} =& \sum_{\abs{\ell}\le 1} i^{j-\ell} \WigD{1}{\ell}{w_3}(k) K_j,
\end{align}
and this may be checked using
\[ \WigDMat{1}(w_3)=\frac{1}{2}\Matrix{-1&-i\sqrt{2}&1\\i\sqrt{2}&0&i\sqrt{2}\\1&-i\sqrt{2}&-1}. \]
Then the trick \eqref{eq:LieAlgTrick} implies
\begin{align}
\label{eq:KtoKLeft}
	K_j =& \sum_{\abs{\ell}\le 1} i^{j-\ell} \WigD{1}{\ell}{j}(k) \KLeft{j},
\end{align}
as differential operators in the Iwasawa coordinates.
Collectively, we may now write
\begin{align}
\label{eq:mcXtomcZ}
	\mathcal{X}^d_j = \sum_{\abs{\ell}\le d} i^{d^2(j-\ell)} \mathcal{Z}^d_\ell.
\end{align}
(The factor $d^2$ here should not be confused with the Wigner $\WigdName$-polynomial.)

Now the orthonormality relations \eqref{eq:XKKilling} are replaced with properties of Clebsch-Gordan coefficients.
The identities are best expressed in terms of the Wigner three-$j$ symbols (recall \eqref{eq:CGtoWig3j}), so we define
\[ \frak{D}^{d_1,d_2,d_3}_{\ell_1,\ell_2,\ell_3} := \delta_{\ell_1+\ell_2+\ell_3=0} i^{d_1\ell_1+d_2\ell_2+d_3\ell_3} \Matrix{d_1&d_2&d_3\\ \ell_1&\ell_2&\ell_3} \times \piecewise{-\sqrt{105} & \If d_1+d_2+d_3=6,\\ -3\sqrt{15}i & \If d_1+d_2+d_3=5,\\ 3\sqrt{5} & \If d_1+d_2+d_3=4,\\ -3\sqrt{3}i & \If d_1+d_2+d_3=3.} \]
Then
\begin{align}
\label{eq:CastomcX}
	\mathcal{U} :=& 48 \sum_{i,j,k} E_{i,j}\circ E_{j,k} \circ E_{j,i} = \sum_{d_1,d_2,d_3\in\set{1,2}} \sum_{\substack{\abs{j_1}\le d_1,\abs{j_2}\le d_2,\abs{j_3} \le d_3\\ j_1+j_2+j_3=0}} \frak{D}^{d_1,d_2,d_3}_{j_1,j_2,j_3} \mathcal{X}^{d_1}_{j_1} \circ \mathcal{X}^{d_2}_{j_2} \circ \mathcal{X}^{d_3}_{j_3},
\end{align}
which follows from
\[ \frac{3}{4} \sum_{i,j,k} \wbar{[\mathcal{X}^{d_1}_{\ell_1}]_{i,j}} \wbar{[\mathcal{X}^{d_2}_{\ell_2}]_{j,k}} \wbar{[\mathcal{X}^{d_3}_{\ell_3}]_{k,i}} = \frac{3}{4} \wbar{\Tr\paren{\mathcal{X}^{d_2}_{\ell_2} \mathcal{X}^{d_2}_{\ell_2} \mathcal{X}^{d_3}_{\ell_3}}} = \frak{D}^{d_1,d_2,d_3}_{\ell_1,\ell_2,\ell_3}, \]
and this may be checked directly.

To \eqref{eq:CastomcX}, we apply \eqref{eq:mcXtomcZ}, and carefully interchange summations.
For $d,j,\ell\in\Z^3$, let
\[ \frak{E}^d_{j,\ell} = i^{d_1^2(j_1-\ell_1)+d_2^2(j_2-\ell_2)+d_3^2(j_3-\ell_3)} \frak{D}^{d_1,d_2,d_3}_{j_1,j_2,j_3}, \]
then the key identity is
\begin{align}
\label{eq:TripWDCGOrtho}
	\sum_{\substack{\abs{j_1}\le d_1,\abs{j_2}\le d_2,\abs{j_3} \le d_3\\ j_1+j_2+j_3=0}} \frak{E}^d_{j,\ell} \WigD{d_1}{\ell_1}{j_1}(k) \WigD{d_2}{\ell_2}{j_2}(k) \WigD{d_3}{\ell_3}{j_3}(k) =& \frak{D}^{d_1,d_2,d_3}_{\ell_1,\ell_2,\ell_3}.
\end{align}
This can be seen by the definition \eqref{eq:CGExpand} and orthogonality \cite[eq. 34.3.16]{DLMF} of the Clebsch-Gordon coefficients as follows:
Suppose, for convenience, that $d_1=d_2=d_3=2$, then
\begin{align*}
	&\sum_{j_1+j_2+j_3=0} \Matrix{2&2&2\\j_1&j_2&j_3} \WigD{2}{\ell_1}{j_1}(k) \WigD{2}{\ell_2}{j_2}(k) \WigD{2}{\ell_3}{j_3}(k) \\
	&=(-1)^{\ell_3} \sum_{d_4=0}^4 \Matrix{2&2& d_4\\ \ell_1&\ell_2&-\ell_1-\ell_2} \sum_{\abs{j_3}\le 2} (-1)^{j_3} \WigD{d_4}{\ell_1+\ell_2}{-j_3}(k) \WigD{2}{\ell_3}{j_3}(k) \\
	& \qquad \times \sum_{j_1+j_2=-j_3} (2d_4+1) \Matrix{2&2&2\\j_1&j_2&j_3} \Matrix{2&2&d_4\\j_1&j_2&j_3},
\end{align*}
and the inner sum over $j_1+j_2=-j_3$ is $\delta_{d_4=2}$ by orthogonality.
The identity follows by applying \eqref{eq:WDSymOrtho} on the $j_3$ sum.

We have $\mathcal{U} = \mathcal{U}_3+\mathcal{U}_2+\mathcal{U}_1$, where
\begin{align*}
	\mathcal{U}_3 =& \sum_{\substack{d_1,d_2,d_3\\ \ell_1,\ell_2,\ell_3}} \paren{\sum_{j_1+j_2+j_3=0} \frak{E}^d_{j,\ell} \WigD{d_1}{\ell_1}{j_1}(k) \WigD{d_2}{\ell_2}{j_2}(k) \WigD{d_3}{\ell_3}{j_3}(k)} \mathcal{Z}^{d_1}_{\ell_1} \circ \mathcal{Z}^{d_2}_{\ell_2} \circ \mathcal{Z}^{d_3}_{\ell_3},
\end{align*}
\begin{align*}
	\mathcal{U}_2 =& \sum_{\substack{d_1,d_2,d_3\\ \ell_1,\ell_2,\ell_3}} \paren{\sum_{j_1+j_2+j_3=0} \frak{E}^d_{j,\ell} \WigD{d_1}{\ell_1}{j_1}(k) \paren{\mathcal{Z}^{d_1}_{\ell_1} \WigD{d_2}{\ell_2}{j_2}(k)} \WigD{d_3}{\ell_3}{j_3}(k)} \mathcal{Z}^{d_2}_{\ell_2} \circ \mathcal{Z}^{d_3}_{\ell_3} \\
	&+\sum_{\substack{d_1,d_2,d_3\\ \ell_1,\ell_2,\ell_3}} \paren{\sum_{j_1+j_2+j_3=0} \frak{E}^d_{j,\ell} \WigD{d_1}{\ell_1}{j_1}(k) \WigD{d_2}{\ell_2}{j_2}(k) \paren{\mathcal{Z}^{d_1}_{\ell_1}\WigD{d_3}{\ell_3}{j_3}(k)}} \mathcal{Z}^{d_2}_{\ell_2} \circ \mathcal{Z}^{d_3}_{\ell_3} \\
	&+\sum_{\substack{d_1,d_2,d_3\\ \ell_1,\ell_2,\ell_3}} \paren{\sum_{j_1+j_2+j_3=0} \frak{E}^d_{j,\ell} \WigD{d_1}{\ell_1}{j_1}(k) \WigD{d_2}{\ell_2}{j_2}(k) \paren{\mathcal{Z}^{d_2}_{\ell_2}\WigD{d_3}{\ell_3}{j_3}(k)}} \mathcal{Z}^{d_1}_{\ell_1} \circ \mathcal{Z}^{d_3}_{\ell_3},
\end{align*}
\begin{align*}
	\mathcal{U}_1 =& \sum_{\substack{d_1,d_2,d_3\\ \ell_1,\ell_2,\ell_3}} \paren{\sum_{j_1+j_2+j_3=0} \frak{E}^d_{j,\ell} \WigD{d_1}{\ell_1}{j_1}(k) \WigD{d_2}{\ell_2}{j_2}(k) \paren{\mathcal{Z}^{d_1}_{\ell_1} \circ \mathcal{Z}^{d_2}_{\ell_2}\WigD{d_3}{\ell_3}{j_3}(k)}} \mathcal{Z}^{d_3}_{\ell_3} \\
	&+\sum_{\substack{d_1,d_2,d_3\\ \ell_1,\ell_2,\ell_3}} \paren{\sum_{j_1+j_2+j_3=0} \frak{E}^d_{j,\ell} \WigD{d_1}{\ell_1}{j_1}(k) \paren{\mathcal{Z}^{d_1}_{\ell_1} \WigD{d_2}{\ell_2}{j_2}(k)} \paren{\mathcal{Z}^{d_2}_{\ell_2} \WigD{d_3}{\ell_3}{j_3}(k)}} \mathcal{Z}^{d_3}_{\ell_3}.
\end{align*}

Though it is far more involved than the degree two operator, from \eqref{eq:tildeZactWDpm2}-\eqref{eq:tildeZactWD0} and \eqref{eq:TripWDCGOrtho}, we can compute
\begin{align}
	\mathcal{U}^3 =& \sum_{\substack{d_1,d_2,d_3\\ \ell_1+\ell_2+\ell_3=0}} \frak{D}^{d_1,d_2,d_3}_{\ell_1,\ell_2,\ell_3} \mathcal{Z}^{d_1}_{\ell_1} \circ \mathcal{Z}^{d_2}_{\ell_2} \circ \mathcal{Z}^{d_3}_{\ell_3}, \\
	\mathcal{U}^2 =& \sum_{\substack{d_2,d_3\\ \ell_2,\ell_3}} \frak{F}^{d_2,d_3}_{\ell_2,\ell_3} \mathcal{Z}^{d_2}_{\ell_2} \circ \mathcal{Z}^{d_3}_{\ell_3}, \\
	\mathcal{U}^1 =& 48\sqrt{6} \wtilde{Z}_0 = 48\sqrt{6} Z_0,
\end{align}
where the coefficients matrices $\frak{F}^{d_2,d_3}$ are given by
\begin{align}
	\frak{F}^{1,1} =& \Matrix{9&0&15\\0&12&0\\15&0&9}, & \frak{F}^{1,2} =& \Matrix{0&-3&0&3&0\\0&0&0&0&0\\0&3&0&-3&0}, \\
	\frak{F}^{1,2} =& \Matrix{0&6i\sqrt{2}&0\\3&0&-15\\0&0&0\\-15&0&3\\0&-6i\sqrt{2}&0}, & \frak{F}^{2,2}=& \Matrix{0&0&2\sqrt{6}&0&0\\0&-9&0&-27\\4\sqrt{6}&0&36&0&4\sqrt{6}\\0&-27&0&9&0\\0&0&2\sqrt{6}&0&0},
\end{align}
indexing from the center.
We have now completely removed the Wigner $\WigDName$-matrices from the above expressions, and the rest is purely computational.

As before, we have
\[ 144\Delta_2^\circ = \sum_{\ell_1+\ell_2+\ell_3=0} \frak{D}^{2,2,2}_{\ell_1,\ell_2,\ell_3} Z_{\ell_1} \circ Z_{\ell_2} \circ Z_{\ell_3}+\sum_{\ell_2,\ell_3} \frak{F}^{2,2}_{\ell_2,\ell_3} Z_{\ell_2} \circ Z_{\ell_3}+48\sqrt{6} Z_0+144\Delta_1^\circ, \]
and \eqref{eq:FullDelta2} follows by using
\begin{align}
	[\KLeft{0}, \KLeft{\pm1}] = \mp \sqrt{2} i \KLeft{\pm 1}, \qquad [\KLeft{1},\KLeft{-1}]=\sqrt{2}i \KLeft{0},
\end{align}
and
\begin{align}
	[Z_{\pm2},Z_0]=&0 & [Z_{\pm1},Z_0]=&\sqrt{6}Z_{\pm1} & [Z_{-1},Z_1]=&0 \\
	[Z_{\pm2},Z_{\pm1}]=& -Z_{\pm1} & [Z_{\pm2},Z_{\mp1}]=&Z_{\mp1}-2Z_{\pm1} &[Z_{-2},Z_2]=&2Z_2-2Z_{-2}.
\end{align}

\bibliographystyle{amsplain}

\bibliography{HigherWeight}

\end{document}